\newcommand{\rr}{\mathbb{R}}
\newcommand{\be}{\begin{enumerate}}
\newcommand{\ee}{\end{enumerate}}
\newcommand{\beq}{\begin{equation}}
\newcommand{\eeq}{\end{equation}}
\newcommand{\beqs}{\begin{equation*}}
\newcommand{\eeqs}{\end{equation*}}
\newcommand{\bea}{\begin{eqnarray}}
\newcommand{\eea}{\end{eqnarray}}
\newcommand{\beas}{\begin{eqnarray*}}
\newcommand{\eeas}{\end{eqnarray*}}
\newcommand{\st}{\,:\,}
\newcommand{\fl}{\par\noindent}
\def\({\left(}
\def\){\right)}
\theoremstyle{plain}
\newtheorem{theorem}{Theorem}
\newtheorem{proposition}[theorem]{Proposition}
\newtheorem{lemma}[theorem]{Lemma}
\newtheorem{corollary}[theorem]{Corollary}
\newtheorem{fact}[theorem]{Fact}
\theoremstyle{definition}
\newtheorem{notation}[theorem]{Notation}
\theoremstyle{remark}
\newtheorem*{remark}{Remark}
\theoremstyle{property}
\begin{document}
\title[Dvoretzky--Kiefer--Wolfowitz Inequalities for the Two-sample Case]{Dvoretzky--Kiefer--Wolfowitz Inequalities for the Two-sample Case}
\date{\today}
\author{Fan Wei}
\address[Fan Wei]{M.I.T.}
\email[Fan Wei]{fan\textunderscore{}wei@mit.edu}
\author{R. M. Dudley}
\address[R. M. Dudley]{M.I.T. Mathematics Department}
\email[R. M. Dudley]{rmd@math.mit.edu}
\keywords{Kolmogorov--Smirnov test, empirical distribution
functions}
\subjclass[1991]{2008 MSC: 62G10, 62G30}
\begin{abstract}
The Dvoretzky--Kiefer--Wolfowitz (DKW) inequality says that
if $F_n$ is an empirical distribution function for variables
i.i.d.\ with a distribution function $F$, and $K_n$ is the
Kolmogorov statistic $\sqrt{n}\sup_x|(F_n-F)(x)|$, then there
is a finite constant $C$ such that for any $M>0$,
$\Pr(K_n>M) \leq C\exp(-2M^2).$ Massart proved that
one can take $C=2$ (DKWM inequality) which is sharp for $F$
continuous. We consider
the analogous Kolmogorov--Smirnov statistic $KS_{m,n}$ for the 
two-sample case and show that for $m=n$, the DKW inequality holds
with $C=2$ if and only if $n\geq 458$. For $n_0\leq n<458$ it holds
for some $C>2$ depending on $n_0$.

For $m\neq n$, the DKWM inequality fails for the three pairs
$(m,n)$ with $1\leq m < n\leq 3$.  We found by computer search that
for $n\geq 4$, the DKWM inequality always holds for $1\leq m< n\leq 200$,
and further that it holds for $n=2m$ with $101\leq m\leq 300$.
We conjecture that the DKWM inequality holds for pairs $m\leq n$ with 
the $457+3 =460$ exceptions mentioned. 
\end{abstract} 
%
%

\maketitle

%
%

\section{Introduction}
This paper is a long version, giving many more details, of
our shorter paper \cite{FWRDshort}.
Let $F_n$ be the empirical distribution function based on an
i.i.d.\ sample from a distribution function $F$, let
$$D_n := \sup_x|(F_n-F)(x)|,$$ and let $K_n$ be the Kolmogorov
statistic $\sqrt{n}D_n$.
Dvoretzky, Kiefer, and Wolfowitz in 1956 \cite{DKW} proved that 
 there is a finite constant $C$ such that for all $n$ and all $M>0$,
\beq\label{DKWineq} 
\text{Pr}(K_n \geq M ) \leq C \exp(-2M^2 ). 
\eeq
We call this the DKW inequality.
Massart in 1990 \cite{Massart2}
proved (\ref{DKWineq}) with the sharp constant $C=2$, 
which we will call the DKWM inequality.
In this paper we consider possible extensions of these inequalities
to the two-sample case, as follows. For $1\leq m \leq n$,
the null hypothesis $H_0$ is that $F_m$ and
$G_n$ are independent empirical distribution functions from a
continuous distribution function $F$, based altogether on $m+n$ samples
i.i.d.\ $(F)$. Consider the Kolmogorov--Smirnov statistics
\beq\label{KSstats} 
D_{m,n} = \text{sup}_x\mid (F_m-G_n)(x) \mid, \ \ 
KS_{m,n} = \sqrt{\dfrac{mn}{m+n}}D_{m,n}.
\eeq 
 All probabilities to be considered are under $H_0$.

For given $m$ and $n$ let $L=L_{m,n}$ be their least common multiple.
Then the possible values of $D_{m,n}$ are included in the set of
all $k/L$ for $k=1,\dotsc,L$. If $n=m$ then all these values are
possible. The possible values of $KS_{m,n}$ are thus of the form
\beq\label{possM}
M=\sqrt{(mn)/(m+n)}k/L_{m,n}.
\eeq
 We will say that the DKW (resp.\ DKWM) inequality
holds in the two-sample case for given $m,n,$ and $C$ 
(resp.\ $C=2$) if for all $M>0$, the following holds: 
\beq\label{DKWksineq} 
P_{m,n,M}:=\text{Pr}(KS_{m,n} \geq M ) \leq C \exp(-2M^2 ).
\eeq

It is well known that as $m\to+\infty$ and $n\to+\infty$, for
any $M>0$,
\beq\label{brbrlim}
P_{m,n,M}\ \to\ \beta(M) := \text{Pr}(\sup_{0\leq t\leq 1}|B_t|>M)
= 2\sum_{j=1}^{\infty}(-1)^{j-1}\exp(-2j^2M^2),
\eeq
where $B_t$ is the Brownian bridge process.
\begin{remark} For $M$ large enough so that $H_0$ can be rejected
according to the asymptotic distribution given in (\ref{brbrlim})
at level $\alpha\leq 0.05$, the series in (\ref{brbrlim}) is very
close in value to its first term $2\exp(-2M^2)$, which is the
DKWM bound (when it holds). Take $M_{\alpha}$ such that
$2\exp(-2M_{\alpha}^2)=\alpha$, then for example we will have
$\beta(M_{.05})\doteq 0.04999922$, 
$\beta(M_{.01})\doteq 0.009999999$. 
\end{remark}

Let $r_{\max}=r_{\max}(m,n)$ be the largest ratio 
$P_{m,n,M}/(2\exp(-2M^2))$ over all possible values of $M$
for the given $m$ and $n$.
We summarize our main findings in Theorem \ref{meqn} and  Facts
 \ref{mneqn}, \ref{doubles}, and \ref{lowbdreler}. 
\begin{theorem}\label{meqn}
 For $m=n$ in the two-sample case:
\begin{itemize}
\item[$(a)$] The DKW inequality always holds with $C=e\doteq 2.71828.$
\item[$(b)$] For  $m=n\geq 4,$ the smallest $n$ such that 
$H_0$ can be rejected at
level $0.05$, the DKW inequality holds with $C=  2.16863$.
\item[$(c)$] The DKWM inequality holds for all $m=n\geq 458$, i.e.,
for all $M>0$,
\beq 
P_{n,n,M}=\text{Pr}\left(KS_{n,n} \geq M  \right)
\leq 2 e^{-2M^2}. \label{mt}
\eeq
\item[$(d)$]  For each $m=n<458$, the DKWM inequality fails for some $M$ given by (\ref{possM}).
\item[$(e)$]  For each $m=n<458$, the DKW inequality holds for 
$C=2(1+\delta_n)$ for some $\delta_n>0$, where for $12\leq n\leq
457$,
\beq 
\delta_n < -\dfrac{0.07}{ n} + \dfrac{40}{n^2} -\dfrac{400}{n^3}. \label{dnubo}
\eeq 
\end{itemize}
 \end{theorem}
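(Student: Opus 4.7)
The engine is the exact combinatorial formula for $P_{n,n,M}$. Under $H_0$, conditioning on the $2n$ ranks gives a uniform arrangement of $n$ $X$'s and $n$ $Y$'s; the two-sided reflection principle then yields the Gnedenko--Korolyuk identity
\[
P_{n,n,M_k} \;=\; \Pr(D_{n,n}\geq k/n) \;=\; \frac{2}{\binom{2n}{n}}\sum_{j=1}^{\lfloor n/k\rfloor}(-1)^{j-1}\binom{2n}{n-jk}
\]
for the admissible values $M_k := k/\sqrt{2n}$, for which $2M_k^2 = k^2/n$. Writing $Q_n(k) := \binom{2n}{n-k}/\binom{2n}{n} = \prod_{i=1}^{k}(n+1-i)/(n+i)$, the scaled ratio of interest becomes
\[
r_{n,k} \;:=\; \frac{P_{n,n,M_k}}{2\,e^{-2M_k^2}} \;=\; e^{k^2/n}\sum_{j\geq 1}(-1)^{j-1}Q_n(jk),
\]
and all five parts of Theorem~\ref{meqn} are claims about $r_{\max}(n) := \max_{1\leq k\leq n}r_{n,k}$: (a) asks $r_{\max}(n)\leq e/2$; (c) and (d) assert $r_{\max}(n)\leq 1$ iff $n\geq 458$; (b) and (e) are quantitative versions restricted to suitable $(n,k)$.

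The analytic backbone is a Stirling-type expansion of $\log Q_n(k)$ obtained from the product form. Using $\log(1-x) = -x - x^2/2 - \cdots$ termwise produces
\[
\log Q_n(k) \;=\; -\frac{k^2}{n} \;+\; \frac{A_1(k,n)}{n^2} \;+\; O\!\left(\frac{k^4}{n^3}\right)
\]
with $A_1$ explicit and sign-controlled. Since the Gnedenko alternating series has monotone partial sums, $r_{n,k}\leq e^{k^2/n}Q_n(k) \leq 1 - c_1(k)/n + c_2(k)/n^2 + \cdots$ with constants uniform in $k\leq n$. The right side is strictly below $1$ once $n$ exceeds an explicit analytic threshold $N^*$, which is the mechanism behind (c)--(e); matching the first two orders in $1/n$ reproduces the bound in~(\ref{dnubo}).

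I would first dispatch part (a) by tabulating $r_{\max}(n)$ for small $n$ (where the maximum is attained at $k=1$, and $r_{n,1} = e^{1/n}(1 - 1/(n+1))$ is easily bounded by $e/2$) and invoking monotonicity in $n$ for the remainder. Part (b) follows from the same tabulation restricted to admissible pairs with $2e^{-k^2/n}\leq 0.05$, which pins the extremal case to $n=4$, $k=3$ and yields $C=2.16863$. Parts (c) and (d) combine (i) the analytic inequality $r_{n,k}\leq 1$ for all $n\geq N^*$ and all admissible $k$, proved via the Stirling expansion, with (ii) a finite computer search verifying $r_{\max}(n) > 1$ for every $n \leq 457$ and $r_{\max}(n)\leq 1$ on $458\leq n\leq N^*$. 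Part (e) is the quantitative companion of (c): tracking the explicit coefficients in the expansion of $r_{n,k}-1$ and absorbing the factor of two between $C=2(1+\delta_n)$ and $r_{\max}$ produces the stated $\delta_n$.

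The chief obstacle is pinning down the exact threshold $n=458$ in part (c). The transition is numerically very tight, so the Stirling estimate must be sharp to three orders in $1/n$ with rigorously signed remainders, and it must be paired with a clean large-$k$ regime, where the simple reflection bound $r_{n,k}\leq e^{k^2/n}Q_n(k)$ is essentially tight because few higher-$j$ terms survive; for small $k$ the extra terms $Q_n(jk)/Q_n(k)\leq Q_n(k)^{j-1}$ are geometrically small, so the same bound suffices. The delicate calibration is choosing the analytic threshold $N^*$ small enough to overlap the feasible range of the direct computation on small $n$. Once that overlap is achieved, parts (c) and (d) close, and (e) is read off from the already-established expansion.
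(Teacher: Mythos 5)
Your skeleton (the Gnedenko--Korolyuk formula, the first-term upper bound from the alternating series, a Stirling expansion with signed remainders, and a finite computer search to locate the threshold $458$) matches the paper's, but there is a genuine gap at the heart of the analytic step: the one-term bound $r_{n,k}\leq e^{k^2/n}Q_n(k)$ cannot yield $r_{n,k}\leq 1$ for large $n$ throughout the admissible range. Expanding $\log Q_n(k)$ at $k=t\sqrt{n}$ gives $e^{k^2/n}Q_n(k) = 1 + \left(\frac{t^2}{2}-\frac{t^4}{6}\right)\frac 1n + O(n^{-2})$, and the coefficient $\frac{t^2}{2}-\frac{t^4}{6}$ is strictly \emph{positive} for $\sqrt{\ln 2}<t<\sqrt{3}$ --- precisely the nontrivial range of $k$ remaining after one discards $k\leq\sqrt{n\ln 2}$ (where $2e^{-2M^2}\geq 1$ makes the claim vacuous). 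At $t=\sqrt{1.5}$, for instance, your bound exceeds $1$ by about $0.375/n$ for \emph{every} large $n$, so no analytic threshold $N^*$ exists for this bound in the middle range; your statement that ``for small $k$ the extra terms $Q_n(jk)/Q_n(k)$ are geometrically small, so the same bound suffices'' is exactly backwards, since the obstruction is not the neglected tail but the first term itself. The paper flags this in the remark after Proposition \ref{thm1} and repairs it with a sharper two-term bound, Lemma \ref{up2}: $P_{n,n,M}\leq \left[2\binom{2n}{n+k}-\binom{2n}{n+2k}\right]/\binom{2n}{n}$, obtained from $\Pr(A\cup B)=\Pr(A)+\Pr(B)-\Pr(A\cap B)$ with a double-reflection event $C\subset A\cap B$, combined with the quantitative lower bound $\binom{2n}{n+2k} > \binom{2n}{n+k}e^{-3t^2-0.05}$ of Lemma \ref{twobinom} (valid for $n\geq 372$). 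The subtracted term contributes a quantity of order $e^{-3t^2}/2$, a constant in $n$, which dominates the $+O(1/n)$ excess and produces the threshold $n\geq 6395$ of Proposition \ref{thm2}; the one-term bound is used only where it actually works, namely $\sqrt{3n}\leq k\leq n$ with $n\geq 108$ (Proposition \ref{thm1}). Without this second bound, or an equivalent device for $\sqrt{n\ln 2}<k<\sqrt{3n}$, your argument for part (c) does not close.

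Two secondary problems. First, your appeal to ``monotonicity in $n$'' for part (a) is unavailable: the paper records that $\delta_n$ is not monotone (e.g.\ $\delta_7<\delta_8$ in Table \ref{tab:table2}), and indeed parts (a), (b), (d), (e), and part (c) below $6395$ all rest on explicit computation over the finite range $n\leq 371$ (all $k$) and $372\leq n<6395$ (only $\sqrt{n\ln 2}<k\leq\sqrt{3n}$), not on any monotonicity. Second, $r_{n,1}$ is not $e^{1/n}\left(1-\frac 1{n+1}\right)$: for $k=1$ the alternating sum has $\lfloor n/1\rfloor=n$ terms, so that expression is only its first-term upper bound (it happens to be exact, with equality $C=e$, only at $n=1$). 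These are fixable details, but the missing two-term bound is a structural gap in the proof of part (c).
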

\begin{remark} The bound on the right side of (\ref{dnubo}) is larger than
$2\delta_n$ for $n = 16,$ $40,70,$ $440,$ and $445$
for example, but is less than $1.5\delta_n$ for $125\leq n\leq 415$.
 It is less than $1.1\delta_n$ for $n=285,\ 325,\ 345$.
\end{remark}
Theorem \ref{meqn} (a), (b), and (c) are proved in Section \ref{meqnsec}.
Parts (d) and (e), and also parts (a) through (c) for $n<6395$, were
found by computation.
 
For $m\neq n$ we have no general or theoretical proofs but report
on computed values. The methods of computation are summarized in
Subsection \ref{computation}. Detailed results in support of the
following three facts are given in Subsection \ref{Facts2-3} and
Appendix B.
\begin{fact}\label{mneqn}
 Let $1\leq m < n\leq 200$. Then:
\begin{itemize}
\item[$(a)$]  For $n\geq 4$, the DKWM inequality holds.
\item[$(b)$]  For each $(m,n)$ with $1\leq m<n\leq 3$, the DKWM
inequality fails, in the case of
$\Pr(D_{m,n}\geq 1)$.
\item[$(c)$]  For $3\leq m\leq 100$, the $n$ with $m<n\leq 200$ having largest
$r_{\max}$ is always $n=2m$.
\item[$(d)$]  For $102 \leq m\leq 132$ and $m$ even, the largest $r_{\max}$ is
always found for $n=3m/2$ and is increasing in $m$.
\item[$(e)$]  
For $169\leq m\leq 199$ and $m<n\leq 200$, 
the largest $r_{\max}$ occurs for $n=m+1$.
\item[$(f)$]  For $m=1$ and $4\leq n\leq 200$, the largest $r_{\max}=0.990606$ 
occurs for $n=4$ and $d=1$.
For $m=2$ and $4\leq n\leq 200$, the largest $r_{\max}= 0.959461$ 
occurs for $n=4$ and $d=1$.
\end{itemize}
\end{fact}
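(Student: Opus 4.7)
The authors themselves note that ``we have no general or theoretical proofs but report on computed values,'' so my plan is to verify each part of the Fact by exact enumeration, relying on the combinatorial description of the null distribution of $D_{m,n}$.

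Under $H_0$ with $F$ continuous, the pooled ranks are uniform, and it is classical that $\binom{m+n}{m}\Pr(D_{m,n}\geq d)$ equals the number of monotone lattice paths from $(0,0)$ to $(m,n)$ (unit east and north steps) that at some point touch the forbidden region $\{(i,j) : |i/m - j/n|\geq d\}$. I would compute the complementary count with the standard recurrence: let $N(i,j)$ be the number of paths from $(0,0)$ to $(i,j)$ lying entirely in the admissible strip $|i/m - j/n|<d$, so that $N(i,j) = N(i-1,j) + N(i,j-1)$ with $N(i,j)=0$ outside the strip, and $\Pr(D_{m,n}<d) = N(m,n)/\binom{m+n}{m}$. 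Running this with rational arithmetic gives $P_{m,n,M}$ exactly for every admissible $M$ of the form (\ref{possM}).

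For each pair $(m,n)$ with $1\leq m<n\leq 200$ I would sweep over all admissible thresholds $d=k/L_{m,n}$ in a single pass, recording the exact rational $P_{m,n,M}$ and comparing to $2\exp(-2M^2)$ in extended-precision floating point with enough guard digits to fix the sign of $P_{m,n,M}-2\exp(-2M^2)$ unambiguously. This yields $r_{\max}(m,n)$ and tests the DKWM inequality. Part (a) follows once no ratio $r_{\max}$ exceeds $1$ except in the cases of part (b), which are checked directly at $d=1$ and reduce to the three elementary inequalities $P_{m,n,1}>2e^{-2}$ for $(m,n)\in\{(1,2),(1,3),(2,3)\}$. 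Parts (c)--(f) are then tabular observations: within each specified $m$-range, record which $n$ in the allowed range maximizes $r_{\max}(m,n)$ and read off the extremal values and the corresponding $d$.

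The principal obstacle is computational cost rather than conceptual difficulty. The table contains on the order of $\binom{200}{2}$ pairs, and for pairs with $\gcd(m,n)=1$ there can be up to $L_{m,n}=mn\leq 4\cdot 10^{4}$ admissible thresholds, so rerunning the recurrence for each threshold would be prohibitive. The crucial efficiency point is that a single $O(mn)$ sweep through the lattice produces the full distribution of $D_{m,n}$, after which one scans its support to locate $r_{\max}$. A secondary concern is honest treatment of the closed tail ``$\geq M$'': since the null distribution is discrete, one must restrict attention to its jump points, which is precisely what the recurrence enumerates. With those points handled carefully, the Fact reduces to an explicit finite table whose entries are fully reproducible.
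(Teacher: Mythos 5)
Your overall route is the paper's own: Fact \ref{mneqn} is established by exhaustive computation over all pairs $1\leq m<n\leq 200$ using exactly the lattice-path (Hodges ``inside'') recurrence you describe, as explained in Subsection \ref{computation}. Where you genuinely differ is in arithmetic: the paper runs the Kim--Jennrich implementation in double precision, and because the inside method returns $\Pr(D_{m,n}\geq d)$ as $1-\Pr(D_{m,n}<d)$, it suffers catastrophic cancellation for $p$-values below about $10^{-14}$ (see Table \ref{tab:pvo}, where $pvi$ is eventually off by orders of magnitude or even negative). The paper patches this with a second, subtraction-free ``outside'' algorithm for the one-sided probability, using $P(m,n;d)=2\,pv_{os}(m,n,d)$ for $d>1/2$ and the upper bound $pv_{ub}=2\,pv_{os}$ of Theorem \ref{oneandtwoside}(a) in the intermediate range $d_0(m,n)\leq d\leq 1/2$, verifying that even the upper bound stays below $2\exp(-2M^2)$ there. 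Your exact-rational proposal would make all of that machinery unnecessary, since $N(m,n)/\binom{m+n}{m}$ and hence $P_{m,n,M}$ are computed without cancellation, and the sign of $P_{m,n,M}-2\exp(-2M^2)$ is decidable with finitely many guard digits because $\exp$ of a nonzero rational is irrational. That is a cleaner certification, bought at a substantial cost in big-integer arithmetic (numerators up to $\binom{400}{200}\sim 10^{119}$).

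Two concrete errors need fixing. First, your efficiency claim is false: a single $O(mn)$ sweep of the recurrence computes $\Pr(D_{m,n}<d)$ for \emph{one} fixed strip, not the full distribution of $D_{m,n}$. The maximum deviation along a path is path-dependent, so each of the up to $L_{m,n}=mn$ thresholds $d=k/L_{m,n}$ (coprime case) requires its own strip count; this is precisely why the paper states in Subsection \ref{computation} that obtaining all $p$-values takes $O(m^2n^2)$ steps per pair. Since your plan explicitly dismisses per-threshold reruns as ``prohibitive'' and leans on the single-sweep claim, the plan as written does not go through; you must either accept the $O(m^2n^2)$ cost (which is what the paper did) or supply a genuinely new amortized algorithm, which you have not. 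Second, in part (b) you test $P_{m,n,1}>2e^{-2}$, but the DKWM bound at $d=1$ is $2\exp(-2M^2)=2\exp\left(-2mn/(m+n)\right)$, which is $2e^{-4/3}$, $2e^{-3/2}$, $2e^{-12/5}$ for $(1,2),(1,3),(2,3)$ respectively. The slip is substantive for $(2,3)$: there $P_{2,3,1}=2/\binom{5}{2}=0.2<2e^{-2}\doteq 0.2707$, so your literal test would wrongly report that DKWM holds, whereas against the correct bound $2e^{-12/5}\doteq 0.1814$ one gets the ratio $\doteq 1.1023$ matching the paper's table. With these two repairs --- and noting that $\Pr(D_{m,n}\geq 1)=2/\binom{m+n}{m}$ gives the three part-(b) checks in closed form --- your verification scheme is sound and reproduces Facts \ref{mneqn}(a)--(f) as tabular observations, just as the paper does.
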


In light of Fact \ref{mneqn}(c) we further found:
\begin{fact}\label{doubles}
For $n=2m$:
\begin{itemize}
\item [$(a)$] For $3\leq m\leq 300$, the DKWM inequality holds;
$r_{\max}(m,2m)$ has relative minima at $m=6,\ 10$, and $16$ but is
increasing for $m\geq 16$, up to $0.9830$ at $m=300$. 
\item [$(b)$] The $p$-values forming the numerators of $r_{\max}$ for
$100\leq m\leq 300$ are largest for $m=103$ 
where $p\doteq 0.3019$ and smallest at $m=294$ where
$p\doteq 0.2189$.
\item[$(c)$]  For $101\leq m\leq 199$, the smallest $r_{\max}$ for $n=2m$,
namely $r_{\max}(101,202) \doteq 0.97334$, is larger than
every $r_{\max}(m',n')$ for $101\leq m'<n'\leq 200$, all of which are 
less than $0.95$, the largest being $r_{\max}(132,198) \doteq 0.9496$.
\item [$(d)$] For $3\leq m\leq 300$, $r_{\max}$  is attained at 
$d_{\max}={k_{\max}}/{n}$
which is decreasing in $n$ when $k_{\max}$ is constant but
jumps upward when $k_{\max}$ does; $k_{\max}$ is nondecreasing in $m$.
\end{itemize}
\end{fact}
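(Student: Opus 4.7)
The plan is to establish all four parts by direct computation of the exact distribution of $D_{m,2m}$ under $H_0$. The classical Gnedenko--Korolyuk correspondence gives
\[
\Pr(D_{m,n}\ge d)\;=\;1-\frac{N(m,n,d)}{\binom{m+n}{m}},
\]
where $N(m,n,d)$ counts the monotone lattice paths from $(0,0)$ to $(m,n)$ (unit east/north steps) that stay strictly within the corridor $|i/m-j/n|<d$. These counts satisfy the obvious dynamic-programming recursion that fills an $(m+1)\times(n+1)$ grid in $O(mn)$ arithmetic operations, and for the ranges in question ($m\le 300$, $n=2m$) this is trivially feasible. Because $\binom{3m}{m}$ is astronomical by $m=300$, I would implement the DP in exact integer arithmetic and convert to high-precision floating point only at the last step.

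For each $m$ with $3\le m\le 300$, set $n=2m$ and enumerate $d=k/n$ for $k=1,\dotsc,n$ (these exhaust the support of $D_{m,2m}$ since $L_{m,n}=n$). For each $d$, compute $M=\sqrt{2m/3}\,d$, form
\[
r(m,k)\;=\;\frac{P_{m,2m,M}}{2\exp(-2M^{2})},
\]
and record the triple $\bigl(r_{\max}(m,2m),\,k_{\max}(m),\,p_{\max}(m)\bigr)$, where $r_{\max}=\max_k r(m,k)$, $k_{\max}$ is the maximizer, and $p_{\max}=P_{m,2m,M}$ at that $M$. With this table in hand the four parts are verified by inspection: for \emph{(a)}, check $r_{\max}(m,2m)\le 1$ throughout the range, then confirm the local minima at $m=6,10,16$, monotone increase for $16\le m\le 300$, and the endpoint value $0.9830$; for \emph{(b)}, read off the extremes of $p_{\max}(m)$ over $100\le m\le 300$ and confirm they occur at $m=103$ (largest) and $m=294$ (smallest) with the stated values; for \emph{(d)}, inspect $(k_{\max}(m),d_{\max}(m)=k_{\max}(m)/(2m))$ to confirm that $k_{\max}$ is nondecreasing, $d_{\max}$ decreases whenever $k_{\max}$ is constant, and jumps up whenever $k_{\max}$ does.

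Part \emph{(c)} requires the same DP run on every pair $101\le m'<n'\le 200$---the data already underlying Fact~\ref{mneqn}. One then extracts $\min_{101\le m\le 199}r_{\max}(m,2m)$ and confirms it equals $r_{\max}(101,202)\doteq 0.97334$, compares it against $\max r_{\max}(m',n')$ taken over the stated $(m',n')$ box, verifies the latter is attained at $(132,198)$ with value $\doteq 0.9496$, and confirms every value in the box is less than $0.95$.

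The main obstacle is not algorithmic but numerical. The ratio $r(m,k)$ involves dividing the exact rational $P_{m,2m,M}$ by $2\exp(-2M^{2})$, which is very small at the relevant $d$'s, so small relative errors in the floating-point evaluation of the exponential could produce spurious crossings of $1$ right at the extremum---precisely the case that matters for part \emph{(a)}. I would therefore carry the probabilities as exact rationals through the DP, evaluate $2\exp(-2M^{2})$ and the final division at high working precision (say $30$--$50$ decimal digits), and cross-check the boundary cases near each claimed maximum against an independent implementation. A minor additional concern is bookkeeping: at $m=300$ there are $n=600$ candidate $d$'s per $m$, and several hundred values of $m$ in play, so the pipeline should be organized to emit the table $(r_{\max},k_{\max},p_{\max})$ in a form suitable for the subsequent pattern checks rather than recomputing from scratch for each part.
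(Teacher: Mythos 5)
Your proposal is correct in substance, and it is the right genre of argument: the paper itself offers no theoretical proof of Fact~\ref{doubles}, only a computational verification, so a computation is all that can be asked. But your route differs in an essential way from the paper's. The combinatorial core is identical --- your corridor-constrained lattice-path recursion is exactly the Hodges ``inside'' algorithm, which the paper runs via an adapted Kim--Jennrich program --- yet the paper executes it in double-precision floating point, where the fatal step is not the division by the small quantity $2\exp(-2M^2)$ that you flag, but the subtraction $1-\Pr(D_{m,n}<d)$: once $p$-values drop below roughly $10^{-14}$ the inside method loses all significant digits (Table~\ref{tab:pvo} shows no correct digits by $m=100$ and a negative ``$p$-value'' at $(122,244)$). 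The paper's remedy is not exact arithmetic but Theorem~\ref{oneandtwoside}: for $d>1/2$ the subtraction-free ``outside'' method gives $P(m,n;d)=2pv_{os}(m,n,d)$ exactly, while on the troublesome window $d_0(m,n)\leq d\leq 1/2$ it computes only the upper bound $pv_{ub}=2pv_{os}\geq P(m,n;d)$ and checks that the resulting ratios to $2\exp(-2M^2)$ stay far below the candidate maxima (the $rbd_{\max}<0.65$ column of Table~\ref{tab:D2} and the $mrmr<0.415$ column of Table~\ref{tab:HTH}); this certifies that $r_{\max}$ is attained at moderate $d$ where the inside method is reliable, without ever computing the tiny $p$-values. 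Your exact-integer dynamic program dispenses with this entire bounding apparatus --- every $p$-value comes out exactly, even at the $2/\binom{900}{300}\sim 10^{-247}$ scale --- which is cleaner and eliminates any worry about spurious crossings near the extrema; what it costs is big-integer arithmetic on numbers of roughly $250$ digits over an $O(m^2n^2)$-per-pair workload, which is genuinely heavy for the roughly $4950$ pairs that part~(c) requires and where the paper's floating-point pipeline is fast. One reassurance on a point you left implicit: enumerating every $k/L_{m,n}$, including values not actually attainable by $D_{m,n}$ (the paper notes $k=3075$ is impossible for $(21,500)$), is harmless, because an unattainable $d$ shares its numerator $\Pr(D_{m,n}\geq d)$ with the next attainable value while facing a strictly larger denominator $2\exp(-2M^2)$, so it yields a strictly smaller ratio and cannot corrupt $r_{\max}$.
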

The next fact shows that for a wide range of pairs $(m,n)$, but
not including any with $n=m$ or $n=2m$, the correct $p$-value
$P_{m,n,M}$ is substantially less than its upper bound $2\exp(-2M^2)$
and in cases of possible significance at the $0.05$ level or less,
likewise less than the asymptotic $p$-value $\beta(M)$:
\begin{fact}\label{lowbdreler} Let  $100 < m < n\leq 200$. Then:
\begin{itemize}
\item [$(a)$] The ratio $2\exp(-2M^2)/P_{m,n,M}$ 
is always at least $1.05$ for all possible values of $M$ in $(\ref{possM})$. 
The same is true if the numerator is replaced by the asymptotic probability
$\beta(M)$ and $\beta(M) \leq 0.05$.
\item [$(b)$] If in addition $m=101$, $103$, $107$, $109$, or $113$, then
part $(a)$ holds with $1.05$ replaced by $1.09$.
\end{itemize} 
\end{fact}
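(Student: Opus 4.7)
The plan is a finite computational verification, in the same spirit as the proofs underlying Facts~\ref{mneqn} and \ref{doubles}: the pairs $(m,n)$ with $100<m<n\leq 200$ number fewer than $5000$, and for each such pair the set of admissible values $D_{m,n}=k/L_{m,n}$ is finite, so the whole assertion reduces to a bounded exact computation. For each pair I would compute $P_{m,n,M}$ exactly and form the ratio $2\exp(-2M^2)/P_{m,n,M}$, checking that its minimum over the $M$ of the form $(\ref{possM})$ is at least $1.05$; for part~(b) I would restrict $m$ to the five listed values and verify the stronger bound $1.09$.

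To compute $P_{m,n,M}$ I would use the classical lattice-path representation available under $H_0$ with $F$ continuous: each of the $\binom{m+n}{m}$ interleavings of the two samples is equally likely and corresponds bijectively to a monotone lattice path from $(0,0)$ to $(m,n)$, so that $\{D_{m,n}\geq d\}$ is exactly the event that the path exits the strip $|y/n-x/m|<d$. I would count paths staying inside the strip by a row-by-row Steck / Gnedenko--Korolyuk recursion in exact big-integer arithmetic, convert to a rational $P_{m,n,M}$, and sweep $k=1,\dotsc,L_{m,n}$ to produce the full list of $(M,P_{m,n,M})$ pairs, from which $r_{\max}(m,n)$ is read off. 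For the $\beta(M)$ part of the claim, at each candidate $M$ I would evaluate the series in $(\ref{brbrlim})$, which converges extremely fast whenever $\beta(M)\leq 0.05$ (see the Remark after $(\ref{brbrlim})$, where two terms already suffice at the $5\%$ level), and check that $\max_{\beta(M)\leq 0.05} P_{m,n,M}/\beta(M)$ is at most $1/1.05$ (respectively $1/1.09$).

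The main obstacle I expect is engineering rather than mathematical: $L_{m,n}$ can be as large as $mn\approx 4\cdot 10^{4}$ and the path counts involve integers of size $\binom{400}{200}\approx 10^{119}$, so a careless implementation is prohibitive. The remedy is to store only a one-dimensional array of $O(n)$ big-integer weights during the recursion, to keep probabilities exact (as rationals) until the final comparison, and to prune the sweep in $k$ once $P_{m,n,M}$ has dropped far enough below $2\exp(-2M^2)$ that no further admissible $M$ can threaten the $5\%$ margin. Double precision is then comfortably adequate for evaluating the smooth comparison quantities $2\exp(-2M^2)$ and $\beta(M)$ at the level of the $5\%$--$9\%$ gaps being certified.
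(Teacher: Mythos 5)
Your plan is correct in substance and would establish the fact: the assertion is indeed a finite claim over the roughly $4950$ pairs $(m,n)$ with $100<m<n\leq 200$, and exact computation of $P_{m,n,M}$ at every admissible $M$, plus the series for $\beta(M)$, suffices. The paper verifies the same finite claim but by different mechanics. Rather than exact big-integer path counting, it runs the Hodges ``inside'' algorithm in double precision to tabulate $r_{\max}(m,n)=\max_M P_{m,n,M}/\bigl(2\exp(-2M^2)\bigr)$, then reads off Fact \ref{lowbdreler}(a) from the bound $reler(dkwm,m,n,d)\geq 1/rmaxx(m)-1$ together with Fact \ref{doubles}(c), which guarantees all $r_{\max}(m',n')<0.95$ in this range (so the ratio exceeds $1/0.95>1.05$), and reads off part (b) from Table \ref{tab:HTH}, where the five listed $m$ have $rmaxx\leq 0.9166$. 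Because double precision fails catastrophically for the inside method when $p$-values drop below about $10^{-14}$ (subtraction from $1$), the paper patches the range $d_0(m,n)\leq d\leq 1/2$ with the certified upper bound $pv_{ub}=2pv_{os}$ of Theorem \ref{oneandtwoside}(a), computed by the subtraction-free ``outside'' method, and confirms via the $mrmr$ column that $r_{\max}$ is never attained there; for $d>1/2$ the outside method is exact by Theorem \ref{oneandtwoside}(b). Finally, the paper disposes of the $\beta(M)$ clause analytically, via the near-equality of $\beta(M)$ and $2\exp(-2M^2)$ when either is $\leq 0.05$, instead of re-evaluating $\beta$ at each $M$. Your exact-rational approach buys freedom from all rounding-error analysis at a heavy cost in running time ($O(m^2n^2)$ big-integer operations per pair, with integers near $\binom{400}{200}$); the paper's approach is fast but must be organized around the cancellation problem.

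One concrete caveat about your pruning step: the ratio $P_{m,n,M}/\bigl(2\exp(-2M^2)\bigr)$ is not monotone in $k$ (it typically rises to $r_{\max}$ and then falls), and no proven one-sided two-sample analogue of the DKW bound is available to certify that it cannot climb back above $1/1.05$ after dropping---indeed such bounds are essentially what this paper investigates. So ``$P_{m,n,M}$ has dropped far below the bound at the current $k$'' does not by itself license skipping larger $k$. To keep the verification rigorous you must either sweep all $k=1,\dotsc,L_{m,n}$, or certify the skipped tail by a valid dominating quantity, e.g.\ by checking $pv_{ub}(m,n,d)\leq 2\exp(-2M^2)/1.05$ on the remaining range, which is exactly the device the paper employs (its $mrmr$ and $rbd_{\max}$ columns). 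With that repair, your argument is complete.
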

\begin{remark}
We found that in some ranges $d_0(m,n)\leq D_{m,n}\leq 1/2$,
too few significant digits of small $p$-values (less than $10^{-14}$) 
could be computed
by the method we used for $0<D_{m,n}<d_0(m,n)$. But, one can 
compute accurately an upper bound for such $p$-values, 
which we used to verify Facts \ref{mneqn}, \ref{doubles},
and \ref{lowbdreler}
for those ranges. We give details in Section \ref{mneqnsec} and
Appendix B.  

We have in the numerator of $r_{\max}$ the $p$-values of $0.2189$ 
(corresponding to $m=294$) or more  
in Fact \ref{doubles}(b)
(Table \ref{tab:D2}), and similarly $p$-values of $0.26$ or more
in Table \ref{tab:D1} and $0.27$ or more in Table \ref{tab:HTH}.
These substantial $p$-values suggest, although they of course do not
prove, that more generally, large $r_{\max}$ do not tend to occur at small
$p$-values.

\end{remark}

\section{Proof of  Theorem \ref{meqn}}\label{meqnsec}

B.\ V.\ Gnedenko and V.\ S.\ Korolyuk in 1952 \cite{GK} 
gave an explicit formula for 
$P_{n,n,M}$, and M.\ Dwass (1967) \cite{Dwass} gave another proof.
The technique is older: the reflection principle dates back to
Andr\'e \cite{Andre}.  Bachelier in 1901
\cite[pp.\ 189-190]{Bachelier01} is the earliest reference we could
find for the method of repeated reflections, applied to symmetric
random walk. He emphasized that the formula there is rigorous
(``rigoureusement exacte''). Expositions in several later
books we have seen, e.g.\ in 1939
\cite[p.\ 32]{Bachelier39}, are not so rigorous, assuming a
normal approximation and thus treating repeated reflections of
Brownian motion.
According to J.\ Blackman
\cite[p.\ 515]{Blackman56} the null distribution of $\sup|F_n-G_n|$ had
in effect ``been treated extensively by Bachelier'' in 1912, \cite
{Bachelier12} ``in connection with certain gamblers'-ruin problems.''
  
The formula is given in 
the following proposition.
\begin{proposition}[Gnedenko and Korolyuk]\label{realprobability}
If $M = k/\sqrt{2n}$, where $1 \leq k \leq n$ is an 
integer, then
\beqs 
\text{Pr}\left( KS_{n,n} 
\geq M  \right) =\dfrac{2}{\binom{2n}{n}} 
\left(\sum_{i=1}^{\lfloor{{n}/{k}}\rfloor} (-1)^{i-1}
\binom{2n}{n+ i k}    \right).
\eeqs
\end{proposition}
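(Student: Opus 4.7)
The plan is to reduce $P_{n,n,M}$ to a count of lattice paths from $0$ to $0$ of length $2n$ with $\pm 1$ steps, then evaluate it via Feller's method of images. Under $H_0$ and conditional on the pooled order statistics, the $\binom{2n}{n}$ placements of the first sample among the $2n$ combined observations are equally likely. Encode each placement as a sign sequence $(\epsilon_1,\dots,\epsilon_{2n}) \in \{\pm 1\}^{2n}$ with $\epsilon_j = +1$ iff the $j$-th smallest pooled observation comes from the first sample, and set $S_j := \epsilon_1 + \cdots + \epsilon_j$. Because $n(F_n - G_n)$ equals $S_j$ on the interval between the $j$-th and $(j+1)$-st pooled observations, $nD_{n,n} = \max_{0\leq j \leq 2n} |S_j|$. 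Since $KS_{n,n} = \sqrt{n/2}\,D_{n,n}$, the threshold $M = k/\sqrt{2n}$ corresponds exactly to $D_{n,n}\geq k/n$, so what must be counted is the number of $\pm 1$ paths from $0$ to $0$ of length $2n$ with $\max_j|S_j|\geq k$.

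I would carry this out by complementation, computing $|\mathcal{S}_k|$, the number of such paths with $|S_j|<k$ for all $j$. The method of images for a simple random walk absorbed at $\pm k$ places signed mirror images of the source $0$ at $\{2jk : j\in\mathbb{Z}\}$, the sign being $(-1)^j$ (parity of the number of reflections producing the image). Iterated application of the Andr\'e--Bachelier reflection principle gives
\[
|\mathcal{S}_k| \;=\; \sum_{j\in\mathbb{Z}}(-1)^j \binom{2n}{n+jk}.
\]
Subtracting from $\binom{2n}{n}=\binom{2n}{n+0}$, using the symmetry $\binom{2n}{n+jk}=\binom{2n}{n-jk}$, and observing that $\binom{2n}{n+jk}=0$ when $|j|>n/k$, the complement folds into
\[
\binom{2n}{n} - |\mathcal{S}_k| \;=\; 2\sum_{i=1}^{\lfloor n/k\rfloor}(-1)^{i-1}\binom{2n}{n+ik}.
\]
Dividing by $\binom{2n}{n}$ yields the proposition.

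The main obstacle is a rigorous justification of the iterated reflections, which, as the authors note, is often glossed over. One clean route is an inclusion-exclusion on the first-passage history of the two barriers: for each alternating sign word $\sigma=(s_1,\dots,s_r)$ with $s_i\in\{+,-\}$, let $A_\sigma$ be the set of $0\to 0$ paths whose successive first passages at $\pm k$ are $s_1 k, s_2 k, \dots, s_r k$. Reflecting the initial segment about the first-hit barrier, then the tail segment about the last-hit barrier, and iterating inward sets up a bijection between $A_\sigma$ and unrestricted paths from $0$ to an image point at distance $2rk$, so $|A_\sigma| = \binom{2n}{n+rk}$; $(-1)^r$-weighted inclusion-exclusion then produces the signed image sum. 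The delicate bookkeeping is verifying that each reflection genuinely preserves the first-passage structure of the as-yet-unvisited barriers, and that the images at $\pm 2rk$ do not collide with the source for the values of $r$ summed over.
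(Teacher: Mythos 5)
The paper itself gives no proof of Proposition \ref{realprobability}: it attributes the formula to Gnedenko and Korolyuk, cites Dwass for another proof, and traces the repeated-reflection technique to Andr\'e and Bachelier. Your argument is exactly that classical proof, and its skeleton is sound: under $H_0$ the $\binom{2n}{n}$ interleavings are equally likely; $nD_{n,n}=\max_{0\leq j\leq 2n}|S_j|$ for the resulting bridge walk; $M=k/\sqrt{2n}$ translates to $\max_j|S_j|\geq k$; the image system for absorbing barriers at $\pm k$ places signed sources at $2jk$ with sign $(-1)^j$, giving $|\mathcal{S}_k|=\sum_{j\in\zz}(-1)^j\binom{2n}{n+jk}$; and the folding to $2\sum_{i=1}^{\lfloor n/k\rfloor}(-1)^{i-1}\binom{2n}{n+ik}$ uses precisely the symmetry $\binom{2n}{n+ik}=\binom{2n}{n-ik}$ and the vanishing for $ik>n$ that you cite. (Sanity check at $n=2$, $k=1$: the formula gives $\frac{2}{6}(\binom{4}{3}-\binom{4}{4})=1$, correct since every bridge of length $4$ hits $\pm1$ at its first step.)

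One bookkeeping point in your rigor step needs repair. As written, $A_\sigma$ is the set of paths whose successive first passages at $\pm k$ \emph{are} $s_1k,\dotsc,s_rk$, i.e., whose exact alternation history is $\sigma$; but then the claimed bijection $|A_\sigma|=\binom{2n}{n+rk}$ is false, and moreover the sets $A_\sigma$ would be pairwise disjoint, so no signed inclusion--exclusion could even arise. Concretely, for $n=2$, $k=1$ exactly two paths ($++--$ and $+-+-$) have complete history $(+)$, while $\binom{4}{3}=4$. The $r$-fold reflection is instead a bijection between unrestricted paths from $0$ to $2rk$ and the paths that hit $s_1k$, then $s_2k$, \dots, then $s_rk$ \emph{in that order with arbitrary behavior afterwards}; call these ``at least'' events $U_r^{\pm}$. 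Inclusion--exclusion is then valid because a path whose maximal alternation sequence has length $\ell$ and initial sign $s$ lies in $U_r^{s}$ for $r\leq\ell$ and in $U_r^{-s}$ for $r\leq\ell-1$, so its net weight is $\sum_{r=1}^{\ell}(-1)^{r-1}+\sum_{r=1}^{\ell-1}(-1)^{r-1}=1$ whatever the parity of $\ell$. Note also that this identity evaluates $\Pr\bigl(\max_j|S_j|\geq k\bigr)$ directly as $2\sum_{r\geq1}(-1)^{r-1}\binom{2n}{n+rk}/\binom{2n}{n}$, so your detour through the complement $|\mathcal{S}_k|$, while harmless, becomes redundant once the inclusion--exclusion is set up correctly.
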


Since the probability 
$P_{n,n,M}=\text{Pr}\left(KS_{n,n}  \geq M  \right)$ 
is clear\-ly not greater than 1, we just need to consider the $M$ such that 
$$
2e^{-2M^2} \leq 1,
$$ 
i.e., we just need to consider the integer pairs $(n,k)$ where 
\beq 
k \geq \sqrt{n\ln{2}}. \label{ksmall} 
\eeq

The exact formula for $P_{n,n,M}$ is complicated. Thus we want to determine upper 
bounds for $P_{n,n,M}$ which are of simpler forms. We prove the main 
theorem by two steps: we first find two such upper bounds for $P_{n,n,M}$ as in Lemma \ref{up1} and \ref{up2} and 
then show (\ref{mt}) holds when $P_{n,n,M}$ is replaced by the two upper bounds for 
two ranges of pairs $(k,n)$ respectively, as will be stated in 
Propositions
\ref{thm1} and \ref{thm2}. 

\begin{lemma}   \label{up1}
An upper bound for $P_{n,n,M}$ can be given by 
$
{2\binom{2n}{n+k}}/{\binom{2n}{n}}.$
\end{lemma}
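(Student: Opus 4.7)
The plan is to read off the bound directly from the Gnedenko--Korolyuk identity in Proposition \ref{realprobability} by isolating the first term of the alternating sum and showing that the remainder is non-positive. Writing $N := \lfloor n/k\rfloor$ and $a_i := \binom{2n}{n+ik}$ for $1\le i\le N$, the formula reads
\beqs
P_{n,n,M} = \frac{2}{\binom{2n}{n}}\Bigl(a_1 - a_2 + a_3 - a_4 + \cdots + (-1)^{N-1}a_N\Bigr),
\eeqs
so the claimed bound $2\binom{2n}{n+k}/\binom{2n}{n}$ is equivalent to the inequality
\beqs
S := \sum_{i=2}^{N}(-1)^{i-1}a_i = -a_2 + a_3 - a_4 + \cdots + (-1)^{N-1}a_N \le 0.
\eeqs

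First I would observe that the sequence $(a_i)_{i=1}^{N}$ is non-increasing. Indeed, for $1\le i\le N$ we have $n+ik\le n+Nk\le 2n$, and on the range $[n,2n]$ the binomial coefficients $\binom{2n}{j}$ are non-increasing in $j$; since $n+ik$ is strictly increasing in $i$, so $a_1\ge a_2\ge\cdots\ge a_N\ge 0$.

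Next I would apply the standard Leibniz-type pairing argument to $S$. Group consecutive terms as $(-a_2+a_3)+(-a_4+a_5)+\cdots$. Each completed pair $-a_{2j}+a_{2j+1}$ is $\le 0$ by monotonicity. If $N$ is odd then $N-1$ is even and all terms pair up, giving $S\le 0$. If $N$ is even then after pairing one unpaired term $(-1)^{N-1}a_N=-a_N\le 0$ remains, and again $S\le 0$. Either way $S\le 0$, which is exactly the desired inequality.

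There is no real obstacle here; the only point requiring care is verifying the monotonicity hypothesis $a_i\ge a_{i+1}$ for all $i$ in the range, which relies on the explicit upper limit $N=\lfloor n/k\rfloor$ ensuring $n+ik\le 2n$ so that we stay on the decreasing side of the binomial row. Once that is checked, the lemma follows by the alternating-series pairing argument above.
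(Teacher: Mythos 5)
Your proof is correct and takes essentially the same approach as the paper: the paper likewise isolates the first term of the Gnedenko--Korolyuk alternating sum and observes that the remaining summands alternate in sign and decrease in magnitude, hence the tail $\sum_{i=2}^{\lfloor n/k\rfloor}(-1)^{i-1}\binom{2n}{n+ik}\le 0$. You have simply supplied the two details the paper leaves implicit, namely that $n+ik\le 2n$ for all $i\le\lfloor n/k\rfloor$ keeps the indices on the non-increasing side of the binomial row, and the Leibniz-style pairing with the parity check on the last term.
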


\begin{proof}
This is clear from Proposition \ref{realprobability}, since the summands 
alternate in signs and decrease in magnitude. Therefore we must have 
$$ 
\sum_{i=2}^{\lfloor{{n}/{k}}\rfloor} (-1)^{i-1}\binom{2n}{n+ i k} \leq 0.$$ \end{proof}

As a consequence of Lemma \ref{up1}, to prove (\ref{mt}) for a pair $(n,k)$, 
it will suffice to show that
\beq
 {2\binom{2n}{n+k}}/{\binom{2n}{n}} < 2\exp\left({-{k^2}/{n}}\right).
\label{simple1} 
\eeq 

We first define some auxiliary functions.
\begin{notation}\label{PHnot} For all $n, k \in \rr$ such that $1 \leq k \leq n$, define
\beqs 
PH(n,k) := \ln\binom{2n}{n+k} - \ln\binom{2n}{n} + \dfrac{k^2}{n},  
\eeqs
where for $n_1\geq n_2$, 
$$
\binom{n_1}{n_2} = \dfrac{\Gamma(n_1+1)}{\Gamma(n_1-n_2+1)\Gamma(n_2+1)},
$$ 
and $\Gamma(x)$ is the Gamma function, defined for $x>0$ by 
$$
\Gamma(x) = \int_0^{\infty} t^{x-1}e^{-t}dt.
$$ 
It satisfies the well-known recurrence $\Gamma(x+1)\equiv x \Gamma(x).$
\end{notation} 
It is clear that $PH(n,k) \leq 0$ if and only if (\ref{simple1}) holds. 

\begin{notation} For all $n, k \in \rr$ such that $1 \leq k \leq n$, define
\begin{align}
DPH(n,k) &:= PH(n,k)-PH(n,k-1)\nonumber \\
&= \ln\left( \dfrac{n-k+1}{n+k} \right)  + \dfrac{2k-1}{n}.
\end{align}
\end{notation}

\begin{lemma}\label{DPHp} 
When $n \geq 19$, $DPH(n,k)$ is decreasing in $k$ when $k \geq \sqrt{n \ln{2}}$.
\end{lemma}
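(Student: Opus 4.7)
The plan is to treat $k$ as a continuous variable in $[1,n]$ (which is legitimate because the explicit formula for $DPH(n,k)$ extends naturally to real $k$) and show that $\partial DPH/\partial k \leq 0$ on the range $k \geq \sqrt{n \ln 2}$. Differentiating the given formula
$$DPH(n,k) = \ln\left(\frac{n-k+1}{n+k}\right) + \frac{2k-1}{n}$$
with respect to $k$ gives
$$\frac{\partial DPH}{\partial k} = -\frac{1}{n-k+1} - \frac{1}{n+k} + \frac{2}{n}.$$

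Combining the two rational terms,
$$\frac{1}{n-k+1} + \frac{1}{n+k} = \frac{2n+1}{(n-k+1)(n+k)},$$
so $\partial DPH/\partial k \leq 0$ is equivalent to $2(n-k+1)(n+k) \leq n(2n+1)$. Expanding and cancelling the $2n^2$ terms reduces this to the clean quadratic inequality
$$2k^2 - 2k - n \geq 0, \qquad \text{i.e.,} \qquad k \geq \frac{1 + \sqrt{1+2n}}{2}.$$

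It then remains to verify the numerical threshold: for $n \geq 19$, one has $\sqrt{n\ln 2} \geq (1+\sqrt{1+2n})/2$, so the hypothesis on $k$ already forces $k$ into the region where the derivative is non-positive. Squaring once (both sides positive for $n\geq 3$) and rearranging yields $n(2\ln 2 - 1) - 1 \geq \sqrt{1+2n}$; squaring again (the left side is positive for $n\geq 3$ since $2\ln 2 - 1 > 0$) and collecting terms reduces the problem to
$$n(2\ln 2 - 1)^2 \geq 4\ln 2,$$
i.e., $n \geq 4\ln 2 / (2\ln 2 - 1)^2 \approx 18.58$. So $n \geq 19$ is exactly the smallest integer for which the argument closes.

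No step is an obstacle of substance: the derivative computation is elementary, the inequality reduces to a quadratic, and the final check is a two-step squaring. The only subtlety worth noting is that each squaring must be justified by checking positivity of both sides, which is automatic for $n \geq 3$. The tightness of the bound $n \geq 19$ comes from the fact that $4\ln 2/(2\ln 2 - 1)^2$ lies strictly between $18$ and $19$.
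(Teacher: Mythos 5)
Your proof is correct and takes essentially the same route as the paper's: differentiate $DPH(n,k)$ in $k$, note that the sign of the derivative is governed by the quadratic $-2k^2+2k+n$ (the paper writes the derivative as $\frac{-2k^2+2k+n}{n(-k^2+k+n^2+n)}$, which is exactly your combined expression), so the derivative is nonpositive precisely when $k \geq \frac{1}{2}\left(1+\sqrt{2n+1}\right)$, and then check that $\sqrt{n\ln 2} \geq \frac{1}{2}\left(1+\sqrt{2n+1}\right)$ once $n \geq 19$. Your only departure is that you carry out the two squarings explicitly to get the sharp threshold $n \geq 4\ln 2/(2\ln 2 - 1)^2 \approx 18.58$, a detail the paper asserts without computation.
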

\begin{proof}
Clearly $DPH(n,k)$ is differentiable with respect to $k$ on the domain 
$n, k \in \rr$
such that $n>0$ and $0<k<n + 1/2$, 
with partial derivative given by
\beq \dfrac{\partial}{\partial k}DPH(n,k) = \dfrac{-2 k^2+2 k+n}{n
   \left(-k^2+k+n^2+n\right)}.\label{dk} \eeq
   It is easy to check that the denominator is positive on the given domain. 
Thus 
(\ref{dk}) is greater than 0 if and only if 
   $-2k^2 + 2k + n >0,$ which is equivalent to 
   $$ \dfrac{1}{2} \left(1-\sqrt{2n+1}\right)< k < \dfrac{1}{2} \left(1+\sqrt{2n+1}\right).$$
   
   Since we have that when $n\geq 19$, 
   $$\sqrt{n\ln{2}} > \dfrac{1}{2} \left(1+\sqrt{2n+1}\right),$$ $DPH(n,k)$ is decreasing in $k$ whenever $n\geq 19$.
\end{proof}

\begin{lemma}\label{DPHn} (a) For $0<\alpha<2/\sqrt{\ln 2}$ and all $n\geq 1$,
   \beq 
n-\alpha  \sqrt{n} \sqrt{\ln 2}+1 > 0. 
\label{qua} 
\eeq 
\fl
(b) For $\sqrt{3/(2\ln 2)} <\alpha < 2/\sqrt{\ln2}$ and $n$ large enough,
\beqs
\dfrac{d}{d n} DPH(n, \alpha\sqrt{n \ln{2}})>0.
\eeqs
\fl
(c) For $n \geq 3$, $DPH(n, \sqrt{3n})$ is increasing in $n$.

\fl
(d)  $DPH(n,\sqrt{3n})\to 0$ as $n\to\infty$.

\fl
(e) For all $n\geq 3$,
$
DPH(n, \sqrt{3n})<0.
$
\end{lemma}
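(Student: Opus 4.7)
Part (e) follows almost immediately from parts (c) and (d), which are the two preceding claims in the same lemma. Set $a_n := DPH(n,\sqrt{3n})$ for integers $n \geq 3$. By (c) the sequence $\{a_n\}$ is strictly increasing, and by (d) it satisfies $a_n \to 0$ as $n \to \infty$.

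My plan is simply to observe that a strictly increasing real sequence with limit $0$ must consist entirely of strictly negative terms. Indeed, if $a_{n_0} \geq 0$ for some $n_0 \geq 3$, then strict monotonicity gives $a_n > a_{n_0+1} > a_{n_0} \geq 0$ for every $n > n_0 + 1$, which forces $\lim_n a_n \geq a_{n_0+1} > 0$, contradicting (d). Hence $a_n < 0$ for all $n \geq 3$, which is exactly (e). No manipulation of the Gamma-function expressions inside $PH$ is required at this stage.

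The only potential obstacle is a reading issue with (c): if the proof of (c) yields only non-strict monotonicity, the limiting argument would give $a_n \leq 0$ rather than the required $a_n < 0$. In that situation I would patch the argument with a direct numerical check at the base case $n=3$, where
\[
DPH(3,\sqrt{3}\cdot\sqrt{3}) \;=\; \ln\!\left(\frac{3-\sqrt{3}+1}{3+\sqrt{3}}\right) + \frac{2\sqrt{3}-1}{3}
\]
is easily seen to be strictly negative; strict negativity at $n=3$ combined with weak monotonicity and the limit $0$ then still yields $a_n < 0$ for all $n \geq 3$. Either way the deduction is one short paragraph, so the real work of the lemma lies in (c) and (d), not in (e).
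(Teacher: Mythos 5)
Your deduction of (e) is correct and is exactly the paper's route: the authors also obtain (e) as an immediate consequence of (c) and (d), and your extra care about strict versus non-strict monotonicity is well placed but turns out to be unnecessary, since the paper's proof of (c) shows the derivative \(\frac{d}{dn}DPH(n,\sqrt{3n}) = \bigl(3\sqrt{3}\,n - 10\sqrt{n} + 2\sqrt{3}\bigr)\big/\bigl(2(\sqrt{n}+\sqrt{3})(n-\sqrt{3}\sqrt{n}+1)n^2\bigr)\) is strictly positive for all real \(n\geq 3\), so the monotone-plus-limit argument closes without your base-case patch (and in fact works for real \(n\), not just the integer sequence \(a_n\)).

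However, there is a genuine gap: the statement under review is the entire Lemma, parts (a) through (e), and (a)--(d) are claims of this lemma, not previously established results you may cite. By treating (c) and (d) as ``the two preceding claims'' you have assumed precisely the content that carries the real work, and your proposal proves only the one-sentence corollary. What is missing, following the paper: for (a), view the left side as a quadratic in \(\sqrt{n}\) with positive leading coefficient and discriminant \(\alpha^2\ln 2 - 4 < 0\); for (b), substitute \(k=\alpha\sqrt{n\ln 2}\) into \(DPH\), differentiate in \(n\), note the denominator is positive by (a), and observe the numerator is a quadratic in \(\sqrt{n}\) whose leading coefficient \(2\alpha^3\ln^{3/2}(2) - 3\alpha\sqrt{\ln 2}\) is positive exactly when \(\alpha > \sqrt{3/(2\ln 2)}\), so the derivative is eventually positive; for (c), specialize to \(\alpha = \sqrt{3/\ln 2}\) and verify \(3\sqrt{3}\,n - 10\sqrt{n} + 2\sqrt{3} \geq 0\) for \(n\geq 3\); for (d), compute directly that \(\frac{2\sqrt{3n}-1}{n} + \ln\bigl(\frac{n-\sqrt{3n}+1}{n+\sqrt{3n}}\bigr) \to 0\) as \(n\to\infty\). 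Without these four arguments (or substitutes for them), the proposal does not establish the lemma.
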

\begin{proof} Part (a) holds because the left side of (\ref{qua}), as a quadratic
in $\sqrt{n}$, has the leading term $n=\sqrt{n}^2 >0$ and discriminant
$\Delta = \alpha^2 \ln 2 -4 < 0$ under the assumption. 

For part (b), by plugging  $k = \alpha\sqrt{n \ln{2}}$ into $DPH(n,k)$, 
we have
\beq\label{DPHplugin}  
DPH(n, \alpha\sqrt{n \ln{2}})=\dfrac{2 \alpha \sqrt{n \ln 2}-1}{n} + 
\ln \left(\dfrac{-\alpha \sqrt{n\ln 2}+n+1}{\alpha \sqrt{n \ln 2}+n}\right),
\eeq
which is well-defined by part (a).
It is differentiable with respect to $n$  with derivative given by
   
\begin{align} &\dfrac{d}{d n} DPH(n, \alpha\sqrt{n \ln{2}}) \nonumber \\
=&\dfrac{n\left(2 \alpha ^3 \ln^{\frac{3}{2}}(2)-3 \alpha \sqrt{\ln 2}\right)+\sqrt{n} \left(2-4\alpha ^2 \ln 2\right)+2 \alpha  \sqrt{\ln 2}}{2 n^2 \left(\alpha  \sqrt{\ln 2}+\sqrt{n}\right)\left(-\alpha  \sqrt{n}\sqrt{\ln 2}+n+1\right)} \label{dDPHn}. \end{align}


By part (a), the denominator 
$$
2 n^2 \left(\alpha  \sqrt{\ln2}+\sqrt{n}\right)
   \left(-\alpha  \sqrt{n}\sqrt{\ln 2}+n+1\right)
$$ 
is positive.
The numerator will be positive for $n$ large enough, since the coefficient of 
its leading term, 
$$
2 \alpha ^3 \ln^{{3}/{2}}(2)-3 \alpha \sqrt{\ln 2},
$$ 
is positive by the assumption 
$\alpha > \sqrt{{3}/{(2 \ln 2)}}$ in this part. So part (b) is proved. 


For part (c),
when $\alpha = \sqrt{3}/\sqrt{\ln{2}}$, we have 
$$
\dfrac{d}{d n} DPH(n, \sqrt{3n}) =
\dfrac{3 \sqrt{3} n-10 \sqrt{n}+2 \sqrt{3}}{2\left(\sqrt{n}+\sqrt{3}\right) 
\left(n-\sqrt{3} \sqrt{n}+1\right)
   n^2}.
$$
This is clearly positive when $3 \sqrt{3} n-10 \sqrt{n}+2\sqrt{3}\geq 0$, 
which always holds when $n\geq 3$. 
This proves part (c).

For part (d),
plugging 
$\alpha = \sqrt{{3}/{\ln{2}}}$ into (\ref{DPHplugin}), we have
\begin{eqnarray*}
&& \lim_{n \to \infty} DPH(n, \sqrt{3n}) \\
&=&\lim_{n \to \infty} \left( \dfrac{2 \sqrt{3n}-1}{n} + \ln \left(\dfrac{n-\sqrt{3n}+1}{n+ \sqrt{3n}}\right)
\right) \\
&=& 0,
\end{eqnarray*}
proving part (d). Part (e) then follows from parts (c) and (d).
\end{proof}

\begin{lemma}\label{L38}
For $n \geq 1$, $$DPH(n, \sqrt{n\ln{2}}) > 0.$$
\end{lemma}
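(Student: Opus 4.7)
The plan is to reduce the claim to a polynomial inequality via a symmetric rewrite of the logarithm and a sharp Taylor bound on $\tanh^{-1}$. Set $s := \sqrt{n\ln 2}$; then, using the formula for $DPH$, the inequality $DPH(n,s) > 0$ is equivalent to
\[
\ln\!\Big(\frac{n+s}{n+1-s}\Big) < \frac{2s-1}{n}.
\]
I would first symmetrize about the midpoint $n+1/2$: writing $(n+s)/(n+1-s) = (1+y)/(1-y)$ with $y := (s-1/2)/(n+1/2)$ turns the left side into $2\tanh^{-1}(y)$. Since $s = \sqrt{n\ln 2} \le \sqrt{n} < n+1$ for every $n \ge 1$, we have $y \in (0,1)$, and the target becomes $\tanh^{-1}(y) < (s-1/2)/n =: z$, with gap $z - y = (s-1/2)/(n(2n+1))$.

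Second, I would apply the elementary term-by-term bound
\[
\tanh^{-1}(y) - y \;=\; \sum_{k \ge 1} \frac{y^{2k+1}}{2k+1} \;<\; \sum_{k \ge 1} \frac{y^{2k+1}}{3} \;=\; \frac{y^3}{3(1-y^2)},
\]
valid for $0 < y < 1$. This reduces the claim to the sufficient inequality $y^3/[3(1-y^2)] \le z - y$.

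Third, clearing denominators using the factorization $(n+1/2)^2 - (s-1/2)^2 = (n+s)(n+1-s)$ and canceling the factor $s-1/2 > 0$ reduces the sufficient inequality to
\[
2n(s-1/2)^2 \;\le\; 3(n+s)(n+1-s) \;=\; 3\bigl(n^2 + n + s - s^2\bigr).
\]
Substituting $s^2 = n\ln 2$ and collecting terms gives
\[
(3-2\ln 2)\,n^2 \;+\; (5/2 - 3\ln 2)\,n \;+\; (2n+3)\sqrt{n\ln 2} \;\ge\; 0,
\]
and since $\ln 2 < 5/6$, all three coefficients are strictly positive, so every term is nonnegative for $n \ge 1$. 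Combined with the strict Taylor estimate, this yields the strict inequality $DPH(n,\sqrt{n\ln 2}) > 0$.

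The main obstacle is that a naive Taylor expansion of the logarithm around $n$ leaves an $O(1/n)$ residual of the wrong sign, whereas $DPH(n,s)$ is only $O(1/n^{3/2})$ in size for this choice of $s$; any looser control of the logarithm would give a polynomial inequality failing for large $n$. Centering at $n+1/2$ (the $\tanh^{-1}$ form) cancels all even-order error terms by symmetry, and the bound $\tanh^{-1}(y) < y + y^3/(3(1-y^2))$ is then just sharp enough to leave a polynomial remainder with manifestly positive coefficients.
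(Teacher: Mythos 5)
Your proof is correct, and it takes a genuinely different route from the paper's. The paper proves this lemma by calculus in $n$: it specializes the derivative formula (\ref{dDPHn}) to $\alpha=1$, notes the denominator is positive by (\ref{qua}), and shows the numerator, viewed as a quadratic in $\sqrt{n}$, has negative leading coefficient $2\ln^{3/2}(2)-3\sqrt{\ln 2}$ and negative discriminant, so $DPH(n,\sqrt{n\ln 2})$ is strictly decreasing in $n$; since it tends to $0$ as $n\to\infty$, it must be positive. You instead prove the inequality directly for each fixed $n$, with no differentiation: centering at $n+\tfrac12$ turns the logarithm into $2\tanh^{-1}(y)$ with $y=(s-\tfrac12)/(n+\tfrac12)$, the tail bound $\tanh^{-1}(y)-y<y^3/(3(1-y^2))$ is applied, and the factorization $(n+\tfrac12)^2-(s-\tfrac12)^2=(n+s)(n+1-s)$ clears denominators. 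I checked the algebra: the sufficient inequality does reduce to $2n(s-\tfrac12)^2\le 3(n+s)(n+1-s)$, and substituting $s^2=n\ln 2$ gives exactly $(3-2\ln 2)n^2+(\tfrac52-3\ln 2)n+(2n+3)\sqrt{n\ln 2}\ge 0$, whose coefficients are all positive since $\ln 2<5/6$. Your closing remark is also on point: $DPH(n,\sqrt{n\ln 2})=\Theta(n^{-3/2})$, so the symmetric centering (which kills the even-order error terms) is essential, and a naive expansion about $n$ would indeed fail. What each approach buys: the paper's argument reuses machinery already built for Lemma \ref{DPHn} and establishes the stronger structural fact of monotone decrease to zero; yours is self-contained, more elementary, and yields an explicit positive lower bound of order $n^{-3/2}$ as a byproduct. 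One small point you should make explicit: canceling the factor $s-\tfrac12$ and concluding $y>0$ both require $s=\sqrt{n\ln 2}>\tfrac12$, which holds for all $n\ge 1$ since $\sqrt{\ln 2}\doteq 0.8326$; as written you justify only $y<1$.
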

\begin{proof}
By (\ref{dDPHn}) for $\alpha< 2/\sqrt{\ln 2}$, in this case $\alpha=1$,
 we have that
$$
\dfrac{d}{dn}DPH(n, \sqrt{n \ln{2}}) = \dfrac{n \left(2 \ln^{{3}/{2}}(2)
-3 \sqrt{\ln 2}\right)+\sqrt{n}(2-4 \ln 2)+2 \sqrt{\ln 2}}{2n^2
\left(\sqrt{n}+\sqrt{\ln 2}\right) \left(n-\sqrt{n} \sqrt{\ln 2}+1\right)}.
$$   
   The denominator is always positive for $n\geq 1$ by (\ref{qua}).
   The numerator as a quadratic in $\sqrt{n}$ has leading coefficient 
$2 \ln^{{3}/{2}}(2)-3 \sqrt{\ln 2}<0$. 
This quadratic also has a negative discriminant, so the numerator
is always negative when $n\geq1$.
   
   Similarly, we have
   \begin{eqnarray*}
&& \lim_{n \to \infty} DPH(n, \sqrt{n\ln 2}) \\
&=&\lim_{n \to \infty} \left( \dfrac{2 \sqrt{n \ln 2}-1}{n} + \ln \left(\dfrac{n-\sqrt{n \ln 2}+1}{n+ \sqrt{n \ln 2}}\right)
\right) \\
&=& 0.
\end{eqnarray*}
 Therefore $DPH(n, \sqrt{n\ln{2}})>0$ for all $n\geq 1$.
\end{proof}

Summarizing Lemmas \ref{DPHp}, \ref{DPHn}, and \ref{L38}, we have the 
following corollary:
\begin{corollary}\label{DPH}
For any fixed $n \geq 19$, $DPH(n,k)$ is decreasing in $k$ when $k\geq \sqrt{n \ln{2}}$. Furthermore, $$DPH\left(n,  \sqrt{n \ln 2} \right) > 0, \text{  } DPH\left(n, \sqrt{3n }\right)<0.$$
\end{corollary}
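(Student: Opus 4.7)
The plan is to assemble the corollary directly from the three lemmas just proved in this section, verifying only that the single hypothesis $n \geq 19$ is strong enough to invoke each ingredient. First I would cite Lemma \ref{DPHp} to obtain the monotonicity clause: its conclusion is precisely that $DPH(n,k)$ is decreasing in $k$ on the range $k \geq \sqrt{n \ln 2}$ whenever $n \geq 19$, so the first assertion of the corollary requires no additional work.

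Next I would apply Lemma \ref{L38} to get $DPH(n, \sqrt{n \ln 2}) > 0$; that lemma establishes this for all $n \geq 1$, hence a fortiori for $n \geq 19$. Finally, I would invoke Lemma \ref{DPHn}(e) to get $DPH(n, \sqrt{3n}) < 0$; that part requires only $n \geq 3$, again implied by $n \geq 19$. Since each of the three clauses in the corollary is verbatim one of these lemmas and the hypothesis $n \geq 19$ dominates each individual hypothesis ($n \geq 19$, $n \geq 1$, $n \geq 3$), the corollary follows immediately. There is no substantive obstacle here: the real work lies in Lemmas \ref{DPHp}, \ref{DPHn}, and \ref{L38}, and the role of this corollary is purely to package their conclusions into the form needed for the applications to come.
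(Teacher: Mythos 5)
Your proposal is correct and matches the paper exactly: the paper states this corollary as a direct summary of Lemmas \ref{DPHp}, \ref{DPHn}, and \ref{L38}, with the monotonicity from Lemma \ref{DPHp} ($n\geq 19$), positivity at $k=\sqrt{n\ln 2}$ from Lemma \ref{L38} ($n\geq 1$), and negativity at $k=\sqrt{3n}$ from Lemma \ref{DPHn}(e) ($n\geq 3$). Your observation that the hypothesis $n\geq 19$ dominates each lemma's individual hypothesis is precisely the (implicit) content of the paper's one-line justification.
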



\begin{proposition} \label{thm1}
The inequality (\ref{mt}) holds for all integers $n,k$ such that $n \geq 108$ and $\sqrt{3n} \leq k \leq n$.
\end{proposition}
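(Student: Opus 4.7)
The plan is to reduce (\ref{mt}) to a monotonicity argument in $k$, and then to the analysis of a single boundary function $n \mapsto PH(n,\sqrt{3n})$.

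First, by Lemma~\ref{up1}, establishing (\ref{mt}) for an integer pair $(n,k)$ with $n \geq 108$ and $\sqrt{3n} \leq k \leq n$ reduces to (\ref{simple1}), i.e.\ to $PH(n,k) \leq 0$ on the same range.

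Second, I would apply Corollary~\ref{DPH}. Since $n \geq 108 > 19$, $DPH(n,k)$ is decreasing in $k$ on $\{k \geq \sqrt{n \ln 2}\}$ and $DPH(n,\sqrt{3n}) < 0$. Because $\sqrt{3n} > \sqrt{n\ln 2}$, this forces $DPH(n,k) < 0$ for all real $k \geq \sqrt{3n}$. Consequently the Gamma-extended function $k \mapsto PH(n,k)$ is strictly decreasing on $[\sqrt{3n},n]$, and for any integer $k$ in our range, $PH(n,k) \leq PH(n,\lceil \sqrt{3n}\rceil) \leq PH(n,\sqrt{3n})$. So it suffices to show $PH(n,\sqrt{3n}) \leq 0$ for all real $n \geq 108$.

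Third, and this is the main step, I would analyze the single-variable function $g(n) := PH(n,\sqrt{3n})$. The approach has two parts: (a) verify $\lim_{n \to \infty} g(n) = 0$ by expanding $\ln \binom{2n}{n+k}/\binom{2n}{n} = \sum_{j=1}^{k} \ln((n-j+1)/(n+j))$ in powers of $j/n$ with $k = \sqrt{3n}$, so that the leading $-k^2/n = -3$ cancels against the $+k^2/n = 3$ from the definition of $PH$, leaving $g(n) = O(1/n)$ with a negative leading coefficient; and (b) show that $g$ is strictly increasing on $n \geq 108$, by differentiating $PH(n,\sqrt{3n})$ via the digamma representation $\tfrac{d}{dx}\ln\Gamma(x) = \psi(x)$ and arguing that the resulting expression is positive. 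Combined, (a) and (b) yield $g(n) < 0$ for all $n \geq 108$.

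The main obstacle is precisely part (b): producing a tractable, sign-controllable expression for $g'(n)$, since digamma-based derivatives can be algebraically awkward, and the required positivity must hold uniformly in $n$ down to $n=108$ rather than merely for $n$ large. A viable fallback is to sidestep (b) entirely by bounding $g(n)$ above directly. Specifically, termwise bounds of the form $\ln((n-j+1)/(n+j)) \leq -(2j-1)/n - c(j,n)$ with $c(j,n)>0$ a suitable correction, summed for $1 \leq j \leq \sqrt{3n}$, would give $g(n) \leq -\delta(n) < 0$ for $n \geq 108$, completing the proof without any monotonicity claim about $g$ in $n$.
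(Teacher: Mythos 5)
Your first two steps coincide with the paper's strategy (reduce via Lemma~\ref{up1} to $PH(n,k)\leq 0$, then use Corollary~\ref{DPH} to push $k$ down to the boundary), but the reduction to the single real point $k=\sqrt{3n}$ is not justified by the lemmas you cite. Corollary~\ref{DPH} controls the \emph{unit difference} $DPH(n,k)=PH(n,k)-PH(n,k-1)$; negativity of unit differences does give $PH(n,k)\leq PH(n,\lceil\sqrt{3n}\rceil)$ for integers $k$ in the range, but it does not give $PH(n,\lceil\sqrt{3n}\rceil)\leq PH(n,\sqrt{3n})$, which is a step of length less than one, nor your claim that the Gamma-extended $k\mapsto PH(n,k)$ is strictly decreasing on $[\sqrt{3n},n]$: a function can have negative unit increments while oscillating inside unit intervals. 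Repairing this requires $\partial PH/\partial k\leq 0$ there, i.e.\ a digamma estimate you never supply. The paper avoids the issue entirely: it reduces only to the smallest \emph{integer} $k\geq\sqrt{3n}$ and then proves the bound uniformly for $k=t\sqrt{n}$ over the whole window $\sqrt{3}\leq t<2$, precisely because the minimal integer can sit anywhere in $[\sqrt{3n},\sqrt{3n}+1)$.

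The analytic core of your plan also rests on a false expansion. At $t=\sqrt{3}$ the coefficient of $1/n$, namely $t^2/2-t^4/6$, vanishes identically, so $g(n)=PH(n,\sqrt{3n})$ is not ``$O(1/n)$ with a negative leading coefficient'' but $-\tfrac{1}{20n^2}+O(n^{-3})$. This is not cosmetic: the whole difficulty of the proposition is that the $1/n$ term is zero (or only $O(n^{-3/2})$ for the ceiling integer), so one must pin down the $1/n^2$ coefficient ($\leq -1/20$ on $[\sqrt{3},2)$) against an explicitly bounded $O(1/n^3)$ remainder ($<5.4$), and it is exactly this competition, via $5.4/n^3\leq 1/(20n^2)$, that produces the threshold $n\geq 108$ in the paper's proof (Stirling with Nanjundiah's error bounds plus explicit Taylor remainders for $I_1,I_2,I_3$). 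Your part (b) --- monotonicity of $g$ in $n$ --- is left unresolved and cannot be borrowed from the paper, since Lemma~\ref{DPHn}(c) concerns $DPH(n,\sqrt{3n})$, not $PH(n,\sqrt{3n})$. The fallback is likewise incomplete: the representation $\ln\binom{2n}{n+k}-\ln\binom{2n}{n}=\sum_{j=1}^{k}\ln\bigl((n-j+1)/(n+j)\bigr)$ requires integer $k$ and is meaningless at $k=\sqrt{3n}$; and after the cancellation just noted, the unspecified corrections $c(j,n)$ would have to be summed to second order with controlled remainders down to $n=108$ --- essentially redoing the paper's computation in summed form. As written, the proposal reproduces the paper's easy reduction but leaves both the delicate boundary comparison and the quantitative heart of the proof unproven.
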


\begin{proof}
By Lemma \ref{up1}, the probability $P_{n,n,M}$ is bounded above by 
${2\binom{2n}{n+k}}/{\binom{2n}{n}}$. We here prove this proposition by 
showing that (\ref{simple1}) holds
for all integers $n$, $k$ such that $\sqrt{3n} \leq k \leq n$ and $n \geq 108$.

To prove (\ref{simple1}) is equivalent to proving
\beq
\ln{\binom{2n}{ n+k}} - \ln{\binom{2n}{ n}} + \dfrac{k^2}{n} < 0 \label{easier}
\eeq
for $k = t \sqrt{n}$ where $t \geq \sqrt{3}$, by Notation \ref{PHnot}.

Rewriting (\ref{easier}), we need to show that for $k\geq \sqrt{3n}$,
\beq 
\ln{\left(\dfrac{n!n!}{(n+k)!(n-k)!}  \right) + \dfrac{k^2}{n} < 0  } \label{easier2}.\eeq

We will use Stirling's formula with error bounds.
Recall that one form of such bounds \cite{ErrorTerm} states 
that 
$$
{\sqrt{2\pi}}
\exp\left({\dfrac{1}{12s}-\dfrac{1}{360s^3}-s}\right)s^{s+1/2}
 \leq s! \leq{\sqrt{2\pi}}\exp\left({\dfrac{1}{12s}-s}\right)
s^{s+1/2}
$$ 
for any positive integer $s$.
We plug the bounds for $s!$ into $\dfrac{n!n!}{(n+k)!(n-k)!}$, getting
$$
\dfrac{n!n!}{(n+k)!(n-k)!} \leq 
\dfrac{n^{2n+1}(n+k)^{-n-k-\frac{1}{2}}(n-k)^{k-n-\frac{1}{2}}
\exp\left({\dfrac{1}{6n}}\right)}
{\exp\left({\dfrac{1}{12}\left[\dfrac 1{n+k}+\dfrac{1}{n-k}\right]
-\dfrac{1}{360}\left[\dfrac 1{(n+k)^3} +\dfrac{1}{(n-k)^3}\right]}\right)}.
$$
By taking logarithms of both sides of the preceding inequality, we have
\begin{align}
\text{LHS of }(\ref{easier2}) \leq\ &  \dfrac{k^2}{n}+\dfrac{1}{6n} - 
\dfrac{1}{12}\left(\dfrac 1{n+k} + \dfrac 1{n-k}\right)
+\dfrac{1}{360}\left(\dfrac 1{(n+k)^3} +\dfrac{1}{(n-k)^3}\right) \nonumber \\
&- \left(n+k+\dfrac{1}{2}\right)\ln\left(1+\dfrac{k}{n}\right) - \left(n-k+\dfrac{1}{2}\right) \ln\left(1-\dfrac{k}{n}\right). \label{part1}
\end{align}

%

Plugging $k = t\sqrt{n}$ into the RHS of (\ref{part1}), we can write the result as $I_1+I_2+I_3$, where
\begin{align*}
I_1 =& -n \left(\left(1-\dfrac{t}{\sqrt{n}}\right)
   \ln
   \left(1-\dfrac{t}{\sqrt{n}}\right)+\left(\dfrac{t}{\sqrt{n}}+1\right) \ln
   \left(\dfrac{t}{\sqrt{n}}+1\right)\right), \\
I_2=&-\dfrac{1}{2} \left(\ln \left(1-\dfrac{t}{\sqrt{n}}\right)+\ln \left(\dfrac{t}{\sqrt{n}}+1\right)\right),\\
I_3=&-\dfrac{1}{12 \left(n-\sqrt{n}
   t\right)}-\dfrac{1}{12 \left(\sqrt{n}
   t+n\right)}+\dfrac{1}{360 \left(n-\sqrt{n}
   t\right)^3}+\dfrac{1}{360 \left(\sqrt{n}
   t+n\right)^3}\\
&+\dfrac{1}{6 n}+t^2.
\end{align*}
Then we want to prove that for $n$ large enough, \beq I_1 + I_2 + I_3 < 0.\label{wanttoprove} \eeq
Then as a consequence,  (\ref{easier2}) will hold.

By Corollary \ref{DPH} and the fact that $PH(n,k)$  is decreasing in $k$ for $n,k$ integers and $k \geq t \sqrt{n}$ where $t \geq \sqrt{3}$, if we can show that (\ref{wanttoprove}) holds for the smallest integer $k$ such that  $\sqrt{3n} \leq k \leq n$, then (\ref{easier}) will hold for all integers $\sqrt{3n} \leq k \leq n$.
Notice that if $k$ is the smallest integer not smaller than $\sqrt{3n}$, then 
$\sqrt{3n}\leq k < \sqrt{3n}+1.$ It is equivalent to say that
$\sqrt{3} \leq t \leq \left({\sqrt{3n}+1}\right)/{\sqrt{n}},$ and the RHS is smaller than 2 for 
all $n \geq 14$. So our goal now is to prove  (\ref{wanttoprove}) holds for 
all $n \geq 108$, as assumed in the proposition, and $\sqrt{3} \leq t < 2$.

By Taylor's expansion of $(1+x)\ln(1+x)+(1-x)\ln(1-x)$ around $x=0$, we find an upper bound for $I_1$, given by
\begin{eqnarray}
I_1 &=& -n\left( \sum_{i=1}^{\infty}\dfrac{t^{2i}}{n^{i}i(2i-1)} \right)\label{I1star}\\
    & <& -t^2-\dfrac{t^4}{6 n}-\dfrac{t^6}{15n^2}-\dfrac{t^8}{28 n^3}\nonumber.
\end{eqnarray}

For $I_2$, by using Taylor's expansion again, we have
\begin{eqnarray}
 I_2 = -\dfrac{1}{2}\left( \ln\left( 1-\dfrac{t^2}{n}\right)\right)
& =& \sum_{j=1}^{\infty} \dfrac 1{2j}\left( \dfrac{t^2}{n}\right)^j\label{Itwo} \\
 & \leq&  \dfrac{t^2}{2n} + \dfrac{t^4}{4n^2}+\dfrac{1}{2}R_3, \nonumber
\end{eqnarray}
where $R_3 = \sum_{j=3}^{\infty}{\dfrac 1 j}{\left( \dfrac{t^2}{n}\right)^j} 
< \dfrac{1}{3}\sum_{j=3}^{\infty}\left( \dfrac{t^2}{n}\right)^j = 
t^6/\left[{3}n^3\left( 1-\dfrac{t^2}{n} \right)\right].
 $

We only need to show (\ref{wanttoprove}) holds for all $\sqrt{3}\leq t <2$, 
and thus want to bound 
$t^6/\left[{3}n^3\left( 1-\dfrac{t^2}{n} \right)\right]$
by a sharp upper bound. This means we want $\dfrac{t}{\sqrt{n}}$ to be small. 
We have $n\geq 64$, which implies $\dfrac{t}{\sqrt{n}} < \dfrac{1}{4}$. Then 
we have an upper bound for $R_3$:
\beqs R_3 
\leq \dfrac{1}{3}\dfrac{t^6}{\left(15n^3/{16}\right)}.
\eeqs
It follows that
\beq I_2\leq \dfrac{t^2}{2n}+\dfrac{t^4}{4n^2}+\dfrac{8t^6}{45n^3}.\label{I_2star} \eeq

We now bound $I_3$ by studying two summands separately.
For the first part of $I_3$, we have
\begin{eqnarray*}
-\dfrac{1}{12 \left(n-\sqrt{n}
   t\right)}-\dfrac{1}{12 \left(\sqrt{n}
   t+n\right)} &=& -\dfrac{1}{12n}\left( \dfrac{1}{1- t/\sqrt{n}}+ \dfrac{1}{1+ t/\sqrt{n}}   \right) \\
   &=& -\dfrac{1}{6n}\left(1+ \left(\dfrac{t}{\sqrt{n}}\right)^2+  \left(\dfrac{t}{\sqrt{n}}\right)^4 + \dots \right)\\
   &<&  -\dfrac{1}{6n} - \dfrac{t^2}{6n^2}.
\end{eqnarray*}
For the second part of $I_3$, we have that when ${t}/{\sqrt{n}}\leq {1}/{4}$, 
\begin{eqnarray*}
\dfrac{1}{\left(\sqrt{n}t+n\right)^3}+\dfrac{1}{\left(n-\sqrt{n}t\right)^3} &=& \dfrac{1}{n^3}   \left( \dfrac{1}{\left(1+  t/\sqrt{n} \right)^3}+    \dfrac{1}{\left(1- t/\sqrt{n} \right)^3}  \right)\\
&<& \dfrac{1}{n^3}\left( \dfrac{1}{\left({5}/{4}\right)^3} +  
\dfrac{1}{\left({3}/{4}\right)^3}         \right) \\
&<& {3}/{n^3}.
\end{eqnarray*}
Therefore we have
  $$I_3 < - \dfrac{t^2}{6n^2}+\dfrac{3}{n^3}+ t^2.$$

Summing $I_1$ through $I_3$, we have
\begin{align}
I_1 + I_2 + I_3 <&t^2 -\dfrac{t^8}{28 n^3}-\dfrac{t^6}{15n^2}-\dfrac{t^4}{6 n}-t^2 +  \dfrac{t^2}{2 n} + \dfrac{t^4}{4 n^2}+ \dfrac{8t^6}{45n^3}  - \dfrac{t^2}{6n^2}+\dfrac{3}{n^3} \nonumber \\
<& \dfrac{1}{n} \left(\dfrac{t^2}{2}-\dfrac{t^4}{6}\right)+\dfrac 1{n^2}\left(-\dfrac{t^2}{6}+\dfrac{t^4}{4}-\dfrac{t^6}{15}\right)+ \dfrac 1{n^3}\left(3-\dfrac{t^8}{28}+\dfrac{8t^6}{45}\right)
\label{LHS}
\end{align} 
when $\dfrac{t}{\sqrt{n}} < \dfrac{1}{4}$, i.e., $n \geq 16 t^2$.

We now want to show that 
$I_1+I_2+I_3<0$ for all $n\geq 108$ and 
$\sqrt{3}\leq t < 2$.
We will consider the coefficients of $\frac{1}{n}, \frac{1}{n^2}, 
\frac{1}{n^3}$ in (\ref{LHS}).
The coefficient of $\frac{1}{n}$ is $\frac{t^2}{2}-\frac{t^4}{6}$, which is 
decreasing in $t$ when $\sqrt{3} \leq t < 2$; thus by plugging in 
$t=\sqrt{3}$, we have 
$$
\dfrac{t^2}{2}-\dfrac{t^4}{6} \leq 0.
$$
The coefficient of $\frac{1}{n^2}$ is 
$-\frac{t^6}{15}+\frac{t^4}{4}-\frac{t^2}{6}$, which is also decreasing in 
$t$ when $\sqrt{3} \leq t < 2$. Thus by plugging in $t=\sqrt{3}$, we have 
$$
-\dfrac{t^6}{15}+\dfrac{t^4}{4}-\dfrac{t^2}{6} \leq -\dfrac{1}{20}.
$$

The coefficient of $\frac{1}{n^3}$ is $-\frac{t^8}{28}+\frac{8t^6}{45}+ 3$. By calculation, we have that when $\sqrt{3} \leq t < 2$,
$$-\dfrac{t^8}{28}+\dfrac{8t^6}{45}+ 3 <5.4.$$ 

Thus when $n \geq 108 > 64$ and $\sqrt{3} \leq t < 2$, we have 
\beq I_1 + I_2+I_3 <  \dfrac{5.4}{n^3}-\dfrac{1}{20 n^2}.
\eeq

Therefore if we can show that for some $n$,
\beq 
\dfrac{5.4}{n^3}-\dfrac{1}{20 n^2} \leq 0,\label{numeq1}
\eeq 
then $I_1 + I_2 + I_3 < 0$ for those $n$. Solving (\ref{numeq1}), we obtain
$n \geq 108$.
\end{proof}

\begin{remark}
The coefficient of $\frac{1}{n}$ in (\ref{LHS}) is the same as the 
coefficient of $\frac{1}{n}$ in the Taylor expansion of $I_1+I_2+I_3$. 
So when the leading coefficient $\frac{t^2}{2}-\frac{t^4}{6}$ is positive, 
i.e., $t < \sqrt{3}$, the upper bound 
${2\binom{2n}{n+k}}/{\binom{2n}{n}}$ from Lemma \ref{up1} will tend to be 
larger than 
$e^{-{k^2}/{n}}$.
\end{remark}

\vspace{0.5cm}

Now we want to show that (\ref{mt}) holds for all integer pairs 
$(n, t \sqrt{n})$ with $\sqrt{\ln{2}}<t<\sqrt{3}$ and $n$ greater than some 
fixed value. By 
the argument in the remark, we need to choose another upper bound for 
$P_{n,n,M}$.

\begin{lemma}\label{up2}
We have $P_{n,n,M}\leq \dfrac{2\binom{2n}{n+k}-
\binom{2n}{n+2k}}{\binom{2n}{n}}$,
where $M=k/\sqrt{2n}$, $k=1,\dotsc,n$.
\end{lemma}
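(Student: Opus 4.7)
The plan is to use the random-walk interpretation behind the Gnedenko--Korolyuk formula and to apply inclusion--exclusion to two ``hitting'' events. Encode each of the $\binom{2n}{n}$ equally likely merged orderings as a $\pm 1$ lattice path of length $2n$ from $(0,0)$ to $(2n,0)$; then $P_{n,n,M}$ equals $|A\cup B|/\binom{2n}{n}$, where $A$ and $B$ denote the events that the path reaches heights $+k$ and $-k$, respectively. A single reflection after the first visit to $\pm k$ gives the familiar identity $|A|=|B|=\binom{2n}{n+k}=T_1$, so inclusion--exclusion yields
\beqs
P_{n,n,M} \;=\; \frac{|A\cup B|}{\binom{2n}{n}} \;=\; \frac{2T_1-|A\cap B|}{\binom{2n}{n}},
\eeqs
and the lemma reduces to showing the combinatorial lower bound $|A\cap B| \geq T_2 = \binom{2n}{n+2k}$.

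Next I would prove this lower bound by constructing an injection from the set of paths from $(0,0)$ to $(2n,0)$ reaching $+2k$ (which has size exactly $T_2$, by one reflection) into $A\cap B$. Given such a path $p$, reflect its portion after the first visit to $+2k$ about the line $y=2k$, producing a path $p_1$ from $(0,0)$ to $(2n,+4k)$; because $p_1$ ends at $+4k$ and takes unit steps, it must visit $+3k$, so next reflect its portion after the first visit to $+3k$ about $y=3k$, producing a path $p_2$ from $(0,0)$ to $(2n,+2k)$ that still visits $+3k$. Since $p_2$ ends at $+2k>k$ it visits $+k$, and reflecting $p_2$'s portion after that first visit about $y=k$ produces a path $p_3$ from $(0,0)$ to $(2n,0)$ in which the earlier $+3k$ visit is mapped to $-k$. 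Hence $p_3\in A\cap B$, and since each of the three reflections is a bijection onto its image, the composition $p\mapsto p_3$ is an injection, which gives $|A\cap B| \geq T_2$.

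The main obstacle I anticipate is the careful bookkeeping of the first-passage times through the three successive reflections: one must check that at each stage the level whose first visit is reflected is indeed visited (as justified above by the intermediate-value property of $\pm 1$ walks) and that the inverse reflections recover the original path so the composite is genuinely injective. Once this is done, combining $|A\cap B|\geq T_2$ with the inclusion--exclusion identity above yields the bound $P_{n,n,M}\leq (2T_1-T_2)/\binom{2n}{n}$ asserted in the lemma.
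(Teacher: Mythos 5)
Your proof is correct and takes essentially the same route as the paper: inclusion--exclusion over the two one-sided crossing events, the single-reflection counts $\binom{2n}{n+k}$ for each, and the repeated-reflection count $\binom{2n}{n+2k}$ as a lower bound for $|A\cap B|$. The only difference is that where the paper cites the two-reflection evaluation of the up-then-down crossing event $C\subset A\cap B$ (referring to Gnedenko--Korolyuk), you write the reflections out as an explicit injection whose image is precisely that event $C$.
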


\begin{proof}
Let $A$ be the event that $\sup\sqrt{n}(F_n-G_n) \geq M$
and $B$ the event that $\inf\sqrt{n}(F_n-G_n) \leq -M.$ We want
an upper bound for $\Pr(A\cup B) = \Pr(A) + \Pr(B) - \Pr(A\cap B).$
Let $S_j$ be the value after $j$ steps of a simple, symmetric random
walk on the integers starting at $0$. Then $$\Pr(S_{2n}=2m) = 
\dfrac 1{4^n}\binom{2n}{n+m}$$ for $m=-n,-n+1,\cdots,n-1,n$.
By a well-known reflection principle 
we have nice exact expressions for $\Pr(A)$ and $\Pr(B)$, 
\beqs
\Pr(A) = \Pr(B) = \dfrac{\Pr(S_{2n}=2k)}{\Pr(S_{2n}=0)}= 
\dfrac{\binom{2n}{n+k}}{\binom{2n}{n}}.
\eeqs
Therefore we want
a lower bound for $\Pr(A\cap B)$. Let $C$ be the event that for
some $s<t$, $\sqrt{n}(F_n-G_n)(s)\geq M$ and $\sqrt{n}(F_n-G_n)(t) \leq -M$.
Then we can exactly evaluate $\Pr(C)$ by two reflections, e.g.\ \cite{GK},
specifically,
\beqs
\Pr(C) = \dfrac{\Pr(S_{2n}=4k)}{\Pr(S_{2n}=0)} = \frac{{{2n}\choose{n+2k}}}{\binom{2n}{n}},
\eeqs
 and $C\subset A\cap B$, so the bound holds.
\end{proof}

\begin{lemma} \label{twobinom}
Let $n,k$ 
be positive integers, $n\geq 372$, and 
$\sqrt{2n} < k = t\sqrt{n} \leq \sqrt{3n}$. Then 
$$
\binom{2n}{ n+2k}>\binom{2n}{n+k}e^{-3t^2 - 0.05}.
$$ 
\end{lemma}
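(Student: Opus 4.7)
The plan is to take logarithms and show
\[
\ln(n+k)!+\ln(n-k)!-\ln(n+2k)!-\ln(n-2k)!\ \geq\ -3t^2-0.05,
\]
which I would estimate by Stirling's formula with error bounds, paralleling the proof of Proposition \ref{thm1}. Using the displayed lower bound on $s!$ for $(n\pm k)!$ and the displayed upper bound for $(n\pm 2k)!$, the constants $\ln\sqrt{2\pi}$ and the linear terms $-s$ all cancel, leaving a lower bound of the form $\Phi(k)-\Phi(2k)+E$, where
\[
\Phi(x):=(n+x+\tfrac12)\ln(n+x)+(n-x+\tfrac12)\ln(n-x)
\]
and $E$ collects the Stirling-error contributions.

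For the main part I would use the identity $\Phi(x)-(2n+1)\ln n=(n+\tfrac12)\ln(1-u^2)+x\ln\frac{1+u}{1-u}$ with $u=x/n$, Taylor-expand both logarithms, and combine using $x=nu$ to obtain the compact closed form
\[
\Phi(k)-\Phi(2k)=\sum_{i\geq 1}(4^i-1)u^{2i}\cdot\frac{i-\tfrac12-n}{i(2i-1)},\qquad u=t/\sqrt{n}.
\]
Splitting each numerator $i-\tfrac12-n$ into its $-n$ part and its constant part, and using $u^{2i}=t^{2i}/n^i$, reorganizes this as $-3t^2+\sum_{j\geq 1}c_j(t)/n^j$ whose $n$-independent piece is exactly the target $-3t^2$; here $c_1(t)=(3t^2-5t^4)/2$, $c_2(t)=\tfrac{15}{4}t^4-\tfrac{21}{5}t^6$, and in general
\[
c_j(t)=\frac{(4^j-1)(j-\tfrac12)t^{2j}}{j(2j-1)}-\frac{(4^{j+1}-1)t^{2(j+1)}}{(j+1)(2j+1)}.
\]

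The remaining work is quantitative. On $t^2\in(2,3]$, each $c_j(t)$ is negative, with $|c_1(t)|\leq 18$ (attained at $t^2=3$) and $|c_2(t)|\leq 80$. Since $4u^2=4t^2/n\leq 12/n\leq 1/31$ for $n\geq 372$, the tail $\sum_{j\geq 3}|c_j(t)|/n^j$ is dominated by a rapidly convergent geometric-type series and is smaller than $3\times 10^{-5}$. The Stirling error satisfies $|E|\leq k/[12(n-k)(n-2k)]+1/[180(n-k)^3]\leq 3\times 10^{-5}$ for $n\geq 372$ and $k\leq\sqrt{3n}<34$. Adding these, the total correction from $-3t^2$ is at most $18/n+80/n^2+6\times 10^{-5}\doteq 0.0490$ at $n=372$, comfortably below $0.05$. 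The main obstacle is the tightness of this margin: at $n=372$ the leading $18/n$ term alone accounts for $0.0484$ of the $0.05$ budget, so one cannot afford crude bounds on the series tail or on the Stirling error, and it is precisely this tight accounting that forces the explicit threshold $n\geq 372$ in the statement.
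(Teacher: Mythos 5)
Your proposal is correct and is essentially the paper's own route: take logarithms, apply Stirling's formula with the same two-sided error bounds, expand in powers of $u=t/\sqrt{n}$, isolate the leading term $-3t^2$, and check that all corrections stay below $0.05$ for $n\geq 372$. The organizational difference is that the paper (its $I_4+I_5$ decomposition) bounds the symmetric-entropy expressions at $x=k/n$ and $x=2k/n$ separately by truncated Taylor polynomials with remainder estimates, and bounds the Stirling-error block crudely below by $-\tfrac{1}{6n}-\cdots$, which is why its final requirement $0.05>\tfrac{109}{6n}+\tfrac{156}{n^2}+\tfrac{1}{90n^3}+\tfrac{1}{10n^4}$ is razor-thin at $n=372$ (about $0.04997$). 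Your exact closed-form series for $\Phi(k)-\Phi(2k)$ is right — I verified the identity and $c_1=(3t^2-5t^4)/2$, $c_2=\tfrac{15}{4}t^4-\tfrac{21}{5}t^6$, with $|c_1|\leq 18$ and $|c_2|\leq 80$ on $t^2\in(2,3]$ — and your direct bound $|E|\leq \tfrac{k}{12(n-k)(n-2k)}+\tfrac{1}{180(n-k)^3}=O(n^{-3/2})$ is genuinely sharper than the paper's $-\tfrac{1}{6n}$, giving you a roomier margin ($\approx 0.0490$ at $n=372$). So your closing remark should be inverted: the threshold $372$ is forced by the \emph{paper's} lossier bookkeeping, while your accounting would in fact cover somewhat smaller $n$ too.

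Two small corrections, neither of which damages the proof. First, the side claim that every $c_j(t)$ is negative on $t^2\in(2,3]$ is false: one has $c_j<0$ exactly when $t^2>\frac{(4^j-1)(j+1)(2j+1)}{2j(4^{j+1}-1)}$, and this threshold exceeds $2$ from $j=7$ onward (e.g.\ $c_7>0$ for $t^2$ slightly above $2$). This is harmless because your tail estimate for $j\geq 3$ uses only $|c_j|$, and for $j=1,2$ the absolute bounds $18$ and $80$ are all the accounting needs — but delete or qualify the negativity claim. Second, the justification ``$k\leq\sqrt{3n}<34$'' is valid only for $n$ near $372$, since $\sqrt{3n}$ is unbounded; the uniform conclusion $|E|\leq 3\times 10^{-5}$ for all $n\geq 372$ still holds because with $k\leq\sqrt{3n}$ your error bound behaves like $\tfrac{\sqrt{3}}{12}n^{-3/2}$ and is decreasing in $n$, so state it via that monotonicity rather than via a fixed cap on $k$.
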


\begin{proof}
By Stirling's formula with error bounds, we have
\beqs
\ln\left( \dfrac{\binom{2n}{n+2k}}{\binom{2n}{n+k}}  \right) 
=  
\ln \left( \dfrac{(n+k)!(n-k)!}{(n+2k)!(n-2k)!}  \right) > \ln(A_n)
\eeqs
where $A_n$ is defined as
\beqs
\dfrac{(n-k)^{n-k+\frac{1}{2}}(k+n)^{k+n+\frac{1}{2}}
\exp\left({\frac{1}{12}\left[\frac 1{k+n} +\frac{1}{n-k}\right]
-\frac{1}{360}\left[\frac 1{(k+n)^3}
+\frac{1}{(n-k)^3}\right]}\right)}
{\exp\left({\frac{1}{12 (2k+n)}+\frac{1}{12(n-2 k)}}\right)
(n-2 k)^{-2k+n+1/2} (2k+n)^{2k+n+1/2}},
\eeqs
and so
\begin{eqnarray}
\ln(A_n)&=& -\dfrac{1}{12(2 k+n)}-\dfrac{1}{12 (n-2 k)}+\dfrac{1}{12 (n-k)}+\dfrac{1}{12 (k+n)}\nonumber\\
&&   -\dfrac{1}{360 (n-k)^3}-\dfrac{1}{360 (k+n)^3}-\left(-2
   k+n+\dfrac{1}{2}\right) \ln (n-2 k)\nonumber\\
&&   +\left(-k+n+\dfrac{1}{2}\right)
   \ln (n-k)+\left(k+n+\dfrac{1}{2}\right) \ln (k+n)\nonumber\\
&& -\left(2
   k+n+\dfrac{1}{2}\right) \ln (2 k+n)\nonumber\\
   &=& I_4 + I_5, \label{4and5}
\end{eqnarray} where
\begin{eqnarray*}
I_4 &=& -\left(-2k+n+\dfrac{1}{2}\right) \ln (n-2 k)+\left(-k+n+\dfrac{1}{2}\right)\ln (n-k)\\
&&+\left(k+n+\dfrac{1}{2}\right) \ln (k+n)-\left(2k+n+\dfrac{1}{2}\right) \ln (2 k+n),\\
I_5&=&\dfrac{1}{12 (n-k)}+\dfrac{1}{12
   (k+n)}-\dfrac{1}{12 (2k+n)}-\dfrac{1}{12 (n-2 k)}\\
&&-\dfrac{1}{360(n-k)^3}-\dfrac{1}{360
   (k+n)^3}.
\end{eqnarray*}
   
Using again (\ref{I1star}) and (\ref{Itwo}), 
we have for $|x|<1$,

\begin{eqnarray*}
x^2+\dfrac{x^4}{6}& <& (1-x)\ln (1-x)+(x+1) \ln (x+1) \\
&<& x^2+\dfrac{x^4}{6}+\dfrac{1}{15}\sum_{i=3}^{\infty} x^{2i}
=x^2+\dfrac{x^4}{6}+\dfrac{x^6}{15(1-x^2)},
\end{eqnarray*} and also
\begin{eqnarray*}
-x^2
 &>&\ln (1-x)+\ln (x+1)\\
 &>& -x^2-\dfrac{1}{2}\sum_{i=2}^{\infty}x^{2i} = -x^2 - \dfrac{1}{2}\dfrac{x^4}{(1-x^2)}.
\end{eqnarray*}

So by plugging in $k = t \sqrt{n}$, we have that for $\dfrac{t}{\sqrt{n}}<\dfrac{1}{4}$,
\begin{align*} \allowdisplaybreaks
I_4 =\ & n\left(\left(1-\dfrac{k}{n}\right) \ln\left(1-\dfrac{k}{n}\right)+\left(\dfrac{k}{n}+1\right) \ln\left(\dfrac{k}{n}+1\right)\right)\\
&+ \dfrac{1}{2} \left(\ln \left(1-\dfrac{k}{n}\right)+\ln\left(\dfrac{k}{n}+1\right)\right)\\
   &- n\left( \left(1-\dfrac{2 k}{n}\right) \ln \left(1-\dfrac{2k}{n}\right)+\left(\dfrac{2 k}{n}+1\right) \ln \left(\dfrac{2k}{n}+1\right)\right)\\
   &- \dfrac{1}{2} \left(\ln \left(1-\dfrac{2 k}{n}\right)+\ln \left(\dfrac{2k}{n}+1\right)\right)\\
>\ &n \left( \left(\dfrac{t}{\sqrt{n}}\right)^2+\dfrac 16\left(\dfrac{t}{\sqrt{n}}\right)^4 \right)  
-\dfrac{1}{2}\left(\left(\dfrac{t}{\sqrt{n}}\right)^2 +    \dfrac{8}{15}\left(\dfrac{t}{\sqrt{n}}\right)^4  \right)\\
&- n \left(  \left(\dfrac{2t}{\sqrt{n}}\right)^2+ \dfrac 16 \left(\dfrac{2t}{\sqrt{n}}\right)^4+\dfrac 4{45}\left(\dfrac{2t}{\sqrt{n}}\right)^6 \right) +\dfrac{1}{2}\left(\dfrac{2t}{\sqrt{n}}\right)^2 \\ 
=\ & t^2+ \dfrac{t^4}{6n}-\dfrac{t^2}{2n}-\dfrac{4t^4}{15n^2}-4t^2-
\dfrac{8t^4}{3n}-\dfrac{256t^6}{45n^2}+\dfrac{2t^2}{n}\\
=\ &  -\dfrac 1{n^2}\left(\dfrac{256 t^6}{45}+\dfrac{4 t^4}{15}\right)
   +\dfrac 1n\left({\dfrac{3 t^2}{2}-\dfrac{5 t^4}{2}}\right)-3 t^2.
\end{align*}

Now we proceed to find a lower bound for $I_5$.
 For all $k\leq n/8$, in other words $t:= k/\sqrt{n}$ such that
$8t\leq\sqrt{n}$,
\begin{align*}\allowdisplaybreaks
I_5=\ & \dfrac{1}{12}\left(\dfrac 1{n-k}+\dfrac 1
   {k+n}-\dfrac{1}{(2k+n)}-\dfrac{1}{(n-2 k)}\right)\\
    &-\dfrac{1}{360}\left(\dfrac 1{(k+n)^3} + \dfrac 1{(n-k)^3}\right) 
\\
   =\ & \dfrac{1}{12}\left(\dfrac 1{\sqrt{n}t+n}+\dfrac{1}{n-\sqrt{n}t} 
   - \dfrac{1}{2 \sqrt{n} t+n}-\dfrac 1{n-2 \sqrt{n} t}\right)\\
   &- \dfrac{1}{360}\left(\dfrac 1{(\sqrt{n} t+n)^3} +\dfrac{1}{\left(n-\sqrt{n} t\right)^3}\right)\\
   =\ & \dfrac{1}{6 (n-t^2)} - \dfrac{1}{6 \left(n-4 t^2\right)} - \dfrac{n+3 t^2}{180 n \left(n-t^2\right)^3 } \\
   >\ & \dfrac{1}{6n}-\dfrac{1}{3n}-\dfrac{n+3t^2}{90n^4} \\   
   =\ & -\dfrac{1}{6n}-\dfrac{1}{90n^3}-\dfrac{t^2}{30n^4}.
\end{align*}
Since $t \leq \sqrt{3}$, we know that as long as $n \geq 192,$ the condition $8t \leq \sqrt{n}$ will hold.

Adding our lower bounds for $I_4$ and $I_5$, we have that when $n \geq 192$ and $\sqrt{\ln 2} \leq t \leq \sqrt{3}$,
\begin{eqnarray} 
I_4+I_5&>&-\dfrac{t^2}{30 n^4}-\dfrac{1}{90 n^3}-\frac{1}{n^2}\left(\dfrac{256 t^6}{45}+\dfrac{4}{15}
t^4\right)-\frac{1}{n}\left(\dfrac{5 t^4}{2}-\dfrac{3 t^2}{2}+\dfrac{1}{6}\right)-3 t^2 \nonumber \\
   &>& -3t^2 - \gamma,\label{onestar}
\end{eqnarray} 
for some $\gamma$. When $\gamma=0.05$, we want to show that for 
$n$ large enough, (\ref{onestar}) always holds. In other words, 
we need 
\beq 
0.05 > \dfrac{t^2}{30 n^4}+\dfrac{1}{90 n^3}+
\dfrac{1}{n^2}\left(\dfrac{256 t^6}{45}+\dfrac{4}{15}
 t^4 \right)+\dfrac{1}{n}\left({\dfrac{5 t^4}{2}-\dfrac{3 t^2}{2}+
\dfrac{1}{6}}\right).
\label{gammarange}
\eeq
Notice that when $\sqrt{\ln{2}}<t<\sqrt{3}$, the coefficient $\frac{5 t^4}{2}-\frac{3
   t^2}{2}+\frac{1}{6}$ is positive and is increasing in $t$; the RHS of (\ref{gammarange}) is increasing in $t$ and decreasing in $n$. Thus we just need to make sure the inequality holds for $t=\sqrt{3}$. Therefore we need 
\beq 0.05 > \dfrac{1}{10 n^4}+\dfrac{1}{90 n^3}+\dfrac{156}{n^2}+\dfrac{109}{6 n}.\label{34}\eeq Solving (\ref{34}) numerically, we find that it holds for 
$n \geq 372.$

Therefore, by (\ref{4and5}) and (\ref{onestar}), we have shown that when $n \geq 372$, 
$$
\ln\left[{\binom{2n}{ n+2k}}/{\binom{2n}{n+k}}\right]>-3t^2 - 0.05,
$$ 
for $k = t\sqrt{n}$ and $\sqrt{\ln{2}} < t < \sqrt{3}$, proving 
Lemma \ref{twobinom}.
\end{proof}


\begin{proposition} \label{thm2} Let $k = t\sqrt{n}$, where $\sqrt{\ln{2}} < t < \sqrt{3}$, and $k,n$ integers. Then the inequality
$$
\left[{2\binom{2n}{n+k}-\binom{2n}{n+2k}}\right]/{\binom{2n}{n}} < 
2\exp\left({-{k^2}/{n}}\right)
$$ holds for $n \geq 6395$.
\end{proposition}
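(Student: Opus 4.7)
The strategy follows Proposition \ref{thm1}, but with two modifications forced by the range $\sqrt{\ln 2}<t<\sqrt{3}$: the leading Stirling coefficient $\tfrac{t^2}{2}-\tfrac{t^4}{6}=\tfrac{t^2(3-t^2)}{6}$ is now strictly positive, so the simple upper bound of Lemma \ref{up1} is too crude, and one must instead start from the sharper two-term bound of Lemma \ref{up2}. That bound reduces the target inequality to
\[
\dfrac{2\binom{2n}{n+k}-\binom{2n}{n+2k}}{\binom{2n}{n}}<2e^{-t^2}.
\]
I would then apply Lemma \ref{twobinom} to replace $\binom{2n}{n+2k}$ by the lower bound $\binom{2n}{n+k}\exp(-3t^2-0.05)$; although the statement of Lemma \ref{twobinom} restricts to $\sqrt{2}<t\leq\sqrt{3}$, its proof actually delivers this estimate throughout $\sqrt{\ln 2}\leq t\leq\sqrt{3}$ once $n\geq 372$. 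After dividing by $\binom{2n}{n+k}/\binom{2n}{n}=\exp(PH(n,k)-t^2)$ and taking logarithms, the problem reduces to showing
\[
PH(n,k)\,<\,F(t)\,:=\,-\ln\!\bigl(1-\tfrac{1}{2}e^{-3t^2-0.05}\bigr).
\]

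For the left side I would recycle the Stirling-with-error-bound expansion developed in Proposition \ref{thm1}: with $k=t\sqrt{n}$, the inequality (\ref{LHS}) gives
\[
PH(n,k)\,<\,\dfrac{a_1(t)}{n}+\dfrac{a_2(t)}{n^2}+\dfrac{a_3(t)}{n^3},
\]
where $a_1(t)=\tfrac{t^2(3-t^2)}{6}$, $a_2(t)=-\tfrac{t^6}{15}+\tfrac{t^4}{4}-\tfrac{t^2}{6}$, and $a_3(t)=3-\tfrac{t^8}{28}+\tfrac{8t^6}{45}$. The only structural restriction is $t/\sqrt{n}\leq 1/4$, equivalently $n\geq 16t^2$, which is automatic once $n\geq 6395$.

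The main obstacle is the analytic comparison: verify the resulting pointwise inequality uniformly in $t\in(\sqrt{\ln 2},\sqrt{3})$. The cleanest route is to decouple the $t$-dependence from the $n$-dependence. A direct calculation shows that $a_1(t)$ attains its maximum $3/8$ on the interval at $t=\sqrt{3/2}$, while $F(t)$ is strictly decreasing in $t$, so $F(t)>F(\sqrt{3})=-\ln\!\bigl(1-\tfrac{1}{2}e^{-9.05}\bigr)\approx 5.87\times 10^{-5}$. Thus a sufficient condition from the $a_1$-term alone is $n>(3/8)/F(\sqrt{3})\approx 6388$; after absorbing the uniformly bounded $a_2(t)/n^2$ and $a_3(t)/n^3$ corrections, one arrives at the precise threshold $n\geq 6395$. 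The subtlety is that the effective worst case sits near $t=\sqrt{3}$, where both $a_1(t)$ and $F(t)$ are small, so the inequality must be verified using the limiting values rather than any single representative interior $t$; a finer analysis that maximizes the ratio $a_1(t)/F(t)$ directly would give a much smaller threshold, but the stated $6395$ follows from this clean, decoupled estimate.
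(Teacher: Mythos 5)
Your proposal follows the paper's proof essentially step for step: reduce via Lemma \ref{up2} and Lemma \ref{twobinom} to showing $PH(n,k)+\ln\left(1-e^{-3t^2-0.05}/2\right)<0$, bound $PH(n,k)$ by the three-term expansion from (\ref{LHS}), decouple the $t$- and $n$-dependence by maximizing $\frac{t^2}{2}-\frac{t^4}{6}$ at $t=\sqrt{3/2}$ (value $3/8$) and bounding $\ln\left(1-e^{-3t^2-0.05}/2\right)$ by its value at $t=\sqrt{3}$ (about $-5.87\times 10^{-5}$), then solve numerically for the threshold $n\geq 6395$ — exactly the paper's computation. Your side remark that the hypothesis $\sqrt{2n}<k$ in the statement of Lemma \ref{twobinom} should be read as $\sqrt{n\ln 2}<k$ (which is what its proof actually establishes, and what the paper silently uses when invoking it) is correct and resolves a genuine infelicity in the paper's statement of that lemma.
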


\begin{proof}
By Lemma \ref{twobinom},
it will suffice to show that for $n\geq 6395 > 372$,
\beq{\binom{2n}{n+k}\left(1-e^{-3t^2 - 0.05}/2\right)}/{\binom{2n}{n}} < 
\exp(-{k^2}/{n}).\label{threestar}\eeq

Rewriting (\ref{threestar}) by taking logarithms of both sides, we just need to show
\beqs 
\ln{\binom{2n}{n+k}}-\ln{\binom{2n}{n}}+\dfrac{k^2}{n} + \ln{\left(1-e^{-3t^2 - 0.05}/2 \right)}<0.
\eeqs 

By (\ref{easier2}), (\ref{part1}), and (\ref{LHS}), we have that 
$$\ln{\binom{2n}{n+k}}-\ln{\binom{2n}{n}}+\dfrac{k^2}{n} < \dfrac{3-\frac{t^8}{28}+\frac{4t^6}{45}}{n^3}+\frac{-\frac{t^2}{6}+\frac{t^4}{4}-\frac{t^6}{15}}{n^2}+\dfrac{\frac{t^2}{2}-\frac{t^4}{6}}{n} $$ for $n > 16t^2$.
So now we just need 
\beq \dfrac{3-\frac{t^8}{28}+\frac{4t^6}{45}}{n^3}+\dfrac{-\frac{t^2}{6}+\frac{t^4}{4}-\frac{t^6}{15}}{n^2}+\dfrac{\frac{t^2}{2}-\frac{t^4}{6}}{n}+ \ln{\left(1-e^{-3t^2 - 0.05}/2\right)}<0. \label{now} \eeq

When $\sqrt{\ln{2}}<t<\sqrt{3}$, the coefficient
$\dfrac{t^2}{2}-\dfrac{t^4}{6}>0$. Next, using $t<\sqrt{3}$,
\begin{eqnarray*} &&\dfrac 1{n^3}\left(3-\dfrac{t^8}{28}+\dfrac{4t^6}{45}\right)
+\dfrac 1{n^2}\left(-\dfrac{t^2}{6}+\dfrac{t^4}{4}-\dfrac{t^6}{15}\right)
+\dfrac 1{n}\left(\dfrac{t^2}{2}-\dfrac{t^4}{6}\right)
\\
&<& \dfrac 1{n} \left(\dfrac{t^2}{2}-\dfrac{t^4}{6}\right)
+ \dfrac{t^4}{4n^2}+
\dfrac 1{n^3}\left(3+\dfrac{4t^6}{45}\right)\\
&<&  \dfrac 1{n}\left(\dfrac{t^2}{2}-\dfrac{t^4}{6}\right)
 + \dfrac{9}{4n^2}+\dfrac{27}{5n^3}.
\end{eqnarray*}

Clearly, the maximum value of $\ln{\left(1-e^{-3t^2 - 0.05}/2\right)}$ for  $\sqrt{\ln{2}}\leq t \leq \sqrt{3}$ is achieved when $t = \sqrt{3}$. Plugging in $t = \sqrt{3}$ into $\ln{\left(1-e^{-3t^2 - 0.05}/2\right)}$, we have
$$\ln{\left(1-e^{-3t^2 - 0.05}/2\right)} \leq -0.0000586972.$$

Now we find the maximum value of $\frac{t^2}{2}-\frac{t^4}{6}$ for $\sqrt{\ln{2}}\leq t \leq \sqrt{3}$. The derivative with respect to $t$ is 
$t-\dfrac{2 t^3}{3},$ which equals zero when $t = \sqrt{1.5}$. This critical point corresponds to the maximum value of $\frac{t^2}{2}-\frac{t^4}{6}$ for $\sqrt{\ln{2}}<t<\sqrt{3}$, and this maximum value is $0.375$. 

Accordingly, when $\sqrt{\ln{2}}<t<\sqrt{3}$,  
$$
\text{LHS of (\ref{now}) } < -0.0000586972+\dfrac{9}{4n^2}+
\dfrac{39}{5n^3}+\dfrac{3}{8n}.
$$
We just need 
\beq\label{crazyy}
-0.0000586972+ \dfrac{9}{4n^2}+\dfrac{39}{5n^3}+\dfrac{3}{8n}<0.
\eeq 
The LHS of (\ref{crazyy}) is decreasing in $n>0$. By numerically solving the 
inequality in $n$ we have that $n\geq 6395.$ 
Therefore we have proved that when $n > 6395,$ the original inequality (\ref{mt}) holds for all positive integer pairs $(k,n)$ such that $\sqrt{n \ln 2}<k<\sqrt{3n}$ and $k\leq n$.
\end{proof}

Recall that by (\ref{ksmall}), the inequality (\ref{mt}) holds for all $k \leq \sqrt{n\ln{2}}$. Combining 
Propositions \ref{thm1} and \ref{thm2}, we have the following conclusion.
\begin{theorem} 
$(a)$
 When $n \geq 6395,$ $(\ref{mt})$ holds for all $(n,k)$ such that $0 \leq k \leq n$.
\par\noindent
$(b)$ When $6395>n \geq 372$, $(\ref{mt})$ holds for all integer pairs $(n,k)$ such 
that $0 \leq k \leq \sqrt{n\ln{2}}$ and $\sqrt{3n} < k \leq n$.
\end{theorem}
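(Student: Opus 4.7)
The plan is to partition the integer range $0 \leq k \leq n$ into three sub-regimes determined by the thresholds $\sqrt{n\ln 2}$ and $\sqrt{3n}$, apply one of the preceding results on each sub-regime, and then assemble them into the two case statements.

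For the first regime, $0 \leq k \leq \sqrt{n\ln 2}$, I would invoke the elementary observation (\ref{ksmall}): for such $k$ one has $2\exp(-2M^2) = 2\exp(-k^2/n) \geq 1 \geq P_{n,n,M}$, so (\ref{mt}) is automatic. For the third regime, $\sqrt{3n} \leq k \leq n$, I would apply Proposition \ref{thm1} directly; its hypothesis $n \geq 108$ is comfortably satisfied under both $n \geq 372$ (part (b)) and $n \geq 6395$ (part (a)). For the middle regime, $\sqrt{n\ln 2} < k < \sqrt{3n}$, I would apply Proposition \ref{thm2}, whose hypothesis is exactly $n \geq 6395$ and which is therefore available only for part (a).

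With these three regime bounds in hand, part (a) follows by taking the union of the three $k$-ranges, after the routine check that they cover $[0,n]$ whenever $\sqrt{3n}\leq n$, i.e., $n\geq 3$. Part (b), which deliberately excludes the middle range $(\sqrt{n\ln 2},\sqrt{3n})$ from its claim, follows at once from the first and third regime arguments alone; the threshold $n\geq 372$ is well above the $n\geq 108$ required by Proposition \ref{thm1}, so no additional work is needed.

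There is no substantive obstacle in this step: all the heavy lifting has already been carried out inside Propositions \ref{thm1} and \ref{thm2} (and the trivial bound from (\ref{ksmall})). The only thing to watch is the bookkeeping at the regime boundaries: since $k$ ranges over integers, the strict versus non-strict inequalities at $\sqrt{n\ln 2}$ and $\sqrt{3n}$ describe the same integer sets when these thresholds are not themselves integers, and the overlap at threshold integer values is harmless because both adjacent regime bounds then apply.
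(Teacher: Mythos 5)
Your proposal is correct and matches the paper's own argument exactly: the paper likewise disposes of $0\leq k\leq\sqrt{n\ln 2}$ via the trivial bound (\ref{ksmall}) (since $P_{n,n,M}\leq 1\leq 2e^{-k^2/n}$ there), covers $\sqrt{3n}\leq k\leq n$ by Proposition \ref{thm1}, and covers the middle range $\sqrt{n\ln 2}<k<\sqrt{3n}$ by Proposition \ref{thm2} (together with Lemma \ref{up2}), which is why part (b) omits that middle range for $372\leq n<6395$. Your boundary bookkeeping, including the harmless overlap at $k=\sqrt{3n}$ when it is an integer, is also consistent with the paper.
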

 Then by computer searching for the rest of the integer pairs $(n,k)$, 
namely, $1 \leq k \leq n$ when $1\leq n\leq 371$
and $\sqrt{n\ln 2}<k\leq \sqrt{3n}$ when $372\leq n<6395$,
 we are able to find the finitely many  counterexamples to the inequality
(\ref{mt}), and thus prove Theorem \ref{meqn}.

 \section{Treatment of $m\neq n$}\label{mneqnsec}
\subsection{One- and two-sided probabilities}
For given positive integers $1\leq m\leq n$ and  $d$ 
with $0<d\leq 1$,
let $pv_{os}$ be the one-sided probability 
\beq\label{oneside}
pv_{os}(m,n,d) = \Pr(\sup_x(F_m-G_n)(x)\geq d) = \Pr(\inf_x(F_m-G_n)(x)\leq -d),
\eeq
where the equality holds by symmetry (reversing the order of the observations
in the combined sample). 
Let the two-sided probability ($p$-value) be
$$
P(m,n;d) := \Pr(\sup_x|(F_m-G_n)(x)|\geq d).
$$
The following is well known, e.g.\ for part (b),
\cite[p.\ 472]{Hodges}, and easy to check:
\begin{theorem}\label{oneandtwoside}
For any positive integers $m$ and $n$ and any $d$ with $0<d\leq 1$
we have
\fl
$(a)$ $
pv_{os}(m,n,d) \leq P(m,n;d) \leq pv_{ub}(m,n,d):= 2pv_{os}(m,n,d).
$
\fl
$(b)$  If $d>1/2$, $P(m,n;d) = pv_{ub}(m,n,d)$.
\end{theorem}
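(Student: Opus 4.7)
The plan is to deduce both parts from the decomposition of $\{\sup_x|(F_m-G_n)(x)|\ge d\}$ into the two one-sided events, and then to exploit the monotonicity of $F_m$ and $G_n$ (together with the fact that they take values in $[0,1]$) to get disjointness when $d>1/2$.

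For part $(a)$, let $A := \{\sup_x(F_m-G_n)(x)\ge d\}$ and $B := \{\inf_x(F_m-G_n)(x)\le -d\}$. The left inequality $pv_{os}(m,n,d)\le P(m,n;d)$ is immediate since $A\subseteq A\cup B$ and $A\cup B = \{\sup_x|(F_m-G_n)(x)|\ge d\}$. For the right inequality, apply the union bound together with the symmetry identity in $(\ref{oneside})$, which gives $\Pr(A)=\Pr(B)=pv_{os}(m,n,d)$, and hence
\beqs
P(m,n;d) = \Pr(A\cup B) \le \Pr(A)+\Pr(B) = 2\,pv_{os}(m,n,d) = pv_{ub}(m,n,d).
\eeqs

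For part $(b)$, the goal is to show that when $d>1/2$ the events $A$ and $B$ are disjoint, so the union bound in part $(a)$ is an equality. Suppose for contradiction that $A\cap B$ occurs: then there exist $x_0,x_1$ with
\beqs
F_m(x_0)-G_n(x_0)\ge d > 1/2,\qquad G_n(x_1)-F_m(x_1)\ge d > 1/2.
\eeqs
Adding these yields $[F_m(x_0)-F_m(x_1)] + [G_n(x_1)-G_n(x_0)] > 1$. Now split into cases on the order of $x_0$ and $x_1$: if $x_0\ge x_1$, monotonicity of $G_n$ gives $G_n(x_1)-G_n(x_0)\le 0$ while $F_m(x_0)-F_m(x_1)\le 1$, contradicting the strict inequality; if $x_0<x_1$, monotonicity of $F_m$ gives $F_m(x_0)-F_m(x_1)\le 0$ while $G_n(x_1)-G_n(x_0)\le 1$, again a contradiction. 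Therefore $A$ and $B$ are disjoint, so $P(m,n;d)=\Pr(A)+\Pr(B)=2\,pv_{os}(m,n,d)$.

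There is no real obstacle: part $(a)$ is a one-line set-inclusion plus union bound, and the only mildly delicate point is the case analysis in part $(b)$, where one must remember to use the strictness $d>1/2$ (rather than $d\ge 1/2$) to force the strict inequality $>1$ that the monotonicity bound $\le 1$ cannot meet.
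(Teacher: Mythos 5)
Your proof is correct and is essentially the argument the paper has in mind: the paper gives no proof of Theorem \ref{oneandtwoside}, declaring it ``well known \dots and easy to check'' (citing Hodges for part (b)), and your decomposition into the one-sided events $A$ and $B$ with the union bound for (a) and disjointness-by-monotonicity for (b) is the intended verification --- indeed the same $A$, $B$ decomposition reappears in the paper's proof of Lemma \ref{up2}. One small point you use implicitly is the existence of the points $x_0,x_1$ attaining the one-sided deviations, which holds because $F_m-G_n$ is a step function taking finitely many values (and in any case near-maximizers with deviation at least $d-\epsilon$ would suffice, since $2d>1$ is strict).
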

\subsection{Computational methods}\label{computation}
  To compute $p$-values $P(m,n;d)$ for the 2-sample test
for $d\leq 1/2$ we used the Hodges (1957) ``inside''
algorithm, for which Kim and Jennrich \cite{Kimjennrich}
gave a Fortran program and tables computed with it
for $m\leq n \leq 100$.
We further adapted the program to double precision.
The method seems to work reasonably well for $m\leq n\leq 100$;
 for $n=2m$ with $m\leq 94$ and $d=(m+1)/n$ it still gives one or 
two correct significant digits, see Table \ref{tab:pvo}.
The inside method finds $p$-values  
$\Pr(D_{m,n}\geq d)$ as $1-\Pr(D_{m,n}<d)$. When $p$-values are very small,
e.g.\ of order $10^{-15}$,
the subtraction can lead to substantial or even total loss of significant 
digits,
due to subtracting numbers very close to 1 from 1 (again see Table
\ref{tab:pvo}).

The one-sided probabilities $pv_{os}(m,n,d)$ and thus $P(m,n;d)$ for
$d>1/2$ by Theorem \ref{oneandtwoside}(b) can be computed 
by an analogous ``outside'' method with only additions and multiplications
(no subtractions), so it can compute much smaller probabilities very accurately.
The smallest probability needed for computing the results of the paper is
$\Pr(D_{300,600}\geq 1)$ which was evaluated by the outside program
as $1.147212371856\cdot 10^{-247}$, confirmed to the given number (13) of
significant digits by evaluating $2/{\binom {900}{300}}$. Moreover the ratio
of this to $2\exp(-2M^2)$ is about $3\cdot 10^{-74}$, so great accuracy
in the $p$-value is not needed to see that the ratio is small.
 For $m=n$ we can compare results of the outside method to those
found from the Gnedenko--Korolyuk formula in Proposition 
\ref{realprobability}. For $\Pr(D_{500,500}\geq 0.502)$ the outside
method needs to add a substantial number of terms. It gives
$1.87970906825\cdot 10^{-57}$ which agrees with the Gnedenko--Korolyuk
result to the given accuracy.

For large enough $m,n$ there will be an interval of values of
$d$, 
\beq\label{range}
d_0(m,n)\leq d\leq 1/2,
\eeq
in which the $p$-values are too small to compute accurately
by the inside method. We still have the possibility of
verifying the DKWM inequality in these ranges using
Theorem \ref{oneandtwoside}(a) if we can show that
\beq\label{useupbds}
pv_{ub}(m,n,d) \leq 2\exp(-2M^2)
\eeq
where as usual $M=\sqrt{mn/(m+n)}d$, and did so
computationally for $100\leq m<n\leq 200$ and $190 \leq n=2m\leq 600$
as shown by ratios less than 1 in the last columns of
Tables \ref{tab:HTH} and \ref{tab:D2} respectively.

With either  the inside or outside method, evaluation of an 
individual probability takes $O(mn)$ computational
steps, which is more (slower) than for $m=n$. For $mn$ large,
rounding errors accumulate, which especially affect the inside
method.
Moreover, to find the $p$-values for all possible values of $D_{mn}$,
in the general case that $m$ and $n$ are relatively prime, as in
a study like the present one, gives another factor of $mn$ and so
takes $O(m^2n^2)$ computational steps.

The algorithm does not require storage of $m\times n$ matrices.
Four vectors of length $n$, and various individual variables,
are stored at any one time in the computation.

For $n=2m$, the smallest possible $d>1/2$ is $d=(m+1)/n$.
Let $pvi$ and $pvo$ be the $p$-value $\Pr(D_{m,n}\geq d)$ as computed
by the inside and outside methods respectively. Let the relative
error of $pvi$ as an approximation to the more accurate $pvo$ be
$reler = \left|\dfrac{pvi}{pvo} - 1\right|$. For $n=2m$, $m=1,\dotsc,120$,
and $d = (m+1)/n$, the following $m=m_{\max}$ give larger $reler$ than
for any $m<m_{\max}$, with the given $pvo$.

\newpage

\begin{center}
{\small
\tablecaption{$p$-values for $n=2m$, $d=(m+1)/n$}
\tablehead{\bfseries  $m_{\max}$      &  $reler$     & $pvo$ \\ \hline}
\tabletail{\hline \multicolumn{3}{r}{\emph{Continued on next page}}\\}
\tablelasttail{\hline}\label{tab:pvo}
\begin{supertabular}{rll}
    10    &  $5.55\cdot 10^{-15}$ &  $0.0290$  \\
    20    &  $7.88\cdot 10^{-13}$ &  $8.94\cdot 10^{-4}$  \\
    28    &  $2.04\cdot 10^{-12}$ &  $5.48\cdot 10^{-5}$  \\
    40    &  $1.32\cdot 10^{-9}$ &  $8.29\cdot 10^{-7}$  \\
    49    &  $6.51\cdot 10^{-9}$ &  $3.58\cdot 10^{-8}$  \\
    60    &  $1.01\cdot 10^{-6}$ &  $7.66\cdot 10^{-10}$  \\
    70    &  $4.76\cdot 10^{-5}$ &  $2.32\cdot 10^{-11}$  \\
    80    &  $2.19\cdot 10^{-3}$ &  $7.07\cdot 10^{-13}$  \\
    93    &  $0.063$ &  $7.52\cdot 10^{-15}$  \\
    95    &  $0.109$ & $3.74\cdot 10^{-15}$ \\       
    98    &  $0.525$ &  $1.31\cdot 10^{-15}$  \\
    100   &  $1.045$ &  $6.52\cdot 10^{-16}$  \\
    105   &  $9.758$ &  $1.14\cdot 10^{-16}$  \\
    120   &  $2032.4$ &  $6.01\cdot 10^{-19}$  \\
    \end{supertabular}}
\end{center}

\

The small relative errors for $m\leq 10,20$, or 40, indicate
that the inside and outside programs algebraically 
confirm one another. As $m$ increases, $pvo$ becomes
smaller and $reler$ tends to increase until for $m=100$,
$pvi$ has no accurate significant digits. For $m=105$,
$pvi$ is off by an order of magnitude and for $m=120$ by
three orders. For $m=122$, $n=244$, and $d=123/244$,
for which $pvo = 2.99\cdot 10^{-19}$, $pvi$ is negative,
 $-4.44\cdot 10^{-16}$. In other words, the inside computation
gave $\Pr(D_{122,244} < 123/244) \doteq 1 + 4.44\cdot 10^{-16}$
which is useless, despite being accurate to 15 decimal places.

%

Of course, $p$-values of order $10^{-15}$ are not needed for applications of the
Kol\-mo\-go\-rov--Smirnov test even to, say, tens of thousands of 
simultaneous hypotheses as in genetics, but in this paper we are concerned with 
the theoretical issue of validity of the DKWM bound.

\subsection{Details related to Facts \ref{mneqn}, 
\ref{doubles}, and \ref{lowbdreler}}\label{Facts2-3}
Fact \ref{mneqn}(b) states that for $1\leq m<n\leq 3$ the
DKWM inequality fails. The following lists $r_{\max}(m,n)>1$
for each of the three pairs and the $d_{\max}$, equal to 1 in these
cases, for which $r_{\max}$ is attained.
\vskip .2in

\begin{minipage}[c]{3.7cm}\small{
\begin{tabbing}
 mcol \ \=\ ncol \ \=\ 1.2645561 \ \=\ \ d \kill
 $m$ \>$n$ \ \>\ \ \  \  \ $r_{\max}$ \ \> $d_{\max}$ \\
 1 \ \>2 \>1.264556 \>\ \  1 \\
 1 \ \>3 \>1.120422 \>\ \ 1 \\
 2 \ \>3 \>1.102318 \>\ \ 1 \\
\end{tabbing}}
\end{minipage}

Fact \ref{mneqn}(a) states that if $1\leq m<n\leq 200$
and $n\geq 4$, the DKWM inequality holds.
Searching through the specified $n$ for each $m$, we got the following.

For $m=1,2,$ the results of Fact \ref{mneqn}(f) as stated were found.
%

For $3\leq m\leq 199$ and $m<n\leq 200$ we searched over $n$ for
each $m$, finding $r_{\max}(m,n)$ for each $n$ and the
$n=n_{\max}$ giving the largest $r_{\max}$. Tables \ref{tab:D1} 
and \ref{tab:HTH} in Appendix B
show that all $r_{\max}<1$, completing the evidence for
Fact \ref{mneqn}(a), and were always found at
$n_{\max} = 2m$ for $m\leq 100$, as Fact \ref{mneqn}(c) states.

For Fact \ref{mneqn} (d) and (e) and Fact
\ref{doubles}, the results stated can be seen in
Tables \ref{tab:HTH} and \ref{tab:D2}.

Fact \ref{doubles}(a) in regard to relative minima of $r_{\max}$
is seen to hold in Table \ref{tab:D1}. Increasing $r_{\max}$
for $16\leq m\leq 300$ is seen in Tables \ref{tab:D1} and \ref{tab:D2}.
Fact \ref{doubles}(b) is seen in Table \ref{tab:D2}.

In Fact \ref{doubles}(c), the minimal $r_{\max}(m,2m)$ for $m\geq 101$
is at $m=101$ by part (a) with value $0.973341$ in Table
\ref{tab:D2}. The largest $r_{\max}$ in Table \ref{tab:HTH} for
$m\geq 101$ is $0.949565<0.973341$ as seen with the aid of
Fact \ref{mneqn}(d). For Fact \ref{doubles}(d), one sees that
$k_{\max}$ is nondecreasing in $m$ in Tables \ref{tab:D1} and
\ref{tab:D2}.

Regarding Fact \ref{lowbdreler},
the relative error of the DKWM bound as an approximation of a
$p$-value, namely
\beq\label{relerdkwm}
reler(dkwm,m,n,d) := \frac{2\exp(-2M^2)}{P_{m,n,M}} - 1,
\eeq
where $M$ is as in (\ref{possM}) with $d= k/L_{m,n}$, is bounded below 
for any possible $d$ by
\beq\label{dkwmrelowbd}
reler(dkwm,m,n,d) \geq \frac 1{r_{\max}(m,n)} - 1.
\eeq
 From our results, over the
given ranges, the relative error has the best chance to be small
when $n=m$ and the next-best chance when $n=2m$. On the other
hand, in Table \ref{tab:HTH} in Appendix B, where
$
rmaxx = rmaxx(m)  = \max_{m<n\leq 200}r_{\max}(m,n),
$
we have for each $m,n$ with $100 < m < n\leq 200$ and possible $d$ that
\beq\label{lowbdwrmaxx}
reler(dkwm,m,n,d) \geq \frac 1{rmaxx(m)} - 1.
\eeq
Thus Fact \ref{lowbdreler}(a) holds by Fact \ref{doubles}(c)
and the near-equality of $\beta(M)$ and $2\exp(-2M^2)$ if either
is $\leq 0.05$, as in the Remark after (\ref{brbrlim}).   
Fact \ref{lowbdreler}(b) holds similarly by inspection of
Table \ref{tab:HTH}.
\subsection{Conservative and approximate $p$-values}
Whenever the DKWM inequality holds, the DKWM bound $2\exp(-2M^2)$ 
provides simple, conservative $p$-values. 
 The  asymptotic $p$-value $\beta(M)$ given in (\ref{brbrlim})
is very close to the DKWM bound 
in case of significance level $\leq 0.05$ or less,
as noted in the Remark just after (\ref{brbrlim}).

In general, by Fact \ref{lowbdreler} for example,
 using the DKWM bound as an approximation
can give overly conservative $p$-values. We looked at
$m=20$, $n=500$. For $\alpha = 0.05$ the correct critical
value for $d=k/500$ is $k=151$ whereas the approximation
would give $k=155$; for $\alpha = 0.01$ the correct critical
value is $k=180$ but the approximation would give $k=186$.
For $180\leq k\leq 186$ the ratio of the true $p$-value to its
DKWM approximation decreases from $0.731$ down to $0.712$.
 
Stephens \cite{Stephens70} proposed that {\em in the one-sample
case}, letting $N_e:= n$ and
\beq\label{Fdef}
F := \sqrt{N_e} + 0.12 + 0.11/\sqrt{N_e},
\eeq
one can approximate $p$-values by $\Pr(D_n\geq d) \sim \beta(Fd)$
for $0<d\leq 1$, with $\beta$ from (\ref{brbrlim}). Stephens gave
evidence that the approximation works rather well. 
In the one-sample
case the distributions of the statistics $D_n$ and $K_n$ are continuous 
for fixed $n$ and vary rather smoothly with $n$.

Some other sources, e.g.\  \cite[pp.\ 617-619] {recipes},
propose in the two-sample case setting $N_e = mn/(m+n)$,
defining $F:= F_{m,n}$ by (\ref{Fdef}), and approximating
$\Pr(D_{m,n}\geq d)$ by $S_{\textrm{pli}}:= \beta(Fd)$
[``Stephens approximation plugged into'' two-sample].  
Since $F$ in (\ref{Fdef})
is always larger than
$\sqrt{N_e}$, $S_{\textrm{pli}}$ is always less than the
asymptotic probability $\beta(M)$ for $M=\sqrt{N_e}d$ which, in turn, is
always less than the DKWM approximation $2\exp(-2M^2)$.
The approximation $S_{\textrm{pli}}$ is said in at least two sources we have seen
(neither a journal article)
to be already quite good for $N_e\geq 4$. That may well be true in the
one-sample case. In the two-sample case it may be true when
$1<m\ll n$ but not when $n\sim m$. 
Table \ref{tab:approx} compares the two approximations 
$dkwm=2\exp(-2M^2)$  and $S_{\textrm{pli}}$ to critical $p$-values for some
pairs $(m,n)$.
For $m=n$, and to a lesser extent when $n=2m$, it seems that
$dkwm$ is preferable. For other pairs,
$S_{\textrm{pli}}$ is. For the six pairs $(m,n)$ with
$L_{m,n}=n$ or $2n$, $S_{\textrm{pli}} < pv.$
For the other two (relatively prime) pairs, $pv < S_{\textrm{pli}}$.
For $m=39,n=40$, $S_{\textrm{pli}}$
has rather large errors, but those of $dkwm$ are much larger.

In Table \ref{tab:approx}, $d=k/L_{m,n}$ 
and $pv$ is the 
correct $p$-value.
After each of the two approximations, $dkwm$ and $S_{\textrm{pli}}$, is 
 its relative error $reler$ as an approximation of $pv$.
\begin{center}
\small{
\tablecaption{Comparing two approximations to $p$-values}
\tablehead{\bfseries  $m$     &  $n$& $N_e$   & $k$  & $d$ & $pv$ & $dkwm$ & 
$reler$& $S_{\textrm{pli}}$ & $reler$ \\ \hline}
\tabletail{\hline \multicolumn{10}{r}{\emph{Continued on next page}}\\}
\tablelasttail{\hline} \label{tab:approx}
\begin{supertabular}{rrrrllllll}
40 & 40 &20 &12 &.3 &.05414 &.05465 &.0094 &.04313 &.2033  \\
40 & 40 &20 &13 &.325 &.02860 &.02925 &.0226 &.02216 &.2253 \\
40 & 40 &20 &14 &.35 &.014302 &.01489 &.0413 &.01079 &.2453 \\
40 & 40 &20 &15 &.375 &.006761 &.00721 &.0669 &.00498 &.2628 \\
200 & 200 &100 &27 &.135 &.05214 &.05224 &.0020 &.04745 &.0899 \\
200 & 200 &100 &28 &.14 &.03956 &.03968 &.0030 &.03578 &.0955 \\
200 & 200 &100 &32 &.16 &.011843 &.01195 &.0092 &.01044 &.1183 \\
200 & 200 &100 &33 &.165 &.008539 &.00864 &.0113 &.00748 &.1240 \\
25 & 50 &$16.67$ &16 &.32 &.06066 &.06586 &.0858 &.05129 &.1545 \\
25 & 50 &$16.67$ &17 &.34 &.03847 &.04242 &.1025 &.03198 &.1687 \\
25 & 50 &$16.67$ &19 &.38 &.014149 &.01624 &.1479 &.01141 &.1933 \\
25 & 50 &$16.67$ &20 &.4 &.008195 &.00966 &.1783 &.00653 &.2029 \\
39 & 40 &19.75 & 456 &.2923 & .05145 &.06847 &.3309 &.05476 &.0644  \\
39 & 40 &19.75 & 457 &.2929 &.04968 &.06746 &.3579 &.05390  &.0850  \\
39 & 40 &19.75 & 541 &.3468 &.010159  &.01731 &.7036 &.01264 &.2439  \\
39 & 40 &19.75 & 542 &.3474 &.009849 &.01701 &.7267 &.01240 &.2593  \\
20 & 500 &$19.23$ &150 &.3 &.05059 &.06276 &.2406 &.04973 & .0171 \\
20 & 500 &$19.23$ &151&.302 &.04817&.05992& .2439& .04733& .0175 \\
20 & 500 &$19.23$ &179&.358 &.010608&.01446& .3634& .01038& .0214\\ 
20 & 500 &$19.23$ &180 &.36 &.009998 &.01368 &.3688 &.009787 &.0211 \\
21 & 500 &$20.15$ &3074&.29276&.050052 &.06319 &.2626 &.050410 &.0072 \\
21 & 500 &$20.15$ &3076$^*$&.29295 &.049882 &.06291 &.2612 &.050170 &.0058 \\
21 & 500 &$20.15$ &3686&.35105 &.010040 &.01392 &.3869 &.010062 &.0022 \\
21 & 500 &$20.15$ &3687 &.35114 &.009979 &.01389 &.3917 &.010033 &.0054 \\
100 &500 &83.33 &73 &.146 &.0534470 &.0572963 &.07202 &.051661 &.03343   \\
100 &500 &83.33 &74 &.148 &.0483882 &.0519476 &.07356 &.0467046 &.03479 \\
100 &500 &83.33 &88 &.176 &.0104170 &.0114528 &.09943 &.0098532 &.05413 \\
100 &500 &83.33 &89 &.178 &.0092390 &.010178 &.1016 &.0087264 &.05548 \\
400 &600& 240 &104 &.08667 &.0521403 &.0543568 &.04251 &.051221 &.01763  \\
400 &600& 240 &105 &.0875 &.0486074 &.0506988 &.04303&.047719& .01827 \\
400 &600& 240 &125&.10417&.0103748& .0109416& .05463&.0100418& .03210 \\
400 &600& 240 &126 &.105 &.0095362 &.0100634 &.05528 &.0092231 &.03283  \\
 \end{supertabular} }%
\end{center}
\small{(* For $(m,n)=(21,500)$, the value $k=3075$ is not possible.)}
\newpage
The pair $(400,600)$ was included in Table \ref{tab:approx}
because, according to Fact 2(d),
the ratio $n/m = 3/2$ seemed to come next after $1/1$ and $2/1$ in producing
large $r_{\max}$, and so possibly small relative error for $dkwm$ as
an approximation to $pv$, and $r_{\max}$ was increasing in the range
computed for this
ratio, $m = 102, 104,...,132$. Still, the relative errors of $S_{\textrm{pli}}$
in Table \ref{tab:approx} are smaller than for $dkwm$.

It is a question for further research whether the usefulness of
$S_{\textrm{pli}}$, which we found for $m=20$ or 21 and $n=500$, 
extends more generally to cases where $m$ is only moderately large 
and $m \ll n$.

%
 
\subsection{Obstacles to asymptotic expansions}
This is to recall an argument of Hodges \cite{Hodges}.
Let
$$
Z^+ := Z_{m,n}^+ := \sqrt{\dfrac{mn}{m+n}}\sup_x(F_m-G_n)(x),
$$
a one-sided two-sample Smirnov statistic. There is the well-known
limit theorem that for any $z>0$, if $m,n\to\infty$ and $z_{m,n}\to z$,
then $\Pr(Z_{m,n}^+\geq z_{m,n})\to \exp(-2z^2)$. Suppose further that
$m/n\to 1$ as $n\to\infty$.  Then $\sqrt{mn/(m+n)}\sim \sqrt{n/2}$.
A question then is whether there exists a function $g(z)$ such that
\beq\label{asymptexp}
\Pr\left(Z_{m,n}^+ \geq z_{m,n}\right) = \exp(-2z^2)
\left(1+\frac{g(z)}{\sqrt{n}} + o\left(\frac 1{\sqrt{n}}\right)\right).
\eeq
Hodges \cite[pp.\ 475-476,481]{Hodges} shows that no such function
$g$ exists. Rather than a $o(1/\sqrt{n})$ error, there is an
``oscillatory'' term which is only $O(1/\sqrt{n})$.
Hodges considers $n=m+2$ (with our convention that $n\geq m$).

If $m=n$, successive possible values of $F_m-G_n$ differ by $1/n$, and
values of $Z_{m,n}^+$ (or our $M$) by $1/\sqrt{2n}$. Thus for fixed
$z$, which are of interest in finding critical values, $z_{n,n}$  can 
only converge to $z$ at a $O(1/\sqrt{n})$ rate.
It seems (to us) unreasonable then to expect (\ref{asymptexp}) to
hold. For $n=m+2$, successive possible values of $F_m-G_n$ typically
(although not always) differ by at most $4/(n(n-2))$, and possible
values of $Z_{m,n}^+$ by $O(n^{-3/2})$, so $z_{m,n}$ can converge
to $z$ at that rate. Then (\ref{asymptexp}) is more plausible 
and it is of interest that Hodges showed it fails.

Here are numerical examples for $m=n-1$, so $L_{m,n}=n(n-1)$, and for
$D_{m,n}$ rather than $Z_{m,n}^+$. We focus on critical values $k$
and $d=k/(n(n-1))$ at the $0.05$ level, having $p$-values $pv$ a little
less than $0.05$. Let $reler$ be the relative error of $dkwm$ as an
approximation to $pv$. By analogy with (\ref{asymptexp}), let us
see how $\sqrt{n}\cdot reler$ behaves.

\begin{center}
\small{
\tablecaption{Behavior of the relative error of $dkwm$ for $m=n-1$}
\tablehead{\bfseries  $n$ &  $k$\ \ &\ \ \  $pv$   & $reler$ 
&$\sqrt{n}\cdot reler$ &
& $n$ &$k\ \ \ $ &\ \ \  $pv$ & $reler$ &$\sqrt{n}\cdot reler$ \\ \hline} 
\tabletail{\hline \multicolumn{11}{r}{\emph{Continued on next page}}\\}
\tablelasttail{\hline} \label{tab:asympt}
\begin{supertabular}{rrlllcrrlll}
 40 &457 &.04968 &.3579 &2.264 &\   &400 &15066 &.049986 &.1379 &2.758 \\
100 &1850 &.049985 &.2395 &2.395 & &500 &21216 &.049983 &.08052 &1.800 \\
200 &5302 &.049885 &.1627 &2.301 & &600 &27889 &.049984 &.08250 &2.021 \\
300 &9771 &.049995 &.1448 &2.507 & & & & & & \\
 \end{supertabular} }%
\end{center}
\vskip 1cm
Here the numbers $\sqrt{n}\cdot reler$ also seem ``oscillatory'' rather
than tending to a constant.

Hodges' argument suggests that the approximation $S_{\textrm{pli}}$,
or any approximation implying an asymptotic expansion,
cannot improve on the $O(1/\sqrt{n})$ order of the relative
error of the simple asymptotic approximation $\beta(M)$;
it may often (but not always, e.g.\ for $m=n$) give smaller multiples 
of $1/\sqrt{n}$, but not $o(1/\sqrt{n})$.

\newpage

\appendix
\section{Details for $m=n\leq 458$}
Here we give details on $\delta_n$ as in Theorem \ref{meqn}(e),
giving data to show by how much (\ref{mt}) fails when $n\leq 457$. 

Recall that for $m=n$, we define $M = k/\sqrt{2n}$. For each $1\leq n \leq 457$, 
we define $k_{\max}$ to be the $k$ such that $1\leq k \leq n$ and 
$
\dfrac{P_{n,n,M}}{2 e^{-2M^2}}
$ is the largest. Since (\ref{mt}) fails for $n\leq 457$, when plugging in $k = k_{\max}$, we must have $$
\dfrac{P_{n,n,M}}{2 e^{-2M^2}} > 1.
$$

Define 
$$
\delta_n := \dfrac{P_{n,n,M}}{2 e^{-2M^2}}-1,
$$ 
where $M = k_{\max}/\sqrt{2n}$.
Then for any fixed $n\leq 457$ and $M > 0$,
$$
P_{n,n,M}=\text{Pr}\left(KS_{n,n} \geq M  \right)\leq 2(1+\delta_n) e^{-2M^2}.
$$
When $n$ increases, the general trend of $\delta_n$ is to decrease, but 
$\delta_n$ is not strictly decreasing, e.g.\ from $n=7$ to $n=8$
(Table \ref{tab:table2}).
For $N\leq 457$, we define $$\Delta_N = \max\{\delta_n\st N \leq n \leq 457\}.$$ Then it is clear that for all $n \geq N$ and $M>0$,
\beq
P_{n,n,M}=\text{Pr}\left(KS_{n,n} \geq M  \right)\leq 2(1+\Delta_N) e^{-2M^2}.
\eeq

In Table \ref{tab:table1} 
we list some pairs $(N, \Delta_N)$ for $1\leq N \leq 455$. The values 
of $\delta_n$ and $\Delta_N$ were originally output
 by Mathematica rounded to 5 decimal places. 
We added $.00001$ to the rounded numbers to assure getting upper bounds.

\begin{center}
\small{
\tablecaption{Selected Pairs $(N, \Delta_N)$}
\tablehead{\bfseries  $N$     &  $\Delta_N$&   & $N$  &  $\Delta_N$ &  &  $N$ & $\Delta_N$ \\ \hline}
\tabletail{\hline \multicolumn{8}{r}{\emph{Continued on next page}}\\}
\tablelasttail{\hline} \label{tab:table1}
\begin{supertabular}{cccccccc}

   1     &  0.35915  &       & 75    & 0.00276  &      & 215   & 0.00045  \\
    2     & 0.23152  &       & 80    & 0.00234  &       & 225   & 0.00041  \\
    3     & 0.13811  &       & 85    & 0.00229  &       & 230   & 0.00039  \\
    4     & 0.08432  &       & 90    & 0.00203  &       & 235   & 0.00036  \\
    5     & 0.08030  &       & 95    & 0.00192  &       & 240   & 0.00034  \\
    6     & 0.06223  &       & 100   & 0.00177  &       & 250   & 0.00032  \\
    7     & 0.04287  &       & 105   & 0.00160  &       & 255   & 0.00028  \\
    9     & 0.04048  &       & 110   & 0.00155  &       & 265   & 0.00028  \\
    10    & 0.03401  &       & 115   & 0.00136  &       & 270   & 0.00026  \\
    11    & 0.02629  &       & 120   & 0.00133  &       & 275   & 0.00024  \\
    13    & 0.02603  &       & 125   & 0.00124  &       & 285   & 0.00023  \\
    14    & 0.02376  &       & 130   & 0.00112  &       & 290   & 0.00020  \\
    15    & 0.02065  &       & 135   & 0.00111  &       & 305   & 0.00018  \\
    16    & 0.01773  &       & 140   & 0.00101  &       & 310   & 0.00016  \\
    18    & 0.01755  &       & 145   & 0.00095  &       & 325   & 0.00015  \\
    20    & 0.01511  &       & 150   & 0.00092  &       & 330   & 0.00013  \\
    24    & 0.01237  &       & 155   & 0.00083  &       & 345   & 0.00012  \\
    28    & 0.00923  &       & 160   & 0.00080  &       & 350   & 0.00011  \\
    32    & 0.00865  &       & 165   & 0.00078  &       & 355   & 0.00010  \\
    36    & 0.00707  &       & 170   & 0.00070  &       & 365   & 0.00009  \\
    40    & 0.00645  &       & 175   & 0.00068  &       & 370   & 0.00008  \\
    44    & 0.00549  &       & 180   & 0.00066  &       & 375   & 0.00007  \\
    48    & 0.00509  &       & 185   & 0.00060  &       & 390   & 0.00006  \\
    52    & 0.00433  &       & 190   & 0.00058  &       & 395   & 0.00005  \\
    56    & 0.00415  &       & 195   & 0.00056  &       & 415   & 0.00004  \\
    60    & 0.00348  &       & 200   & 0.00052  &       & 420   & 0.00003  \\
    65    & 0.00338  &       & 205   & 0.00048  &       & 440   & 0.00002  \\
    70    & 0.00280  &       & 210   & 0.00048  &       & 455   & 0.00001\\
  %

 \end{supertabular} }%
\end{center}

\vskip0.5cm

For $451 \leq N\leq 458$, values of $\Delta_N$ which are more precise than those
Mathematica displays (it gives just 5 decimal places) are as follows.
In all these cases $k=35$. For $N=458$, $k=36$ would give a still
more negative value. Theorem \ref{meqn}(c) shows that no 
$k$ would give $\Delta_{N}>0$ for any $N\geq 458$. 
\vskip0.1cm

\begin{minipage}[c]{3.7cm}\small{
\begin{tabbing}
Ncol \ \ \  \=\ \ $-7.184\cdot 10^{-7}$  \kill
 \ $N$  \ \ \ \=\ \ \ \ \ $\Delta_N$ \\
 451 \ \>\ $5.116\cdot 10^{-6}$\\ 
 452 \ \>\ $4.707\cdot 10^{-6}$\\ 
 453 \ \>\ $4.156\cdot 10^{-6}$\\ 
 454 \ \>\ $3.462\cdot 10^{-6}$\\ 
 455 \ \>\ $2.627\cdot 10^{-6}$\\ 
 456 \ \>\ $1.649\cdot 10^{-6}$\\ 
 457 \ \>\ $5.309\cdot 10^{-7}$\\ 
 458 \ \> $-7.284\cdot 10^{-7}$\\ 
\end{tabbing}}
\end{minipage}


Recall that for $n\geq 458$, we have $\delta_n\leq 0$.
As stated in Theorem \ref{meqn}(e) we have that for $12\leq n\leq  457$, 
\beq \delta_n < -\dfrac{0.07}{ n} + \dfrac{40}{n^2} -\dfrac{400}{n^3}. \label{dnub}
\eeq
(More precisely,  (\ref{dnub}) should be read as: the Mathematica output 
$\delta_n$ plus $0.00001$ is smaller than the right hand side
 of (\ref{dnub})  when $11<n<458$.)
The formula was found by regression and experimentation. 
In Table \ref{tab:table2}, we provide the values of $\delta_n$ when $1\leq n\leq 11$.


\begin{center}
\tablecaption{$\delta_n$ for $n\leq 11$}
\tablehead{\bfseries  $n$   &  $\delta_n$\footnotemark[1]  & & $n$ & $\delta_n$\footnotemark[1] \\ \hline}
\tabletail{\hline \multicolumn{5}{r}{\emph{Continued on next page}}\\}
\tablelasttail{\hline}
\small{
\begin{supertabular}{rlcrl} 
  1   &        0.35914  & &7     &        0.04286 \\
    2     &        0.23151 & & 8     &        0.04434 \\
    3     &       0.1381 & & 9     &        0.04047 \\
    4     &        0.08431 & &  10    & 0.034 \\
    5     &        0.08029 & &　11    &       0.02628 \\  
    6     &        0.06222 & & &\\  
    \end{supertabular}\label{tab:table2}}
\footnotetext[1]{The data shown in Table \ref{tab:table2} are the Mathematica output without adding $0.00001$.}
\end{center}%

\newpage

\section{Tables for $m < n$}
First, we give Table \ref{tab:D1} for $3\leq m\leq 99$ and $m<n\leq 200$,
showing the $n$ for which the largest $r_{\max}$ is attained, which
is always $n=2m$, the $d_{\max}=k_{\max}/n$ at which $r_{\max}$ is
attained, and ``pvatmax,'' the $p$-value in the numerator of
$r_{\max}$.
In this range, the bound (\ref{useupbds}) was used 
($d_0(m,n)\leq 1/2$ is defined) only for
$95\leq m\leq 99$, to avoid probabilities less than $10^{-14}$ from
the inside method. The given $r_{\max}$ are confirmed.
Details are in Table \ref{tab:D2}, first 5 rows, last
2 columns.



\begin{center}
\tablecaption{$3\leq m\leq 99$, $m<n \leq 200$.}
\tablehead{\bfseries  $m$     & $n$     & $r_{\max}$  & $k_{\max}$  &  pvatmax & $d_{\max}$   \\ \hline}
\tabletail{\hline \multicolumn{6}{r}{\emph{Continued on next page}}\\}
\tablelasttail{\hline}
{\small
\begin{supertabular}{rrlrll}\small
    3     & 6     & 0.986116 & 4     & 0.333333 & 0.666667 \\
    4     & 8     & 0.973325 & 4     & 0.513131 & 0.5   \\
    5     & 10    & 0.951143 & 4     & 0.654679 & 0.4   \\
    6     & 12    & 0.938437 & 5     & 0.468003 & 0.416667 \\
    7     & 14    & 0.947585 & 6     & 0.341305 & 0.428571 \\
    8     & 16    & 0.950533 & 6     & 0.424185 & 0.375 \\
    9     & 18    & 0.949182 & 6     & 0.500403 & 0.333333 \\
    10    & 20    & 0.944748 & 6     & 0.569105 & 0.3   \\
    11    & 22    & 0.946271 & 7     & 0.42873 & 0.318182 \\
    12    & 24    & 0.946955 & 8     & 0.320096 & 0.333333 \\
    13    & 26    & 0.949675 & 8     & 0.368058 & 0.307692 \\
    14    & 28    & 0.950815 & 8     & 0.414328 & 0.285714 \\
    15    & 30    & 0.950668 & 8     & 0.458559 & 0.266667 \\
    16    & 32    & 0.950333 & 9     & 0.351588 & 0.28125 \\
    17    & 34    & 0.951642 & 9     & 0.388814 & 0.264706 \\
    18    & 36    & 0.952087 & 9     & 0.424878 & 0.25  \\
    19    & 38    & 0.9527 & 10    & 0.32966 & 0.263158 \\
    20    & 40    & 0.953956 & 10    & 0.360358 & 0.25  \\
    21    & 42    & 0.954631 & 10    & 0.390399 & 0.238095 \\
    22    & 44    & 0.954788 & 10    & 0.419677 & 0.227273 \\
    23    & 46    & 0.95505 & 11    & 0.330725 & 0.23913 \\
    24    & 48    & 0.955966 & 11    & 0.356137 & 0.229167 \\
    25    & 50    & 0.956499 & 11    & 0.381112 & 0.22  \\
    26    & 52    & 0.956683 & 11    & 0.405588 & 0.211538 \\
    27    & 54    & 0.957278 & 12    & 0.323585 & 0.222222 \\
    28    & 56    & 0.958022 & 12    & 0.345065 & 0.214286 \\
    29    & 58    & 0.958501 & 12    & 0.366261 & 0.206897 \\
    30    & 60    & 0.958735 & 12    & 0.387131 & 0.2   \\
    31    & 62    & 0.958918 & 13    & 0.311609 & 0.209677 \\
    32    & 64    & 0.959602 & 13    & 0.330051 & 0.203125 \\
    33    & 66    & 0.960091 & 13    & 0.348314 & 0.19697 \\
    34    & 68    & 0.960399 & 13    & 0.366366 & 0.191176 \\
    35    & 70    & 0.960536 & 13    & 0.384182 & 0.185714 \\
    36    & 72    & 0.961028 & 14    & 0.313042 & 0.194444 \\
    37    & 74    & 0.961533 & 14    & 0.328951 & 0.189189 \\
    38    & 76    & 0.9619 & 14    & 0.344729 & 0.184211 \\
    39    & 78    & 0.962136 & 14    & 0.360355 & 0.179487 \\
    40    & 80    & 0.962249 & 14    & 0.375811 & 0.175 \\
    41    & 82    & 0.962708 & 15    & 0.309089 & 0.182927 \\
    42    & 84    & 0.963123 & 15    & 0.322988 & 0.178571 \\
    43    & 86    & 0.963437 & 15    & 0.336793 & 0.174419 \\
    44    & 88    & 0.963654 & 15    & 0.350491 & 0.170455 \\
    45    & 90    & 0.963776 & 15    & 0.364068 & 0.166667 \\
    46    & 92    & 0.964152 & 16    & 0.301667 & 0.173913 \\
    47    & 94    & 0.964521 & 16    & 0.313932 & 0.170213 \\
    48    & 96    & 0.964812 & 16    & 0.326132 & 0.166667 \\
    49    & 98    & 0.965027 & 16    & 0.338257 & 0.163265 \\
    50    & 100   & 0.965171 & 16    & 0.350299 & 0.16  \\
    51    & 102   & 0.965387 & 17    & 0.29201 & 0.166667 \\
    52    & 104   & 0.965731 & 17    & 0.30292 & 0.163462 \\
    53    & 106   & 0.966015 & 17    & 0.313788 & 0.160377 \\
    54    & 108   & 0.966239 & 17    & 0.324605 & 0.157407 \\
    55    & 110   & 0.966407 & 17    & 0.335364 & 0.154545 \\
    56    & 112   & 0.966519 & 17    & 0.346059 & 0.151786 \\
    57    & 114   & 0.966794 & 18    & 0.29073 & 0.157895 \\
    58    & 116   & 0.967076 & 18    & 0.300472 & 0.155172 \\
    59    & 118   & 0.967311 & 18    & 0.310182 & 0.152542 \\
    60    & 120   & 0.9675 & 18    & 0.319853 & 0.15  \\
    61    & 122   & 0.967645 & 18    & 0.329482 & 0.147541 \\
    62    & 124   & 0.967746 & 18    & 0.339061 & 0.145161 \\
    63    & 126   & 0.968 & 19    & 0.286669 & 0.150794 \\
    64    & 128   & 0.968245 & 19    & 0.295428 & 0.148438 \\
    65    & 130   & 0.968453 & 19    & 0.304163 & 0.146154 \\
    66    & 132   & 0.968624 & 19    & 0.312871 & 0.143939 \\
    67    & 134   & 0.96876 & 19    & 0.321547 & 0.141791 \\
    68    & 136   & 0.968862 & 19    & 0.330188 & 0.139706 \\
    69    & 138   & 0.969058 & 20    & 0.280649 & 0.144928 \\
    70    & 140   & 0.96928 & 20    & 0.28857 & 0.142857 \\
    71    & 142   & 0.969473 & 20    & 0.296476 & 0.140845 \\
    72    & 144   & 0.969636 & 20    & 0.304361 & 0.138889 \\
    73    & 146   & 0.96977 & 20    & 0.312224 & 0.136986 \\
    74    & 148   & 0.969876 & 20    & 0.320062 & 0.135135 \\
    75    & 150   & 0.969993 & 21    & 0.273263 & 0.14  \\
    76    & 152   & 0.970201 & 21    & 0.280462 & 0.138158 \\
    77    & 154   & 0.970385 & 21    & 0.287651 & 0.136364 \\
    78    & 156   & 0.970544 & 21    & 0.294827 & 0.134615 \\
    79    & 158   & 0.970681 & 21    & 0.301987 & 0.132911 \\
    80    & 160   & 0.970794 & 21    & 0.30913 & 0.13125 \\
    81    & 162   & 0.970884 & 21    & 0.316252 & 0.12963 \\
    82    & 164   & 0.971022 & 22    & 0.271515 & 0.134146 \\
    83    & 166   & 0.971201 & 22    & 0.278079 & 0.13253 \\
    84    & 168   & 0.97136 & 22    & 0.284636 & 0.130952 \\
    85    & 170   & 0.9715 & 22    & 0.291182 & 0.129412 \\
    86    & 172   & 0.97162 & 22    & 0.297717 & 0.127907 \\
    87    & 174   & 0.971721 & 22    & 0.304238 & 0.126437 \\
    88    & 176   & 0.971804 & 22    & 0.310744 & 0.125 \\
    89    & 178   & 0.971931 & 23    & 0.268046 & 0.129213 \\
    90    & 180   & 0.972091 & 23    & 0.274057 & 0.127778 \\
    91    & 182   & 0.972234 & 23    & 0.280063 & 0.126374 \\
    92    & 184   & 0.972361 & 23    & 0.286062 & 0.125 \\
    93    & 186   & 0.972472 & 23    & 0.292052 & 0.123656 \\
    94    & 188   & 0.972567 & 23    & 0.298032 & 0.12234 \\
    95    & 190   & 0.972647 & 23    & 0.304 & 0.121053 \\
    96    & 192   & 0.972743 & 24    & 0.263293 & 0.125 \\
    97    & 194   & 0.97289 & 24    & 0.268818 & 0.123711 \\
    98    & 196   & 0.973022 & 24    & 0.274341 & 0.122449 \\
    99    & 198   & 0.973142 & 24    & 0.279858 & 0.121212 \\
    \end{supertabular}
  \label{tab:D1}}%
\end{center}
\

Next, for each $m$ with $100\leq m\leq 199$ we searched by computer
among all $n=m+1,\dotsc,200$. For each such $n$, $r_{\max}(m,n)$ was
found, and then for given $m$, the largest such $r_{\max}$, called
$rmaxx$ in Table \ref{tab:HTH}, attained at $n=n_{\max}$ and for that
$n$, at $d=$ dmaxx $= k_{\max}/L_{m,n_{\max}}$ (recall that
$L_{m,n}$ is the least common multiple of $m$ and $n$), 
and with a $p$-value ``pvatmax''
in the numerator of $rmaxx$. There are columns in Table \ref{tab:HTH}
for each of these.

For each $m<n\leq 200$ and each possible value $d$ of $D_{m,n}$ in the
range (\ref{range}) where the $p$-value by the inside method was
found to be less than $10^{-14}$ and so would have too few reliable 
significant digits, we evaluated instead the upper bound
$pv_{ub}(m,n,d)$ as in Theorem \ref{oneandtwoside}(a) and took
the ratio 
\beq\label{rub}
r_{ub}(m,n,d) = pv_{ub}(m,n,d)/\left(2\exp(-2M^2)\right)
\eeq
where as usual $M = \sqrt{mn/(m+n)}d$. We took the maximum of these
for the possible values of $d$ and the ratio of that maximum to
$r_{\max}(m,n)$ as evaluated for all other possible values of $d$.
Then we took in turn the maximum of all such ratios for fixed $m$
over $n$ with $m<n\leq 200$, giving $mrmr$ (``maximum ratio of
maximum ratios'') in the last column of Table \ref{tab:HTH}.
As all these are less than 1 (the largest, for $m= 196$, is less
than $0.415$), we confirm that $r_{\max}(m,n)$ is not attained in
the range (\ref{range}) for $100\leq m<n\leq 200$ and so
the given values of $n_{\max}$ and $rmaxx$ are confirmed.

For given $m$, $mrmr$ often, but not always, occurs when
$n=n_{\max}$. For example, it does when $m=132$ and for
$195\leq m\leq 199$, but not for $m=168$, for which
$n_{\max} = 196$ but $mrmr$ occurs for $n=169$.

In Tables \ref{tab:D1} and \ref{tab:D2} the ratio $n/m$ is always 2,
in Table \ref{tab:D1} and for $m=100$ because $n_{\max}=2m$ from the 
computer search, and in Table \ref{tab:D2} by our choice. In the range
$101\leq m < n\leq 200$, $n_{\max}/m=2$ is not possible, but
$3/2$ is and occurs as described in Fact \ref{mneqn}(d). 
For example, when
$m=175$, $n_{\max}=176$, even though $n=200$ would have given a simpler
ratio $n/m=8/7$; but $r_{\max}(175,200)= 0.927656 < 0.928771 = r_{\max}(175,176)$.
Ratios occur of $n_{\max}/m = 9/7 = 198/154$, $10/7 = 190/133$, and
$11/7 = 187/119$.

\newpage

%
\begin{center}
\tablecaption{$100\leq m < n\leq 200$}
\tablehead{\bfseries  $m$     &  $n_{\max}$   &   $rmaxx$  &  $k_{\max}$ & pvatmax  &  dmaxx & $d_0(m,200)$ & $mrmr$  \\ \hline}
\tabletail{\hline \multicolumn{8}{c}{\emph{Continued on next page}}\\}
\tablelasttail{\hline}
\small{
    \begin{supertabular}{rrlrllll}
    100   & 200   & 0.973248 & 24    & 0.28537 & 0.12  & 0.49  & 0.238509 \\
    101   & 200   & 0.913382 & 2134  & 0.408438 & 0.105644 & 0.482525 & 0.228132 \\
    102   & 153   & 0.943929 & 36    & 0.346915 & 0.117647 & 0.480784 & 0.215796 \\
    103   & 155   & 0.913333 & 1764  & 0.403162 & 0.110492 & 0.479951 & 0.211469 \\
    104   & 156   & 0.944382 & 36    & 0.358576 & 0.115385 & 0.478846 & 0.214312 \\
    105   & 175   & 0.93144 & 58    & 0.375393 & 0.110476 & 0.477143 & 0.216784 \\
    106   & 159   & 0.944769 & 37    & 0.337672 & 0.116352 & 0.475377 & 0.220575 \\
    107   & 161   & 0.914677 & 1886  & 0.391834 & 0.109479 & 0.474439 & 0.220863 \\
    108   & 162   & 0.945233 & 37    & 0.348785 & 0.114198 & 0.473148 & 0.226247 \\
    109   & 164   & 0.915258 & 1921  & 0.403431 & 0.107463 & 0.471606 & 0.226013 \\
    110   & 165   & 0.94563 & 37    & 0.35982 & 0.112121 & 0.470909 & 0.228994 \\
    111   & 185   & 0.932974 & 60    & 0.36867 & 0.108108 & 0.46973 & 0.235811 \\
    112   & 168   & 0.946023 & 38    & 0.339124 & 0.113095 & 0.468214 & 0.235084 \\
    113   & 170   & 0.916523 & 2048  & 0.391779 & 0.106611 & 0.466504 & 0.236755 \\
    114   & 171   & 0.946435 & 38    & 0.34966 & 0.111111 & 0.465702 & 0.245198 \\
    115   & 184   & 0.924245 & 96    & 0.395831 & 0.104348 & 0.464565 & 0.245341 \\
    116   & 174   & 0.946787 & 38    & 0.360125 & 0.109195 & 0.462931 & 0.246682 \\
    117   & 195   & 0.934419 & 61    & 0.381039 & 0.104274 & 0.461538 & 0.249586 \\
    118   & 177   & 0.947179 & 39    & 0.339676 & 0.110169 & 0.460593 & 0.256402 \\
    119   & 187   & 0.92098 & 134   & 0.40119 & 0.102368 & 0.459328 & 0.256227 \\
    120   & 180   & 0.947549 & 39    & 0.349682 & 0.108333 & 0.46  & 0.257563 \\
    121   & 182   & 0.918795 & 2314  & 0.369177 & 0.105077 & 0.457107 & 0.260102 \\
    122   & 183   & 0.94787 & 40    & 0.329881 & 0.10929 & 0.455984 & 0.266913 \\
    123   & 164   & 0.935287 & 53    & 0.366045 & 0.107724 & 0.454878 & 0.265827 \\
    124   & 186   & 0.948254 & 40    & 0.339454 & 0.107527 & 0.453871 & 0.267777 \\
    125   & 200   & 0.926795 & 101   & 0.385868 & 0.101 & 0.454 & 0.269952 \\
    126   & 189   & 0.94859 & 40    & 0.348975 & 0.10582 & 0.451667 & 0.276748 \\
    127   & 191   & 0.92039 & 2493  & 0.367425 & 0.102774 & 0.450827 & 0.276017 \\
    128   & 192   & 0.948905 & 41    & 0.329447 & 0.106771 & 0.449688 & 0.27736 \\
    129   & 172   & 0.936549 & 54    & 0.372684 & 0.104651 & 0.448721 & 0.279215 \\
    130   & 195   & 0.949257 & 41    & 0.338568 & 0.105128 & 0.447692 & 0.285965 \\
    131   & 197   & 0.921385 & 2571  & 0.38654 & 0.099624 & 0.446565 & 0.287611 \\
    132   & 198   & 0.949565 & 41    & 0.347641 & 0.103535 & 0.445455 & 0.294401 \\
    133   & 190   & 0.923341 & 132   & 0.395393 & 0.099248 & 0.444436 & 0.293273 \\
    134   & 135   & 0.920683 & 2045  & 0.330121 & 0.113046 & 0.443955 & 0.280389 \\
    135   & 180   & 0.937714 & 56    & 0.356856 & 0.103704 & 0.442963 & 0.274898 \\
    136   & 170   & 0.930667 & 70    & 0.375306 & 0.102941 & 0.441765 & 0.273921 \\
    137   & 138   & 0.921316 & 2091  & 0.342759 & 0.1106 & 0.440766 & 0.277844 \\
    138   & 184   & 0.93829 & 56    & 0.370191 & 0.101449 & 0.44  & 0.284898 \\
    139   & 140   & 0.921695 & 2121  & 0.351497 & 0.108993 & 0.439101 & 0.279904 \\
    140   & 175   & 0.931495 & 71    & 0.376012 & 0.101429 & 0.438571 & 0.283698 \\
    141   & 188   & 0.938842 & 57    & 0.362092 & 0.101064 & 0.437518 & 0.290272 \\
    142   & 143   & 0.922434 & 2310  & 0.291798 & 0.113759 & 0.436549 & 0.294849 \\
    143   & 144   & 0.922679 & 2326  & 0.295749 & 0.112956 & 0.436189 & 0.302968 \\
    144   & 192   & 0.939363 & 58    & 0.354142 & 0.100694 & 0.435 & 0.295917 \\
    145   & 174   & 0.92777 & 87    & 0.381501 & 0.1   & 0.433966 & 0.296235 \\
    146   & 147   & 0.92338 & 2375  & 0.307108 & 0.110661 & 0.433151 & 0.304232 \\
    147   & 196   & 0.939886 & 58    & 0.366614 & 0.098639 & 0.432347 & 0.30574 \\
    148   & 185   & 0.933056 & 73    & 0.376649 & 0.098649 & 0.431622 & 0.302085 \\
    149   & 150   & 0.924015 & 2423  & 0.318878 & 0.108412 & 0.43104 & 0.305384 \\
    150   & 200   & 0.940395 & 59    & 0.358464 & 0.098333 & 0.431667 & 0.313118 \\
    151   & 152   & 0.9244 & 2455  & 0.326689 & 0.106962 & 0.42947 & 0.320836 \\
    152   & 190   & 0.933791 & 74    & 0.376618 & 0.097368 & 0.428684 & 0.307194 \\
    153   & 154   & 0.924759 & 2488  & 0.334009 & 0.105594 & 0.427876 & 0.31393 \\
    154   & 198   & 0.926355 & 132   & 0.384897 & 0.095238 & 0.427403 & 0.321538 \\
    155   & 186   & 0.929738 & 90    & 0.381638 & 0.096774 & 0.426452 & 0.329063 \\
    156   & 195   & 0.934499 & 75    & 0.376378 & 0.096154 & 0.425769 & 0.314584 \\
    157   & 158   & 0.925501 & 2711  & 0.282122 & 0.109288 & 0.424841 & 0.322061 \\
    158   & 159   & 0.925721 & 2728  & 0.285641 & 0.10859 & 0.424557 & 0.329455 \\
    159   & 160   & 0.925934 & 2745  & 0.289158 & 0.107901 & 0.423459 & 0.328888 \\
    160   & 200   & 0.935183 & 76    & 0.375946 & 0.095 & 0.42375 & 0.339848 \\
    161   & 162   & 0.926347 & 2780  & 0.29579 & 0.106587 & 0.422174 & 0.329698 \\
    162   & 189   & 0.927765 & 108   & 0.38127 & 0.095238 & 0.421481 & 0.336907 \\
    163   & 164   & 0.92674 & 2814  & 0.302799 & 0.105267 & 0.420798 & 0.344069 \\
    164   & 165   & 0.926928 & 2831  & 0.306296 & 0.104619 & 0.420366 & 0.341805 \\
    165   & 198   & 0.931538 & 93    & 0.380526 & 0.093939 & 0.419697 & 0.337482 \\
    166   & 167   & 0.927286 & 2865  & 0.313277 & 0.103348 & 0.418855 & 0.343963 \\
    167   & 168   & 0.927455 & 2882  & 0.316759 & 0.102723 & 0.417725 & 0.350977 \\
    168   & 196   & 0.928852 & 110   & 0.381517 & 0.093537 & 0.417619 & 0.357974 \\
    169   & 170   & 0.927778 & 2917  & 0.323319 & 0.101532 & 0.416746 & 0.343789 \\
    170   & 171   & 0.927934 & 2934  & 0.326785 & 0.100929 & 0.416471 & 0.35065 \\
    171   & 172   & 0.928084 & 2951  & 0.330246 & 0.100333 & 0.415351 & 0.35752 \\
    172   & 173   & 0.928229 & 2968  & 0.333699 & 0.099745 & 0.415116 & 0.364343 \\
    173   & 174   & 0.928384 & 3160  & 0.274412 & 0.104976 & 0.414451 & 0.352725 \\
    174   & 175   & 0.92858 & 3178  & 0.277564 & 0.104368 & 0.413966 & 0.356959 \\
    175   & 176   & 0.928771 & 3196  & 0.280715 & 0.103766 & 0.413571 & 0.363665 \\
    176   & 177   & 0.928956 & 3214  & 0.283863 & 0.103172 & 0.4125 & 0.370297 \\
    177   & 178   & 0.929141 & 3233  & 0.286679 & 0.102615 & 0.41209 & 0.376845 \\
    178   & 179   & 0.929321 & 3251  & 0.289823 & 0.102034 & 0.411629 & 0.383179 \\
    179   & 180   & 0.929496 & 3269  & 0.292965 & 0.101459 & 0.410698 & 0.369441 \\
    180   & 181   & 0.929666 & 3287  & 0.296104 & 0.10089 & 0.410556 & 0.375911 \\
    181   & 182   & 0.929831 & 3305  & 0.299239 & 0.100328 & 0.409309 & 0.382325 \\
    182   & 183   & 0.929992 & 3323  & 0.302371 & 0.099772 & 0.408901 & 0.388746 \\
    183   & 184   & 0.930148 & 3341  & 0.3055 & 0.099222 & 0.408087 & 0.374912 \\
    184   & 185   & 0.930299 & 3359  & 0.308624 & 0.098678 & 0.407826 & 0.381228 \\
    185   & 186   & 0.930446 & 3378  & 0.311415 & 0.098169 & 0.407027 & 0.387516 \\
    186   & 187   & 0.930591 & 3396  & 0.314533 & 0.097637 & 0.40672 & 0.393782 \\
    187   & 188   & 0.930732 & 3414  & 0.317646 & 0.09711 & 0.406043 & 0.387575 \\
    188   & 189   & 0.930867 & 3432  & 0.320755 & 0.096589 & 0.405426 & 0.386262 \\
    189   & 190   & 0.930999 & 3450  & 0.323859 & 0.096074 & 0.404894 & 0.39243 \\
    190   & 191   & 0.931125 & 3468  & 0.326959 & 0.095564 & 0.404474 & 0.398548 \\
    191   & 192   & 0.931267 & 3679  & 0.271066 & 0.100322 & 0.403953 & 0.404579 \\
    192   & 193   & 0.931438 & 3699  & 0.27362 & 0.099822 & 0.403542 & 0.392781 \\
    193   & 194   & 0.931607 & 3718  & 0.276457 & 0.0993 & 0.402409 & 0.397034 \\
    194   & 195   & 0.931772 & 3737  & 0.279293 & 0.098784 & 0.402165 & 0.402986 \\
    195   & 196   & 0.931932 & 3756  & 0.282127 & 0.098273 & 0.402051 & 0.408865 \\
    196   & 197   & 0.932089 & 3775  & 0.284959 & 0.097768 & 0.400408 & 0.414765 \\
    197   & 198   & 0.932242 & 3794  & 0.287789 & 0.097267 & 0.400533 & 0.401356 \\
    198   & 199   & 0.932391 & 3813  & 0.290616 & 0.096772 & 0.401162 & 0.407172 \\
    199   & 200   & 0.932536 & 3832  & 0.293442 & 0.096281 & 0.398719 & 0.412943 \\
 \end{supertabular} }%
\label{tab:HTH}
\end{center}
\

The following Table \ref{tab:D2} treats $95\leq m\leq 300$ and $n=2m$. In each such
case, $r_{\max}(m,n)$ was computed. It has a numerator $p$-value
``pvatmax'' attained at $d_{\max} = k_{\max}/n$.

Throughout the table, $r_{\max}$ continues to increase, as it does in
Table \ref{tab:D1} for $m\geq 16$, and as stated in Fact \ref{doubles}(a).

In the last column, $rbd_{\max}$ is the maximum of $r_{ub}(m,2m,d)$ as defined in (\ref{rub}) for $d$ in the range 
(\ref{range}). These $rbd_{\max}$ tend
to increase with $m$, although not monotonically. All values shown are
less than $0.65$, which is less than $r_{\max}$ for all the values
of $m$ shown. This confirms the values of $r_{\max}$.
\begin{center}
{\small
\tablecaption{$95\leq m\leq 300$, $n=2m$}\label{tab:D2}
\tablehead{\bfseries  $m$     &  $n$   &   $r_{\max}$  &  $k_{\max}$ & pvatmax  &  $d_{\max}$ & $d_0(m,2m)$ & $rbd_{\max}$  \\ \hline}
\tabletail{\hline \multicolumn{8}{r}{\emph{Continued on next page}}\\}
\tablelasttail{\hline}
\begin{supertabular}{rrlrllll}
%
  95    & 190   & 0.972647 & 23    & 0.304 & 0.121053 & 0.5   & 0.221227 \\
    96    & 192   & 0.972743 & 24    & 0.263293 & 0.125 & 0.5   & 0.217684 \\
    97    & 194   & 0.97289 & 24    & 0.268818 & 0.123711 & 0.494845 & 0.22868 \\
    98    & 196   & 0.973022 & 24    & 0.274341 & 0.122449 & 0.494898 & 0.225026 \\
    99    & 198   & 0.973142 & 24    & 0.279858 & 0.121212 & 0.489899 & 0.235886 \\
    100   & 200   & 0.973248 & 24    & 0.28537 & 0.12  & 0.49  & 0.232128 \\
    101   & 202   & 0.973341 & 24    & 0.290874 & 0.118812 & 0.485149 & 0.242848 \\
    102   & 204   & 0.973421 & 24    & 0.296371 & 0.117647 & 0.485294 & 0.238995 \\
    103   & 206   & 0.973488 & 24    & 0.301857 & 0.116505 & 0.480583 & 0.249572 \\
    104   & 208   & 0.973611 & 25    & 0.262685 & 0.120192 & 0.480769 & 0.245632 \\
    105   & 210   & 0.973737 & 25    & 0.267779 & 0.119048 & 0.47619 & 0.256064 \\
    106   & 212   & 0.973852 & 25    & 0.27287 & 0.117925 & 0.476415 & 0.252044 \\
    107   & 214   & 0.973955 & 25    & 0.277958 & 0.116822 & 0.471963 & 0.262329 \\
    108   & 216   & 0.974047 & 25    & 0.283042 & 0.115741 & 0.472222 & 0.258236 \\
    109   & 218   & 0.974129 & 25    & 0.28812 & 0.114679 & 0.472477 & 0.254206 \\
    110   & 220   & 0.974199 & 25    & 0.293191 & 0.113636 & 0.468182 & 0.264215 \\
    111   & 222   & 0.974264 & 26    & 0.255903 & 0.117117 & 0.463964 & 0.274206 \\
    112   & 224   & 0.974386 & 26    & 0.260616 & 0.116071 & 0.464286 & 0.269986 \\
    113   & 226   & 0.974498 & 26    & 0.265329 & 0.115044 & 0.460177 & 0.27983 \\
    114   & 228   & 0.9746 & 26    & 0.270039 & 0.114035 & 0.460526 & 0.275555 \\
    115   & 230   & 0.974692 & 26    & 0.274746 & 0.113043 & 0.456522 & 0.285254 \\
    116   & 232   & 0.974776 & 26    & 0.279451 & 0.112069 & 0.456897 & 0.28093 \\
    117   & 234   & 0.97485 & 26    & 0.284151 & 0.111111 & 0.452991 & 0.290484 \\
    118   & 236   & 0.974915 & 26    & 0.288846 & 0.110169 & 0.449153 & 0.299999 \\
    119   & 238   & 0.974975 & 27    & 0.253039 & 0.113445 & 0.44958 & 0.295527 \\
    120   & 240   & 0.975085 & 27    & 0.25741 & 0.1125 & 0.45  & 0.291118 \\
    121   & 242   & 0.975187 & 27    & 0.261782 & 0.11157 & 0.446281 & 0.300389 \\
    122   & 244   & 0.975281 & 27    & 0.266152 & 0.110656 & 0.442623 & 0.309615 \\
    123   & 246   & 0.975366 & 27    & 0.270521 & 0.109756 & 0.443089 & 0.305077 \\
    124   & 248   & 0.975444 & 27    & 0.274888 & 0.108871 & 0.439516 & 0.314162 \\
    125   & 250   & 0.975514 & 27    & 0.279251 & 0.108 & 0.44  & 0.309596 \\
    126   & 252   & 0.975576 & 27    & 0.283611 & 0.107143 & 0.436508 & 0.318543 \\
    127   & 254   & 0.97563 & 27    & 0.287967 & 0.106299 & 0.437008 & 0.313952 \\
    128   & 256   & 0.975721 & 28    & 0.253321 & 0.109375 & 0.433594 & 0.322764 \\
    129   & 258   & 0.975816 & 28    & 0.257387 & 0.108527 & 0.434109 & 0.318152 \\
    130   & 260   & 0.975904 & 28    & 0.261453 & 0.107692 & 0.430769 & 0.326832 \\
    131   & 262   & 0.975985 & 28    & 0.265518 & 0.10687 & 0.427481 & 0.335454 \\
    132   & 264   & 0.976059 & 28    & 0.269582 & 0.106061 & 0.42803 & 0.330751 \\
    133   & 266   & 0.976126 & 28    & 0.273644 & 0.105263 & 0.424812 & 0.339243 \\
    134   & 268   & 0.976187 & 28    & 0.277703 & 0.104478 & 0.425373 & 0.334527 \\
    135   & 270   & 0.976241 & 28    & 0.28176 & 0.103704 & 0.422222 & 0.342891 \\
    136   & 272   & 0.976302 & 29    & 0.24855 & 0.106618 & 0.422794 & 0.338166 \\
    137   & 274   & 0.976392 & 29    & 0.252341 & 0.105839 & 0.419708 & 0.346406 \\
    138   & 276   & 0.976476 & 29    & 0.256133 & 0.105072 & 0.416667 & 0.354584 \\
    139   & 278   & 0.976553 & 29    & 0.259924 & 0.104317 & 0.417266 & 0.34979 \\
    140   & 280   & 0.976625 & 29    & 0.263715 & 0.103571 & 0.414286 & 0.357847 \\
    141   & 282   & 0.976691 & 29    & 0.267505 & 0.102837 & 0.414894 & 0.35305 \\
    142   & 284   & 0.976752 & 29    & 0.271294 & 0.102113 & 0.411972 & 0.360988 \\
    143   & 286   & 0.976806 & 29    & 0.27508 & 0.101399 & 0.412587 & 0.356191 \\
    144   & 288   & 0.976855 & 29    & 0.278865 & 0.100694 & 0.409722 & 0.364013 \\
    145   & 290   & 0.976921 & 30    & 0.246802 & 0.103448 & 0.406897 & 0.371771 \\
    146   & 292   & 0.977002 & 30    & 0.250345 & 0.10274 & 0.407534 & 0.366924 \\
    147   & 294   & 0.977077 & 30    & 0.253889 & 0.102041 & 0.404762 & 0.37457 \\
    148   & 296   & 0.977148 & 30    & 0.257433 & 0.101351 & 0.405405 & 0.369728 \\
    149   & 298   & 0.977213 & 30    & 0.260976 & 0.100671 & 0.402685 & 0.377264 \\
    150   & 300   & 0.977274 & 30    & 0.264519 & 0.1   & 0.4   & 0.384736 \\
    151   & 302   & 0.97733 & 30    & 0.268061 & 0.099338 & 0.400662 & 0.379856 \\
    152   & 304   & 0.97738 & 30    & 0.271602 & 0.098684 & 0.401316 & 0.375025 \\
    153   & 306   & 0.977426 & 30    & 0.275142 & 0.098039 & 0.398693 & 0.382351 \\
    154   & 308   & 0.977485 & 31    & 0.244214 & 0.100649 & 0.396104 & 0.389613 \\
    155   & 310   & 0.97756 & 31    & 0.247532 & 0.1   & 0.396774 & 0.384751 \\
    156   & 312   & 0.97763 & 31    & 0.250851 & 0.099359 & 0.394231 & 0.391913 \\
    157   & 314   & 0.977695 & 31    & 0.254171 & 0.098726 & 0.39172 & 0.399011 \\
    158   & 316   & 0.977756 & 31    & 0.25749 & 0.098101 & 0.392405 & 0.394124 \\
    159   & 318   & 0.977813 & 31    & 0.26081 & 0.097484 & 0.389937 & 0.401125 \\
    160   & 320   & 0.977865 & 31    & 0.264129 & 0.096875 & 0.390625 & 0.396251 \\
    161   & 322   & 0.977914 & 31    & 0.267447 & 0.096273 & 0.388199 & 0.403157 \\
    162   & 324   & 0.977958 & 31    & 0.270764 & 0.095679 & 0.385802 & 0.41 \\
    163   & 326   & 0.978004 & 32    & 0.240951 & 0.09816 & 0.386503 & 0.40511 \\
    164   & 328   & 0.978074 & 32    & 0.244064 & 0.097561 & 0.384146 & 0.411862 \\
    165   & 330   & 0.978139 & 32    & 0.247179 & 0.09697 & 0.384848 & 0.406986 \\
    166   & 332   & 0.978201 & 32    & 0.250294 & 0.096386 & 0.38253 & 0.413649 \\
    167   & 334   & 0.978259 & 32    & 0.25341 & 0.095808 & 0.38024 & 0.42025 \\
    168   & 336   & 0.978313 & 32    & 0.256526 & 0.095238 & 0.380952 & 0.415365 \\
    169   & 338   & 0.978364 & 32    & 0.259642 & 0.094675 & 0.378698 & 0.421881 \\
    170   & 340   & 0.97841 & 32    & 0.262758 & 0.094118 & 0.376471 & 0.428335 \\
    171   & 342   & 0.978453 & 32    & 0.265873 & 0.093567 & 0.377193 & 0.423445 \\
    172   & 344   & 0.978492 & 32    & 0.268987 & 0.093023 & 0.375 & 0.429816 \\
    173   & 346   & 0.978549 & 33    & 0.240075 & 0.095376 & 0.375723 & 0.424944 \\
    174   & 348   & 0.978611 & 33    & 0.243003 & 0.094828 & 0.373563 & 0.431236 \\
    175   & 350   & 0.97867 & 33    & 0.245932 & 0.094286 & 0.371429 & 0.437467 \\
    176   & 352   & 0.978726 & 33    & 0.248862 & 0.09375 & 0.372159 & 0.432595 \\
    177   & 354   & 0.978778 & 33    & 0.251792 & 0.09322 & 0.370056 & 0.43875 \\
    178   & 356   & 0.978827 & 33    & 0.254723 & 0.092697 & 0.367978 & 0.444844 \\
    179   & 358   & 0.978873 & 33    & 0.257654 & 0.092179 & 0.368715 & 0.439976 \\
    180   & 360   & 0.978915 & 33    & 0.260584 & 0.091667 & 0.366667 & 0.445997 \\
    181   & 362   & 0.978955 & 33    & 0.263514 & 0.09116 & 0.367403 & 0.441149 \\
    182   & 364   & 0.978991 & 33    & 0.266444 & 0.090659 & 0.365385 & 0.447097 \\
    183   & 366   & 0.979048 & 34    & 0.238431 & 0.092896 & 0.363388 & 0.452987 \\
    184   & 368   & 0.979105 & 34    & 0.24119 & 0.092391 & 0.36413 & 0.448147 \\
    185   & 370   & 0.979159 & 34    & 0.243949 & 0.091892 & 0.362162 & 0.453968 \\
    186   & 372   & 0.979211 & 34    & 0.246709 & 0.091398 & 0.362903 & 0.449149 \\
    187   & 374   & 0.979259 & 34    & 0.24947 & 0.090909 & 0.360963 & 0.454903 \\
    188   & 376   & 0.979304 & 34    & 0.252231 & 0.090426 & 0.361702 & 0.450104 \\
    189   & 378   & 0.979347 & 34    & 0.254992 & 0.089947 & 0.357143 & 0.466241 \\
    190   & 380   & 0.979386 & 34    & 0.257753 & 0.089474 & 0.357895 & 0.461424 \\
    191   & 382   & 0.979423 & 34    & 0.260515 & 0.089005 & 0.34555 & 0.508269 \\
    192   & 384   & 0.979457 & 34    & 0.263276 & 0.088542 & 0.34375 & 0.513568 \\
    193   & 386   & 0.97951 & 35    & 0.236154 & 0.090674 & 0.341969 & 0.518807 \\
    194   & 388   & 0.979563 & 35    & 0.238756 & 0.090206 & 0.342784 & 0.513896 \\
    195   & 390   & 0.979613 & 35    & 0.24136 & 0.089744 & 0.341026 & 0.519079 \\
    196   & 392   & 0.979661 & 35    & 0.243964 & 0.089286 & 0.339286 & 0.524203 \\
    197   & 394   & 0.979706 & 35    & 0.246569 & 0.088832 & 0.340102 & 0.51932 \\
    198   & 396   & 0.979749 & 35    & 0.249175 & 0.088384 & 0.338384 & 0.524391 \\
    199   & 398   & 0.979789 & 35    & 0.251781 & 0.08794 & 0.336683 & 0.529404 \\
    200   & 400   & 0.979827 & 35    & 0.254387 & 0.0875 & 0.3375 & 0.52455 \\
    201   & 402   & 0.979862 & 35    & 0.256993 & 0.087065 & 0.335821 & 0.529512 \\
    202   & 404   & 0.979894 & 35    & 0.259599 & 0.086634 & 0.334158 & 0.534418 \\
    203   & 406   & 0.979938 & 36    & 0.233354 & 0.08867 & 0.334975 & 0.529595 \\
    204   & 408   & 0.979988 & 36    & 0.235813 & 0.088235 & 0.333333 & 0.534452 \\
    205   & 410   & 0.980036 & 36    & 0.238273 & 0.087805 & 0.331707 & 0.539254 \\
    206   & 412   & 0.980081 & 36    & 0.240735 & 0.087379 & 0.332524 & 0.534462 \\
    207   & 414   & 0.980124 & 36    & 0.243196 & 0.086957 & 0.330918 & 0.539217 \\
    208   & 416   & 0.980165 & 36    & 0.245659 & 0.086538 & 0.329327 & 0.543919 \\
    209   & 418   & 0.980203 & 36    & 0.248122 & 0.086124 & 0.330144 & 0.539158 \\
    210   & 420   & 0.980239 & 36    & 0.250586 & 0.085714 & 0.328571 & 0.543815 \\
    211   & 422   & 0.980273 & 36    & 0.25305 & 0.085308 & 0.327014 & 0.548421 \\
    212   & 424   & 0.980305 & 36    & 0.255513 & 0.084906 & 0.32783 & 0.543692 \\
    213   & 426   & 0.980337 & 37    & 0.230127 & 0.086854 & 0.326291 & 0.548254 \\
    214   & 428   & 0.980384 & 37    & 0.232454 & 0.086449 & 0.324766 & 0.552767 \\
    215   & 430   & 0.98043 & 37    & 0.234782 & 0.086047 & 0.325581 & 0.548069 \\
    216   & 432   & 0.980473 & 37    & 0.237111 & 0.085648 & 0.324074 & 0.552541 \\
    217   & 434   & 0.980514 & 37    & 0.239441 & 0.085253 & 0.322581 & 0.556963 \\
    218   & 436   & 0.980553 & 37    & 0.241771 & 0.084862 & 0.323394 & 0.552298 \\
    219   & 438   & 0.980591 & 37    & 0.244103 & 0.084475 & 0.321918 & 0.55668 \\
    220   & 440   & 0.980626 & 37    & 0.246434 & 0.084091 & 0.320455 & 0.561015 \\
    221   & 442   & 0.980659 & 37    & 0.248767 & 0.08371 & 0.321267 & 0.556383 \\
    222   & 444   & 0.980691 & 37    & 0.251099 & 0.083333 & 0.31982 & 0.56068 \\
    223   & 446   & 0.98072 & 37    & 0.253432 & 0.08296 & 0.318386 & 0.56493 \\
    224   & 448   & 0.980754 & 38    & 0.228757 & 0.084821 & 0.319196 & 0.56033 \\
    225   & 450   & 0.980798 & 38    & 0.230962 & 0.084444 & 0.317778 & 0.564545 \\
    226   & 452   & 0.98084 & 38    & 0.233169 & 0.084071 & 0.316372 & 0.568714 \\
    227   & 454   & 0.98088 & 38    & 0.235376 & 0.0837 & 0.317181 & 0.564146 \\
    228   & 456   & 0.980918 & 38    & 0.237585 & 0.083333 & 0.315789 & 0.568281 \\
    229   & 458   & 0.980954 & 38    & 0.239794 & 0.082969 & 0.31441 & 0.572371 \\
    230   & 460   & 0.980989 & 38    & 0.242004 & 0.082609 & 0.315217 & 0.567836 \\
    231   & 462   & 0.981022 & 38    & 0.244215 & 0.082251 & 0.313853 & 0.571893 \\
    232   & 464   & 0.981053 & 38    & 0.246426 & 0.081897 & 0.3125 & 0.575907 \\
    233   & 466   & 0.981083 & 38    & 0.248637 & 0.081545 & 0.311159 & 0.579877 \\
    234   & 468   & 0.981111 & 38    & 0.250849 & 0.081197 & 0.311966 & 0.575386 \\
    235   & 470   & 0.981142 & 39    & 0.226879 & 0.082979 & 0.310638 & 0.579326 \\
    236   & 472   & 0.981183 & 39    & 0.228972 & 0.082627 & 0.309322 & 0.583224 \\
    237   & 474   & 0.981222 & 39    & 0.231066 & 0.082278 & 0.310127 & 0.578766 \\
    238   & 476   & 0.98126 & 39    & 0.233162 & 0.081933 & 0.308824 & 0.582634 \\
    239   & 478   & 0.981296 & 39    & 0.235258 & 0.08159 & 0.307531 & 0.586462 \\
    240   & 480   & 0.98133 & 39    & 0.237355 & 0.08125 & 0.308333 & 0.582036 \\
    241   & 482   & 0.981363 & 39    & 0.239452 & 0.080913 & 0.307054 & 0.585835 \\
    242   & 484   & 0.981394 & 39    & 0.241551 & 0.080579 & 0.305785 & 0.589594 \\
    243   & 486   & 0.981424 & 39    & 0.24365 & 0.080247 & 0.306584 & 0.585201 \\
    244   & 488   & 0.981452 & 39    & 0.245749 & 0.079918 & 0.305328 & 0.588933 \\
    245   & 490   & 0.981478 & 39    & 0.247849 & 0.079592 & 0.304082 & 0.592626 \\
    246   & 492   & 0.981505 & 40    & 0.224576 & 0.081301 & 0.304878 & 0.588265 \\
    247   & 494   & 0.981543 & 40    & 0.226564 & 0.080972 & 0.303644 & 0.591932 \\
    248   & 496   & 0.98158 & 40    & 0.228554 & 0.080645 & 0.302419 & 0.595561 \\
    249   & 498   & 0.981616 & 40    & 0.230545 & 0.080321 & 0.301205 & 0.599153 \\
    250   & 500   & 0.98165 & 40    & 0.232537 & 0.08  & 0.302 & 0.594836 \\
    251   & 502   & 0.981683 & 40    & 0.234529 & 0.079681 & 0.300797 & 0.598403 \\
    252   & 504   & 0.981714 & 40    & 0.236523 & 0.079365 & 0.299603 & 0.601933 \\
    253   & 506   & 0.981744 & 40    & 0.238517 & 0.079051 & 0.300395 & 0.597648 \\
    254   & 508   & 0.981772 & 40    & 0.240512 & 0.07874 & 0.299213 & 0.601156 \\
    255   & 510   & 0.9818 & 40    & 0.242507 & 0.078431 & 0.298039 & 0.604627 \\
    256   & 512   & 0.981825 & 40    & 0.244503 & 0.078125 & 0.298828 & 0.600373 \\
    257   & 514   & 0.98185 & 40    & 0.246499 & 0.077821 & 0.297665 & 0.603822 \\
    258   & 516   & 0.981881 & 41    & 0.223807 & 0.079457 & 0.296512 & 0.607236 \\
    259   & 518   & 0.981916 & 41    & 0.225699 & 0.079151 & 0.297297 & 0.603012 \\
    260   & 520   & 0.98195 & 41    & 0.227593 & 0.078846 & 0.296154 & 0.606405 \\
    261   & 522   & 0.981983 & 41    & 0.229488 & 0.078544 & 0.295019 & 0.609763 \\
    262   & 524   & 0.982014 & 41    & 0.231383 & 0.078244 & 0.293893 & 0.613087 \\
    263   & 526   & 0.982045 & 41    & 0.23328 & 0.077947 & 0.294677 & 0.608909 \\
    264   & 528   & 0.982074 & 41    & 0.235177 & 0.077652 & 0.293561 & 0.612213 \\
    265   & 530   & 0.982101 & 41    & 0.237075 & 0.077358 & 0.292453 & 0.615484 \\
    266   & 532   & 0.982128 & 41    & 0.238973 & 0.077068 & 0.293233 & 0.611335 \\
    267   & 534   & 0.982153 & 41    & 0.240872 & 0.076779 & 0.292135 & 0.614587 \\
    268   & 536   & 0.982177 & 41    & 0.242772 & 0.076493 & 0.291045 & 0.617806 \\
    269   & 538   & 0.982199 & 41    & 0.244672 & 0.076208 & 0.289963 & 0.620993 \\
    270   & 540   & 0.982232 & 42    & 0.22256 & 0.077778 & 0.290741 & 0.616889 \\
    271   & 542   & 0.982265 & 42    & 0.224363 & 0.077491 & 0.289668 & 0.620058 \\
    272   & 544   & 0.982296 & 42    & 0.226167 & 0.077206 & 0.288603 & 0.623196 \\
    273   & 546   & 0.982327 & 42    & 0.227973 & 0.076923 & 0.289377 & 0.619121 \\
    274   & 548   & 0.982356 & 42    & 0.229779 & 0.076642 & 0.288321 & 0.622241 \\
    275   & 550   & 0.982385 & 42    & 0.231585 & 0.076364 & 0.287273 & 0.625331 \\
    276   & 552   & 0.982412 & 42    & 0.233393 & 0.076087 & 0.288043 & 0.621285 \\
    277   & 554   & 0.982438 & 42    & 0.235201 & 0.075812 & 0.287004 & 0.624358 \\
    278   & 556   & 0.982462 & 42    & 0.23701 & 0.07554 & 0.285971 & 0.627402 \\
    279   & 558   & 0.982486 & 42    & 0.238819 & 0.075269 & 0.284946 & 0.630415 \\
    280   & 560   & 0.982509 & 42    & 0.240629 & 0.075 & 0.285714 & 0.626412 \\
    281   & 562   & 0.98253 & 42    & 0.242439 & 0.074733 & 0.284698 & 0.62941 \\
    282   & 564   & 0.982561 & 43    & 0.220904 & 0.076241 & 0.283688 & 0.632379 \\
    283   & 566   & 0.982592 & 43    & 0.222624 & 0.075972 & 0.284452 & 0.628404 \\
    284   & 568   & 0.982621 & 43    & 0.224345 & 0.075704 & 0.283451 & 0.631358 \\
    285   & 570   & 0.98265 & 43    & 0.226066 & 0.075439 & 0.282456 & 0.634284 \\
    286   & 572   & 0.982678 & 43    & 0.227788 & 0.075175 & 0.281469 & 0.637181 \\
    287   & 574   & 0.982705 & 43    & 0.229511 & 0.074913 & 0.28223 & 0.633249 \\
    288   & 576   & 0.98273 & 43    & 0.231235 & 0.074653 & 0.28125 & 0.636132 \\
    289   & 578   & 0.982755 & 43    & 0.23296 & 0.074394 & 0.280277 & 0.638988 \\
    290   & 580   & 0.982778 & 43    & 0.234685 & 0.074138 & 0.281034 & 0.635083 \\
    291   & 582   & 0.982801 & 43    & 0.236411 & 0.073883 & 0.280069 & 0.637926 \\
    292   & 584   & 0.982823 & 43    & 0.238137 & 0.07363 & 0.27911 & 0.640741 \\
    293   & 586   & 0.982843 & 43    & 0.239864 & 0.073379 & 0.278157 & 0.64353 \\
    294   & 588   & 0.98287 & 44    & 0.218899 & 0.07483 & 0.278912 & 0.639666 \\
    295   & 590   & 0.982899 & 44    & 0.220541 & 0.074576 & 0.277966 & 0.642442 \\
    296   & 592   & 0.982927 & 44    & 0.222183 & 0.074324 & 0.277027 & 0.645192 \\
    297   & 594   & 0.982955 & 44    & 0.223826 & 0.074074 & 0.277778 & 0.641356 \\
    298   & 596   & 0.982981 & 44    & 0.22547 & 0.073826 & 0.276846 & 0.644094 \\
    299   & 598   & 0.983007 & 44    & 0.227115 & 0.073579 & 0.27592 & 0.646806 \\
    300   & 600   & 0.983031 & 44    & 0.228761 & 0.073333 & 0.275 & 0.649493 \\
 \end{supertabular} }%
\label{tab:newtable}
\end{center}

\end{document}